\newtheorem{thm}{Theorem}[section]
\newtheorem{lemma}[thm]{Lemma}
\newtheorem{cor}[thm]{Corollary}
\newtheorem{conj}[thm]{Conjecture}
\theoremstyle{definition}
\theoremstyle{remark}
\newtheorem{remark}[thm]{Remark}
\numberwithin{equation}{section}
\newcommand{\mmod}[1]{\,\,\text{mod}\,\,#1}
\def\alp{{\alpha}} 
\def\bet{{\beta}}  
\def\gam{{\gamma}} 
\def\del{{\delta}} \def\Del{{\Delta}}
\def\tet{{\theta}}  
\def\kap{{\kappa}}
\def\lam{{\lambda}} \def\Lam{{\Lambda}}
\def\sig{{\sigma}}
\def\ome{{\omega}}  
\def\eps{\varepsilon}
\def\le{\leqslant} \def\ge{\geqslant}
\def \sig{{\sigma}}
\def \bA {\mathbb A}
\def \bC {\mathbb C}
\def \bF {\mathbb F}
\def \bN {\mathbb N}
\def \bQ {\mathbb Q}
\def \bR {\mathbb R}
\def \bZ {\mathbb Z}
\def \be {\mathbf e}
\def \bt {\mathbf t}
\def \bx {\mathbf x}
\def \bw {\mathbf w}
\def \balp {\boldsymbol{\alp}}
\def \fg {\mathfrak g}
\def \fA {\mathfrak A}
\def \fB {\mathfrak B}
\def \fF {\mathfrak F}
\def \fG {\mathfrak G}
\def \fH {\mathfrak H}
\def \fL {\mathfrak L}
\def \fP {\mathfrak P}
\def \cC {\mathcal C}
\def \cD {\mathcal D}
\def \cE {\mathcal E}
\def \cF {\mathcal F}
\def \cG {\mathcal G}
\def \cH {\mathcal H}
\def \cL {\mathcal L}
\def \cN {\mathcal N}
\def \cP {\mathcal P}
\def \cY {\mathcal Y}
\def \cZ {\mathcal Z}
\def \rank {\mathrm{rank}}
\def \ord {\mathrm{ord}}
\def \deg {\mathrm{deg}}
\def \Gal {{\mathrm{Gal}}}
\def\Res{{\mathrm{Res}}}
\begin{document}
\title[Towards van der Waerden]{Towards van der Waerden's conjecture}
\subjclass[2020]{11R32 (primary); 11C08, 11D45, 11G35 (secondary)}
\keywords{Galois theory, diophantine equations}
\author{Sam Chow \and Rainer Dietmann}
\address{Mathematics Institute, Zeeman Building, University of Warwick, Coventry CV4 7AL, United Kingdom}
\email{Sam.Chow@warwick.ac.uk}
\address{Department of Mathematics, Royal Holloway, University of London\\
Egham TW20 0EX, United Kingdom}
\email{Rainer.Dietmann@rhul.ac.uk}

\begin{abstract} How often is a quintic polynomial solvable by radicals? We establish that the number of such polynomials, monic and irreducible with integer coefficients in $[-H,H]$, is $O(H^{3.91})$. More generally, we show that if $n \ge 3$ and $n \notin \{ 7, 8, 10 \}$ then there are $O(H^{n-1.017})$ monic, irreducible polynomials of degree $n$ with integer coefficients in $[-H,H]$ and Galois group not containing $A_n$. Save for the alternating group and degrees $7,8,10$, this establishes a 1936 conjecture of van der Waerden.
\end{abstract}

\maketitle

\section{Introduction}

\subsection{Counting solvable quintics}

By the Abel--Ruffini theorem, there are integer quintic polynomials that cannot be solved by radicals. But how often is a quintic polynomial solvable by radicals? 

Let $H$ be a large, positive real number. In this article, one objective is to count monic, irreducible quintic polynomials
\begin{equation} \label{fdef}
f(X) = X^5 + a X^4 + b X^3 + cX^2 + dX + e
\end{equation}
that are solvable by radicals, where $a,b,c,d,e \in \bZ \cap [-H,H]$, denoting their number by $N(H)$. It follows from a more general result \cite[Theorem 1]{Die2012} that $N(H) \ll_\eps H^{\eps + 25/6}$, and we are able to improve upon this. 
\begin{thm} \label{QuinticBound} We have
\[
N(H) \ll_\eps H^{\eps + 7/2 + 1/\sqrt 6} \le H^{3.91}.
\]
\end{thm}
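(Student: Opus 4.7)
Since the solvable transitive subgroups of $S_5$ are $C_5$, $D_5$, and the Frobenius group $F_{20}$, all embedded in $F_{20}$, the polynomial $f$ is solvable by radicals if and only if $\Gal(f) \subseteq F_{20}$. By the classical theory of resolvents this is equivalent to the sextic resolvent $R_f(Y) \in \bZ[Y]$ --- the monic degree-$6$ polynomial whose roots are the $S_5$-conjugates of a fixed $F_{20}$-invariant polynomial expression in the roots of $f$, and whose coefficients are polynomials in $a,b,c,d,e$ --- having a rational root. Monicity forces this root to be an integer $r$, and $|a|,\ldots,|e| \le H$ implies that the roots of $f$ are $O(H)$, so $|r| \ll H^{D}$ for an explicit constant $D$.

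The plan is thereby to bound the number of integer solutions $(a,b,c,d,e,r) \in \bZ^{6}$ with $|a|,\ldots,|e| \le H$ and $|r| \le H^{D}$ to the equation $R(r; a,b,c,d,e) = 0$, i.e., integer points on the hypersurface $V \subset \bA^{6}$ cut out by $R$. Since $R$ has degree $6$ in $Y$, I would invoke Heath-Brown's determinant method for integer points on absolutely irreducible hypersurfaces, whose saving for degree-$d$ input has the shape $1/\sqrt{d}$; this is the origin of the $1/\sqrt{6}$ in the final exponent. Concretely, one slices over $r$ and treats two regimes. For $|r|$ small, a dimension-growth estimate is applied to the fibre $V_{r} = \{R(r; \cdot) = 0\} \subset \bA^{5}$ and summed over the relevant integer values of $r$. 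For $|r|$ large, the term $r^{6}$ dominates $R(r; a,\ldots,e)$ and imposes rigid polynomial identities among $(a,\ldots,e)$, permitting a sharper estimate. The exponent $7/2 + 1/\sqrt{6}$ should emerge from balancing these two contributions at an optimal split-point.

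The main obstacle is establishing enough \emph{absolute irreducibility} of $R$, and of its generic fibres $V_{r}$, for a Heath-Brown/Salberger-type hypersurface bound to apply uniformly. Reducible degenerations of $R$ correspond precisely to quintics whose Galois group is strictly smaller than $F_{20}$ (namely $C_{5}$ or $D_{5}$); these form a strictly lower-dimensional subvariety of coefficient space and can be controlled either by Dietmann's earlier methods or via explicit parameterisations of cyclic and dihedral quintic number fields, and so are not expected to dominate. A secondary technical point is arranging the optimisation of the split in $|r|$ so that the two contributions balance cleanly to produce the exponent $7/2 + 1/\sqrt{6}$; the non-cubical shape of the box $[-H,H]^{5} \times [-H^{D}, H^{D}]$ means that the determinant method must be applied in an appropriately weighted form.
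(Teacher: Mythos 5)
Your identification of the sextic resolvent as the correct tool, and the bound $|r| \ll H^{D}$ on its integer root (indeed $D = 2$), are both right. However, the decomposition you propose — slicing over the resolvent root $r$ and applying a dimension-growth estimate to each fibre $V_r = \{R(r;\cdot) = 0\} \subset \bA^5$, then summing over $r$ — is too lossy to give any nontrivial bound. There are $\asymp H^2$ admissible values of $r$; Pila's theorem applied to the degree-$\approx 12$ hypersurface $V_r$ in $[-H,H]^5$ gives $O_\eps(H^{3+\eps+1/12})$ integer points per fibre, and the sum over $r$ is $O(H^{5+\eps+1/12})$, which does not even beat the trivial $H^5$. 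Your "large $|r|$" regime cannot repair this: the inequality $r^6 \ll \Del |r| \ll H^8 |r|$ forces $|r| \ll H^2$, so that regime is empty rather than furnishing a sharper count.

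The paper's proof uses a different decomposition. One fixes \emph{three of the five coefficients} $(a,b,c)$ generically and studies the affine surface $Y_{a,b,c} = \{\tet(a,b,c,d,e,y) = 0\} \subset \bA^3_{d,e,y}$ inside the lopsided box $[-H,H]^2 \times [-CH^2,CH^2]$, applying Browning's refinement of Salberger's determinant method for surfaces (Theorem \ref{BrowningThm}). The monomial $y^6$ gives $T \ge (H^2)^6 = H^{12}$, so
\[
V_3 = \exp\left(\sqrt{\frac{(\log H)(\log H)(2\log H)}{12\log H}}\right) = H^{1/\sqrt 6},
\]
and the lopsided Bombieri--Pila bound (Theorem \ref{LopsidedBP}) on the auxiliary planar curves contributes a factor $H^{1/2}$. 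Combined with the $H^3$ choices of $(a,b,c)$, this yields the exponent $3 + 1/\sqrt 6 + 1/2 = 7/2 + 1/\sqrt 6$. So the $1/\sqrt 6$ you anticipated arises from the lopsidedness of the box together with the monomial $y^6$, not from a generic $1/\sqrt{d}$ saving for degree-$d$ hypersurfaces; and it is essential that $y$ is \emph{retained} as one of the three determinant-method variables, not sliced away.

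Two further ingredients are missing from your plan. First, one must rule out the possibility that the auxiliary planar curves are rational lines on $Y_{a,b,c}$; the paper does this by equating coefficients in the would-be factorisation $(y - F(d))\bigl(y^5 + \sum_{i=0}^{4} f_i(d)y^i\bigr) = (y^3 + b_2(d) y^2 + b_4(d) y + b_6(d))^2 - \cD(d)y$ and deriving a degree contradiction from $\deg(\cD) = 5$. Second, one must control the degenerate triples $(a,b,c)$ for which $Y_{a,b,c}$ is reducible over $\overline{\bQ}$ or contains a rational line; the paper shows there are $O(H^2)$ such triples (via a Newton-polygon check that $Y_{0,0,0}$ is absolutely irreducible, plus an explicit elimination for the lines), and handles them separately with Hilbert irreducibility over $\bQ(e)$. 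Your remarks on reducibility are aimed at the wrong object: the relevant degeneracy is of the surfaces $Y_{a,b,c}$, a geometric condition in the $(a,b,c)$-coefficient space, not of the fibres $V_r$ and not a direct statement about $C_5$ or $D_5$ Galois groups.
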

\noindent We will see that $H^4$ is a significant threshold.

\bigskip

The Galois group of a separable polynomial is the automorphism group of its splitting field \cite[\S 6.3]{Cox}. An irreducible polynomial is solvable by radicals if and only if its Galois group is a solvable group. In this way, the question posed can be viewed through the lens of \emph{enumerative Galois theory}, a topic in arithmetic statistics concerned with the frequency of Galois groups of polynomials or number fields.

Enumerative Galois theory for polynomials has a long history. It follows from Hilbert's irreducibility theorem \cite{Hilbert} that the number $\cE_n(H)$ of monic, irreducible, non-$S_n$ integer polynomials with coefficients in $[-H,H]$ is $o(H^n)$ as $H \to \infty$. In 1936, van der Waerden showed that
\[
\cE_n(H) \ll_n H^{n - 1/(6(n-2) \log \log H)},
\]
and conjectured that $\cE_n(H) = o(R_n(H))$ as $H \to \infty$, where $R_n(H)$ counts reducible polynomials \cite{vdW1936}. We know that if $n \ge 3$ then
\[
R_n(H) = c_n H^{n-1} + O(H^{n-2}(\log H)^2),
\]
where $c_n$ is a positive constant, see \cite{Che1963} and \cite[Appendix B]{CD2020}. Thus, van der Waerden's conjecture can be equivalently stated as follows.

\begin{conj} [van der Waerden 1936]
For $n \ge 3$, we have
\[
\cE_n(H) = o(H^{n-1}) \qquad \text{as } H \to \infty.
\]
\end{conj}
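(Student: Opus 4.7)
The plan is to show $\cE_n(H) = o(H^{n-1})$ by decomposing according to Galois group. Let $N_G(H)$ denote the number of monic, irreducible integer polynomials of degree $n$ with Galois group $G$ and coefficients in $[-H,H]$. Since there are only finitely many transitive subgroups $G \subseteq S_n$ with $G \ne S_n$, it is enough to prove $N_G(H) = o(H^{n-1})$ for each such $G$. The general-degree extension of Theorem~\ref{QuinticBound} announced in the abstract supplies $N_G(H) \ll H^{n-1.017+\eps}$ whenever $G$ does not contain $A_n$, provided $n \ge 3$ and $n \notin \{7,8,10\}$. Two tasks remain: (i) the alternating case $G = A_n$ for every admissible $n$; and (ii) all transitive proper subgroups $G$ in the exceptional degrees $n \in \{7,8,10\}$.

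For (i), the starting observation is that $\Gal(f) \subseteq A_n$ precisely when $\disc(f)$ is a rational square, so $N_{A_n}(H)$ is bounded by the number of $\ba \in [-H,H]^n$ for which $\disc(\ba)$ is a perfect square. A one-variable large sieve (fixing $n-1$ coordinates and varying the last) yields only $O(H^{n-1/2+\eps})$, which is worse than the target $o(H^{n-1})$. The strategy would be to combine a power-saving square sieve along generic linear pencils of polynomials with a careful analysis of the singular locus of the discriminant hypersurface, where extra square values concentrate. For small $n$ one may shortcut this using Bhargava's prehomogeneous-vector-space parametrisations of rings of rank $n \le 6$, which realise square-discriminant polynomials as orbits of thin lattice families and should deliver a clean $O(H^{n-1-\delta})$ bound.

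For (ii), the excluded degrees are those where certain primitive transitive subgroups (such as $\mathrm{AGL}_1(\bF_7)$, $\mathrm{PGL}_2(\bF_7)$, and groups related to the alternating groups in degree $10$) evade the uniform resolvent exponents driving Theorem~\ref{QuinticBound}. Each such group would be treated case by case: construct a rational invariant of low degree tailored to its action, and feed the resulting Diophantine equation into the paper's machinery with the sharper exponents that become available once the group's structure is exploited. The dominant obstacle is unambiguously (i): the discriminant hypersurface is absolutely irreducible of degree $2n-2$ with no obvious fibration saving a full $H^{1/2}$, so bridging this gap appears to require input of Selmer-group or geometry-of-numbers character that lies beyond the resolvent-based methods deployed in the present paper.
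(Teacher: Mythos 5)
The statement you set out to prove is stated in the paper as a \emph{conjecture}, not a theorem: the paper does not prove van der Waerden's conjecture in full, and so there is no proof of it in the paper to compare with. What the paper establishes is the partial result Theorem~\ref{MainThm} (there are $O(H^{n-1.017})$ monic irreducible degree-$n$ polynomials with coefficients in $[-H,H]$ whose Galois group does not contain $A_n$, provided $n \ge 3$ and $n \notin \{7,8,10\}$), together with the conditional Corollary~\ref{MainCor}: van der Waerden's conjecture in such degrees would follow from $N_{A_n,n} = o(H^{n-1})$. The cubic and quartic cases are the only ones in which the conjecture is asserted unconditionally (Theorem~\ref{CubicQuartic}, drawing on \cite{CD2020}).

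Your proposal correctly reconstructs this decomposition, but the genuine gap you identify in part (i) --- the alternating case --- is precisely the gap the paper leaves open, and your sketch there is a heuristic rather than an argument. The one-variable sieve does give only $O(H^{n-1/2+\eps})$ for the square-discriminant locus, and the remedies you gesture at (a power-saving square sieve along generic pencils; prehomogeneous-vector-space parametrisations for small $n$) are not carried out. Likewise part (ii) on the exceptional degrees $7,8,10$ is a plan rather than a proof; the paper (via Theorem~\ref{EvenGroupThm}) reduces the issue there to a single remaining conjugacy class per degree (7T4, 8T47, 10T43, in the notation of \cite{BM1983}) and does not resolve these either. Two small factual corrections: the non-$A_n$ bound is a proved theorem in the paper (Theorem~\ref{MainThm}), not merely ``announced in the abstract'', and its exponent is $H^{n-1.017}$ with no $\eps$. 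In short, the proposal does not establish the conjecture, and one should not expect it to: the $A_n$ case, since then addressed in weak form by Bhargava's $\cE_n(H) \ll_n H^{n-1}$ \cite{Bhargava}, remains the open obstacle for the strong form $o(H^{n-1})$ treated here.
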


\begin{remark} Since the initial release of this manuscript, Bhargava has resolved a weak form of van der Waerden's conjecture \cite{Bhargava}, namely that $\cE_n(H) \ll_n H^{n-1}$. The present article concerns a stronger form of the conjecture, that $\cE_n(H) = o(H^{n-1})$. Van der Waerden \cite{vdW1936} wrote that ``Es scheint n\"amlich, da{\ss} die irreduziblen Polynome mit Affekt noch erheblich seltener sind als die reduziblen Polynome'', speculating that irreducible non-$S_n$ polynomials are substantially rarer than reducible polynomials.
\end{remark}

In \cite{Die2013}, the second named author showed that if $n \ge 3$ then
\[
\cE_n(H) \ll_{n,\eps} H^{\eps + n - 2 + \sqrt 2},
\]
breaking a record previously held by van der Waerden \cite{vdW1936}, Knobloch \cite{Kno1956}, Gallagher \cite{Gal1972} and Zywina \cite{Zyw2010}. Recently, we settled the cubic and quartic cases of van der Waerden's conjecture.

\begin{thm} [From \cite{CD2020}] \label{CubicQuartic} Van der Waerden's conjecture holds for $n=3$ and $n=4$. Moreover, we have
\[
H \ll \cE_3(H) \ll_\eps H^{\eps + 3/2}, \qquad H^2 (\log H)^2 \ll \cE_4(H) \ll H^{\eps +5/2 + 1/\sqrt 6} \ll H^{2.91}
\]
\end{thm}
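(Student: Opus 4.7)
My plan is to handle the cubic and quartic cases separately. In each, I establish the lower bound via an explicit family and the upper bound via Galois-group case analysis together with diophantine counting.

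\textbf{Cubic case.} The only transitive proper subgroup of $S_3$ is $A_3$, and an irreducible cubic $f = X^3 + aX^2 + bX + c$ has Galois group $A_3$ iff $\disc f$ is a perfect square. For the lower bound $H \ll \cE_3(H)$, I would use Shanks' simplest cubic fields $f_t(X) = X^3 - tX^2 - (t+3)X - 1$, which satisfy $\disc f_t = (t^2 + 3t + 9)^2$ and $f_t(\pm 1) \ne 0$; thus $f_t$ is irreducible cyclic for every $t \in \bZ$, yielding $\gg H$ examples of height at most $H$. For the upper bound, after depressing the cubic and clearing denominators, the condition $\disc f = y^2$ transforms into
\[
U^2 + 27 V^2 = 4 W^3, \qquad W = a^2 - 3b,\; V = y,\; U = 27c - 9ab + 2a^3.
\]
Factoring the left-hand side as a norm in $\bZ[\zeta_3]$, for each $W$ with $1 \le W \ll H^2$ the conic has $\ll W^\eps$ integer solutions $(U, V)$ by the divisor bound. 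The size constraint $|c| \le H$ localizes $U$ to an interval of length $O(H)$ centered at $3aW - a^3$, so for large $W$ we are counting integer points on a short arc of the conic. Combining divisor-type estimates for small $W$ with Bombieri--Pila-type bounds for lattice points on conic arcs for large $W$ yields $\cE_3(H) \ll H^{3/2 + \eps}$.

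\textbf{Quartic case.} The transitive proper subgroups of $S_4$ not containing $A_4$ are $V_4$, $C_4$, and $D_4$. For the lower bound $H^2 (\log H)^2 \ll \cE_4(H)$, I would exhibit a two-parameter family of $D_4$ quartics parameterized by a squarefree discriminant $k$ of the unique quadratic subfield of the splitting field and an element of $\bQ(\sqrt k)$; the $(\log H)^2$ factor arises from summing over $k$ together with an auxiliary squarefree parameter. For the upper bound, $f = X^4 + aX^3 + bX^2 + cX + d$ has Galois group not containing $A_4$ iff its resolvent cubic $g(Y) = Y^3 - bY^2 + (ac - 4d)Y - (a^2 d + c^2 - 4bd)$ has an integer root $r$, which yields the identity
\[
(2c - ar)^2 = (a^2 - 4b + 4r)(r^2 - 4d).
\]
Writing $(a^2 - 4b + 4r,\, r^2 - 4d) = (km^2, kn^2)$ with $k \in \bZ$ squarefree gives $2c - ar = \pm kmn$. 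This expresses $b, c, d$ as explicit functions of $(a, r, k, m, n)$, reducing the upper-bound problem to counting integer tuples $(a, r, k, m, n)$ subject to $|a|, |b|, |c|, |d| \le H$.

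\textbf{Main obstacle.} The $V_4$ and $C_4$ subcases contribute smaller-order terms, so the dominant contribution is from $D_4$. The key technical challenge is achieving the exponent $5/2 + 1/\sqrt 6$ rather than the naive bound: this is accomplished by applying Heath-Brown's determinant method to the auxiliary curves in the $(m, n)$-plane that arise for fixed $(a, r, k)$, with a careful uniform treatment of the dependence on the squarefree parameter $k$ and separate handling of the small-$k$ degenerate cases. Combining the contributions yields $\cE_4(H) \ll H^{5/2 + 1/\sqrt 6 + \eps}$, which is $o(H^3)$, thereby establishing van der Waerden's conjecture for $n = 4$.
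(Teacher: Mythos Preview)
This theorem is not proved in the present paper at all: it is simply quoted from the earlier work \cite{CD2020}, as the attribution ``[From \cite{CD2020}]'' indicates. So there is no proof here against which to compare your sketch.

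That said, your sketch has a genuine gap in the quartic case. You only treat Galois groups \emph{not containing} $A_4$, i.e.\ $D_4,C_4,V_4$, via the cubic resolvent. But $\cE_4(H)$ also counts the $A_4$ polynomials, and for those the cubic resolvent is irreducible; your argument says nothing about $N_{A_4,4}$. A separate ingredient is needed there, namely the square-discriminant condition $y^2=\Del(a,b,c,d)$, and it is in fact this case (with $\Del$ of degree $6$) that is responsible for the exponent $5/2+1/\sqrt{6}$, not the $D_4$ family. Without handling $A_4$, you cannot conclude $\cE_4(H)=o(H^3)$, let alone the stated bound.

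More broadly, the method in \cite{CD2020} --- and the method extended in the present paper to $n=5,6$ --- does not proceed via the explicit parametrisation $(a,r,k,m,n)$ you propose. Instead one fixes all but two of the coefficients, regards the resolvent equation as defining a surface in the remaining two coefficients and the resolvent root, and applies Salberger--Browning's determinant method (Theorem~\ref{BrowningThm}) to cover the integer points by $O(H^{\eps+1/\sqrt d})$ auxiliary curves, followed by Bombieri--Pila on each curve. Your cubic sketch (norm form plus divisor bound on $U^2+27V^2=4W^3$) is a different route that can be made to work, but the quartic sketch would need to be substantially reorganised, and in any event completed by an $A_4$ argument, to reach the claimed exponent.
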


\noindent S. Xiao \cite{Xiao2020} was able to prove a stronger bound in the cubic case, namely $\cE_3(H) \ll H (\log H)^2$.

\bigskip

More generally, how often does each group occur as the Galois group of a polynomial of a fixed degree $n \ge 3$? For $G \le S_n$, let us write $N_{G,n} = N_{G,n}(H)$ for the number of monic, irreducible, integer polynomials, with coefficients bounded by $H$ in absolute value, whose Galois group is conjugate to $G$. The second named author showed in \cite{Die2012} that
\begin{equation} \label{general}
N_{G,n} \ll_{n,\eps} H^{\eps + n-1 + 1/[S_n : G]}.
\end{equation}

The Galois group $G_f$ of an irreducible polynomial $f$ acts transitively on its roots \cite[Proposition 6.3.7]{Cox}, that is, it is a transitive subgroup of $S_n$. This greatly limits the number of possibilities for the conjugacy class of $G_f$. For example, in the case $n=5$ of quintic polynomials, the transitive subgroups of $S_5$ are: 
\begin{itemize}
\item $S_5$, the full symmetric group on five elements, which has order 120 and is insolvable;
\item $A_5$, the alternating group, which has order 60 and is insolvable;
\item $\mathrm{AGL}(1,\bF_5)$, the general affine group, which has order 20 and is solvable;
\item $D_5$, the dihedral group of order 10, which is solvable; and
\item $C_5$, the cyclic group of order 5, which is solvable.
\end{itemize}
As $S_5$ and $A_5$ are the only transitive subgroups of $S_5$ that are insolvable, Theorem \ref{QuinticBound} makes progress towards the quintic case of van der Waerden's conjecture. It implies that
\begin{equation} \label{SolvableGroups}
N_{\mathrm{AGL}(1,\bF_5),5} + N_{D_5,5} + N_{C_5,5} 
= N(H) \ll H^{3.91} = o(H^4)
\end{equation}
so, for the quintic case of van der Waerden's conjecture, only $A_5$ remains. This is to say that we have the following consequence of Theorem \ref{QuinticBound}.

\begin{cor} \label{QuinticSmallGalois} If $N_{A_5,5} = o(H^4)$ then van der Waerden's conjecture holds for $n=5$.
\end{cor}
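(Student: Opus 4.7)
The plan is to decompose $\cE_5(H)$ according to the conjugacy class of the Galois group and then apply Theorem \ref{QuinticBound} to absorb all terms other than the $A_5$ term. First, I would invoke the classical fact (Proposition 6.3.7 of Cox, cited earlier) that the Galois group of an irreducible polynomial of degree $n$ acts transitively on its roots, so up to conjugacy it is a transitive subgroup of $S_n$. For $n=5$ the excerpt exhibits the complete list of transitive subgroups of $S_5$, namely $S_5$, $A_5$, $\mathrm{AGL}(1,\bF_5)$, $D_5$ and $C_5$. Since $\cE_5(H)$ counts precisely the monic irreducible integer quintic polynomials with coefficients in $[-H,H]$ whose Galois group is not (conjugate to) $S_5$, partitioning by Galois group yields
\[
\cE_5(H) = N_{A_5,5}(H) + N_{\mathrm{AGL}(1,\bF_5),5}(H) + N_{D_5,5}(H) + N_{C_5,5}(H).
\]

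The second step is to control the three non-$A_5$ summands. Since $\mathrm{AGL}(1,\bF_5)$, $D_5$ and $C_5$ are each solvable, every polynomial counted by $N_{\mathrm{AGL}(1,\bF_5),5}$, $N_{D_5,5}$ or $N_{C_5,5}$ is solvable by radicals, and conversely every solvable monic irreducible quintic in the box falls into exactly one of these classes. This is precisely the identity \eqref{SolvableGroups}, which shows that the sum of the three solvable-Galois-group counting functions equals $N(H)$. Applying Theorem \ref{QuinticBound} bounds this contribution by $O(H^{3.91}) = o(H^4)$. Combined with the hypothesis $N_{A_5,5}(H) = o(H^4)$, this gives $\cE_5(H) = o(H^4)$, which is exactly the quintic case of van der Waerden's conjecture. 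There is no real obstacle here: the corollary is a direct bookkeeping consequence of Theorem \ref{QuinticBound}, the classification of transitive subgroups of $S_5$, and the equivalence between solvability by radicals and solvability of the Galois group.
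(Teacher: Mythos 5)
Your proposal is correct and follows exactly the argument the paper sketches immediately before the corollary: partitioning $\cE_5(H)$ by the five transitive subgroups of $S_5$, noting the three proper solvable ones are bounded by $N(H) \ll H^{3.91} = o(H^4)$ via Theorem \ref{QuinticBound} and the identity \eqref{SolvableGroups}, and then invoking the hypothesis on $N_{A_5,5}$. There is no substantive difference from the paper's reasoning.
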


\iffalse
The two-parameter family
\[
f_{s,t}(X) = X^5 + (s-3)X^4 + (t-s+3)X^3 
+ (s^2 - s - 1 - 2t) X^2 + tX + s,
\]
for $s,t \in \bZ$, was investigated by Kondo \cite[Theorem 6]{Kondo}, and shown to have Galois group $D_5$ over the function field $\bQ(s,t)$. In \S \ref{D5lower}, we will use this to prove the following lower bound for $D_5$ quintics.
\fi
For $s,t \in \bZ$, the de Moivre quintic
\[
X^5 + 5s X^3 + 5s^2 X + t
\]
has Galois group $\mathrm{AGL}(1,\bF_5)$ whenever it is irreducible, see \cite[Example 13.2.10]{Cox}. It now follows from Eisenstein's criterion that
\[
N_{\mathrm{AGL}(1,\bF_5), 5} \gg H^{3/2}.
\]
As $\mathrm{AGL}(1,\bF_5)$ is solvable, we thus obtain the following lower bound to complement the upper bound in Theorem \ref{QuinticBound}.

\begin{cor} 
We have
\[
N(H) \gg H^{3/2}.
\]
\end{cor}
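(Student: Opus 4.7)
The plan is to realise the lower bound through a one-parameter subfamily of the de Moivre quintics already introduced in the discussion preceding the corollary, namely
\[
f_{s,t}(X) = X^5 + 5sX^3 + 5s^2 X + t.
\]
By the cited result of \cite[Example 13.2.10]{Cox}, whenever $f_{s,t}$ is irreducible over $\bQ$ its Galois group is $\mathrm{AGL}(1,\bF_5)$, a solvable group, so every such pair $(s,t) \in \bZ^2$ contributes to $N(H)$.

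First I would identify the admissible range of $(s,t)$. Requiring all coefficients of $f_{s,t}$ to lie in $[-H,H]$ amounts to $|5s| \le H$, $|5s^2| \le H$ and $|t| \le H$; the binding condition is $|s| \le \sqrt{H/5}$, and $|t| \le H$ otherwise. This already bounds the number of candidate pairs from above by $O(H^{3/2})$, which is the reason one cannot hope for more via this construction.

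Next I would force irreducibility via Eisenstein's criterion at the prime $p = 5$. The coefficients $5s$, $0$, $5s^2$ and $0$ are automatically divisible by $5$, and the leading coefficient $1$ is coprime to $5$, so the only remaining requirements are $5 \mid t$ and $25 \nmid t$. The number of $t \in [-H,H] \cap \bZ$ satisfying both is $\ge 2H/5 - 2H/25 - O(1) \gg H$, and the number of integers $s$ with $1 \le s \le \sqrt{H/5}$ is $\gg H^{1/2}$.

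Finally, distinct pairs $(s,t)$ in this range give distinct polynomials, since $t$ is the constant term and $s$ can be recovered from the coefficient of $X^3$. Multiplying the two counts yields $\gg H^{3/2}$ monic, irreducible, solvable quintics with coefficients in $[-H,H]$, which is exactly the desired bound. There is no real obstacle: once one notes that Eisenstein at $5$ is gratis on this family, the argument reduces to an elementary counting exercise.
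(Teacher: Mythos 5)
Your proposal is correct and follows exactly the same route as the paper: the de Moivre family $X^5+5sX^3+5s^2X+t$, Eisenstein at $p=5$ to force irreducibility, and the $O(\sqrt{H})\times O(H)$ count of admissible $(s,t)$. The paper states this argument in a single sentence; you have simply supplied the details (the constraint $|s|\le\sqrt{H/5}$, the conditions $5\mid t$, $25\nmid t$, and the distinctness of the resulting polynomials), all of which are right.
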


\subsection{Sextic and higher-degree polynomials}

Save for the alternating group, we are able to prove van der Waerden's conjecture in any degree except for $7,8,10$. Here is our main theorem.

\begin{thm} [Main Theorem] \label{MainThm} 
If $n \ge 3$ and $n \notin \{7, 8, 10\}$ then there are $O(H^{n-1.017})$ monic, irreducible polynomials of degree $n$ with integer coefficients in $[-H,H]$ and Galois group not containing $A_n$.
\end{thm}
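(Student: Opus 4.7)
The plan is to bound $N_{G,n}(H) \ll H^{n-1.017}$ for every transitive subgroup $G \le S_n$ with $G \not\supseteq A_n$, then sum over the finitely many conjugacy classes of such $G$. Since \eqref{general} from \cite{Die2012} only yields $H^{n-1+\eps}$ in the best case, each type of $G$ requires a genuinely structural argument. I would split the transitive non-$A_n$-containing subgroups of $S_n$ into primitive and imprimitive families.

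For a primitive $G \not\supseteq A_n$, classical bounds (Bochert, refined by Praeger--Saxl and Mar\'oti) force $|G|$ to be very small compared to $n!$. A polynomial $f$ with Galois group $G$ then generates a number field of degree $n$ whose Galois closure has bounded degree and tightly constrained discriminant. Combining counts of such number fields with Diophantine bounds on the resolvent-vanishing variety attached to $G$ should yield a bound well below $H^{n-1}$. The exclusion of $n \in \{7,8,10\}$ enters precisely here: in these degrees $S_n$ admits an exceptional primitive non-$A_n$ subgroup (for instance $\mathrm{PSL}(2,7) \le S_7$ or $\mathrm{AGL}(3,\bF_2) \le S_8$) whose associated resolvent variety has too large a dimension and too low a degree for the determinant method to clear the $0.017$ barrier.

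For imprimitive $G$, fix a block system of $m$ blocks of size $k$ with $n=km$ and $k,m \ge 2$. Such an $f$ admits an intermediate number field $K$ of degree $m$ over $\bQ$, a degree-$m$ block-sum resolvent $R \in \bZ[X]$ with $K = \bQ[X]/(R)$, and a degree-$k$ fibre polynomial $f_1 \in \cO_K[X]$; conversely $f$ is recovered from the pair $(R, f_1)$ as the norm $\prod_\sigma \sigma(f_1)$ taken over the embeddings $\sigma : K \hookrightarrow \overline{\bQ}$. The naive parameter count for $(R, f_1)$ is $n-1$, so to get below $H^{n-1}$ one must exploit height inequalities between $f$ and $(R,f_1)$ together with the Heath-Brown--Salberger determinant method applied to the defining equations of this parametrisation, mirroring the cubic-resolvent analysis of the quartic case in \cite{CD2020}. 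The thin factorisations $k=2$ or $m=2$ are the tightest and demand a careful optimisation, while for $n$ prime the imprimitive case is vacuous and only the primitive analysis is needed.

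The main obstacle will be securing a \emph{uniform} saving of $0.017$ across every transitive non-$A_n$ subgroup for all $n \ge 3$ with $n \notin \{7,8,10\}$. The quantitative bottleneck comes from the near-full primitive groups of intermediate index, where the Praeger--Saxl input is weakest, combined with the extreme block sizes $k=2$ or $m=2$ in the imprimitive case. The constant $0.017$ should emerge as the infimum of the savings produced in these borderline cases, and demonstrating that this infimum is positive and attained uniformly in $n$ outside $\{7,8,10\}$ will constitute the heart of the proof.
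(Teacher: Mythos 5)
Your decomposition into primitive and imprimitive transitive subgroups is a genuinely different route from the paper's, and it does not match how the theorem is actually proved. The paper deduces Theorem \ref{MainThm} by aggregating four degree-specific results: Theorem \ref{CubicQuartic} for $n=3,4$, Theorem \ref{QuinticBound} for $n=5$, Theorem \ref{SexticThm} for $n=6$, and Theorem \ref{HigherThm} for $n=9$ or $n\ge 11$. For the last of these the sole group-theoretic input is a \emph{single uniform index bound}: every transitive $G \le S_n$ with $A_n \not\le G$ satisfies $d = [S_n:G] \ge 240$ when $n=9$ or $n\ge 11$ (from \cite[Lemma 3]{Die2012} plus the Butler--McKay tables). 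There is no case split by primitivity, no Bochert/Praeger--Saxl/Mar\'oti input, and no intermediate-field or block-resolvent parametrisation. The group $G$ is encoded entirely in the resolvent $\Phi_G$ of \eqref{PhiDef}, whose integer root condition turns $G_f \le G$ into a Diophantine problem; the saving then comes from the determinant method (Theorem \ref{HBthm} to cover a threefold by surfaces, Theorem \ref{PilaThm}, and a lopsided Pila variant Theorem \ref{LopsidedPila} with two-parameter function-field Galois theory to kill the linear-factor case). The number $1.017$ is simply $3/2 - 3 \cdot 240^{-1/3}$.

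Beyond being a different route, your sketch has concrete gaps. First, the imprimitive parametrisation is not counted correctly: a block system with $m$ blocks of size $k$, $km=n$, gives a degree-$m$ intermediate polynomial ($m$ free coefficients) and a monic degree-$k$ fibre polynomial over $\cO_K$ with $K$ of degree $m$ (so $km = n$ free coefficients), totalling $n+m$, not $n-1$; you would need to exhibit and exploit constraints before any saving appears, and this is exactly the kind of parameter-counting subtlety the paper's resolvent approach sidesteps. Second, your argument does not touch $n \in \{3,4,5,6\}$, which the theorem asserts and where the paper needs bespoke low-degree resolvents and computer-assisted irreducibility/no-lines checks; the index $d$ for, say, a solvable transitive $G \le S_5$ can be as small as $6$, so the higher-degree machinery fails there and no primitive/imprimitive dichotomy rescues it. Third, the exclusion of $\{7,8,10\}$ is not about the resolvent variety having ``too large a dimension'': the variety always has the same shape, but the three classes 7T4, 8T47, 10T43 have index too small for the exponent $n-3/2+3d^{-1/3}$ to dip below $n-1$, and they also escape the alternating-subgroup refinement of Theorem \ref{EvenGroupThm}. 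Finally, ``the constant $0.017$ should emerge as the infimum of the savings'' is a placeholder, not an argument: without committing to the resolvent $\Phi_G$, a specific determinant-method theorem, and the degree/lopsidedness bookkeeping, there is no mechanism in your plan that actually produces a positive, uniform saving.
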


\begin{cor} \label{MainCor}
Let $n \ge 3$ with $n \notin \{7,8,10\}$, and suppose $N_{A_n,n} = o(H^{n-1})$ as $H \to \infty$. Then van der Waerden's conjecture holds in degree $n$.
\end{cor}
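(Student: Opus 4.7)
The plan is to decompose $\cE_n(H)$ according to whether the Galois group contains $A_n$, and then apply Theorem~\ref{MainThm} to the ``does not contain'' part and the standing hypothesis to the remainder. The key group-theoretic observation is that $A_n$ is a maximal subgroup of $S_n$ (since $[S_n : A_n] = 2$, any proper overgroup of $A_n$ must equal $S_n$), so the only transitive subgroups of $S_n$ containing $A_n$ are $A_n$ and $S_n$ themselves. Since the Galois group of an irreducible degree-$n$ polynomial is a transitive subgroup of $S_n$, this yields the exact decomposition
\[
\cE_n(H) \;=\; N_{A_n,n}(H) \;+\; \sum_{\substack{G \le S_n \text{ transitive}\\ A_n \not\le G}} N_{G,n}(H).
\]

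Next I would bound the two pieces separately. The second sum is precisely the count appearing in Theorem~\ref{MainThm}, namely the number of monic, irreducible, degree-$n$ integer polynomials with coefficients in $[-H,H]$ and Galois group not containing $A_n$. For $n \ge 3$ with $n \notin \{7,8,10\}$, Theorem~\ref{MainThm} gives $O(H^{n-1.017})$, which is $o(H^{n-1})$ since $1.017 > 1$. The first summand $N_{A_n,n}(H)$ is $o(H^{n-1})$ by hypothesis. Adding the two estimates produces $\cE_n(H) = o(H^{n-1})$, which is van der Waerden's conjecture in degree $n$.

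Because all of the substance sits inside Theorem~\ref{MainThm}, there is no real obstacle in the corollary itself; it is a bookkeeping consequence. The only item requiring care is the maximality of $A_n$ in $S_n$, which guarantees that the complement taken in Theorem~\ref{MainThm} exhausts every transitive subgroup other than $A_n$ and $S_n$, so that nothing is missed in the decomposition.
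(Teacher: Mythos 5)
Your proof is correct and is exactly the bookkeeping argument the paper has in mind: the paper leaves Corollary~\ref{MainCor} as an immediate consequence of Theorem~\ref{MainThm}, and you have spelled out the decomposition of $\cE_n(H)$ into the $N_{A_n,n}$ piece (handled by hypothesis) and the ``Galois group not containing $A_n$'' piece (handled by Theorem~\ref{MainThm}), using the maximality of $A_n$ in $S_n$ to see that these pieces exhaust the non-$S_n$ transitive subgroups.
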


When $n = 9$ or $n \ge 11$, we reach Theorem \ref{MainThm} via the index of $G$.

\begin{thm} \label{HigherThm} Let $n = 9$ or $n \ge 11$, let $G$ be a transitive subgroup of $S_n$ that does not contain $A_n$, and put $d = [S_n:G]$. Then $d \ge 240$, and
\[
N_{G,n} \ll_{n,\eps} H^{n+\eps-3/2+3d^{-1/3}}
\le H^{n-1.017}.
\]
\end{thm}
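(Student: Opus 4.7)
The plan is to combine a group-theoretic lower bound on the index $d$ with a quantitative Diophantine bound that beats \eqref{general} precisely when $d$ is large. The first step is to establish that every transitive $G \le S_n$ with $G \not\supseteq A_n$ satisfies $d \ge 240$, under the hypothesis that $n = 9$ or $n \ge 11$. For $n = 9$ this is a direct inspection of the transitive groups of degree $9$: the smallest index of such a subgroup is precisely $240$, attained by the natural action of $\mathrm{P}\Gamma\mathrm{L}_2(\bF_8)$. For $n \ge 11$, one treats primitive and imprimitive $G$ separately. In the primitive case, Bochert's classical bound gives $d \ge \lfloor (n+1)/2 \rfloor! \ge 6! = 720$. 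In the imprimitive case $G$ sits inside some wreath product $S_k \wr S_{n/k}$ with $1 < k < n$ and $k \mid n$, so $d \ge n!/[(k!)^{n/k}(n/k)!]$, and a direct check verifies $d \ge 240$ for every admissible $(n,k)$ with $n \ge 11$. This classification is sharp: the excluded degrees $n \in \{7,8,10\}$ admit genuinely small-index transitive non-$A_n$-containing subgroups, for instance $\mathrm{GL}_3(\bF_2) \le S_7$ of index $30$, $\mathrm{AGL}_3(\bF_2) \le S_8$ of index $30$, and $S_5 \wr S_2 \le S_{10}$ of index $126$.

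The second, quantitative step proves $N_{G,n} \ll_{n,\eps} H^{n+\eps - 3/2 + 3 d^{-1/3}}$ via a Galois resolvent polynomial. Fix a $G$-invariant $\Phi \in \bZ[Y_1, \dots, Y_n]$ whose $S_n$-stabilizer is exactly $G$. For $f$ with roots $\alpha_1, \dots, \alpha_n$, define
\[
R_{G,f}(T) = \prod_{\sigma \in S_n/G} \bigl(T - \Phi(\alpha_{\sigma(1)}, \dots, \alpha_{\sigma(n)})\bigr) \in \bZ[T];
\]
this is a degree-$d$ integer polynomial, and when $f$ is irreducible, $G_f$ is conjugate into $G$ if and only if $R_{G,f}$ has a rational root, which is automatically an algebraic, hence rational, integer bounded by $|r| \ll H^{O_\Phi(1)}$ via the Cauchy root bound. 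Thus $N_{G,n}$ is at most the number of pairs $(\ba, r) \in \bZ^{n+1}$ with $\ba \in [-H,H]^n$, $|r| \le H^{O_\Phi(1)}$, and $R_{G,f}(r) = 0$.

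The main obstacle is to count these pairs with a saving of $1/2 - 3 d^{-1/3}$ over the trivial hypersurface estimate $H^{n-1}$; summing the naive hypersurface count over $r$ only recovers \eqref{general}. To do better I would apply the $p$-adic determinant method of Heath-Brown and Salberger to the affine hypersurface $\Sigma = \{R_{G,f}(r) = 0\} \subset \bA^{n+1}$, covering the relevant $\bZ$-points by $O_\eps(H^\eps)$ auxiliary varieties of controlled degree and then iterating on each piece. The cube-root exponent $3 d^{-1/3}$ emerges from a three-parameter optimization balancing the degree of the auxiliary cover, the size of the sub-box containing $r$, and the dimension of the strata, exploiting that $\Sigma$ has degree $d$ in $T$. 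With such a hypersurface bound in hand, the input $d \ge 240$ gives $3 d^{-1/3} \le 3/\sqrt[3]{240} < 0.484$, so $n - 3/2 + 3 d^{-1/3} + \eps \le n - 1.017$ for sufficiently small $\eps$, delivering the stated $N_{G,n} \ll H^{n - 1.017}$.
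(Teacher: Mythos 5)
Your treatment of the inequality $d \ge 240$ is correct, though it follows a different route from the paper: the paper cites \cite[Lemma 3]{Die2012} (which gives $d \ge \tfrac12\binom{n}{\lfloor n/2\rfloor} \ge 462$ for $n\ge 12$) and the Butler--McKay tables for $n\in\{9,11\}$, whereas you separate primitive groups (Bochert) from imprimitive ones (wreath-product index). Both arguments deliver the bound and your version is valid, so this part is fine.

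The quantitative half of the proposal, however, has a genuine gap, and the specific method you outline would not produce the stated exponent. You propose to apply the $p$-adic determinant method directly to the degree-$d$ hypersurface $\Sigma = \{R_{G,f}=0\}\subset\bA^{n+1}$. But Heath-Brown's covering theorem (Theorem \ref{HBthm}) for a hypersurface in $\bA^N$ comes with an exponent factor $(N-1)\bigl(\prod\log B_i/\log T\bigr)^{1/(N-1)}$. With $N=n+1$, this is $n\cdot(\ldots)^{1/n}$, and even after exploiting $T \ge B_{n+1}^d$ one only gets a covering by $O(H^{\eps+n\,d^{-1/n}})$ pieces; for $n = 9$, $d\ge 240$ this is about $H^{4.9}$, nowhere near the $H^{0.48}$ needed. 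The paper's $3d^{-1/3}$ arises precisely because it first \emph{fixes} $n-3$ of the coefficients generically, reducing to an absolutely irreducible threefold in $\bA^4_{a,b,c,y}$, and then applies Theorem \ref{HBthm} with $N=4$, so that $N-1 = 3$ and the exponent is $3d^{-1/3}$. The factor $-3/2$ in the exponent then comes from a separate input: after covering the threefold by $O(H^{\eps+3d^{-1/3}})$ surfaces, eliminating $y$ and invoking Pila's theorem on the non-linear pieces gives $O(H^{\eps+3/2})$ triples $(a,b,c)$. Your proposed ``three-parameter optimization'' on $\Sigma\subset\bA^{n+1}$ does not reproduce this structure.

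You also omit what is in fact the hardest part of the proof. After the dimensional reduction and the resultant step, the surface piece may degenerate to a rational \emph{linear} polynomial $\cF(a,b,c)$, on which the Pila bound fails and which can carry $\asymp H^2$ integer points. The paper must then substitute the linear relation to write $f(X) = G_0(X) + aG_1(X) + bG_2(X)$ and prove $\Gal(f,\bC(a,b)) = S_n$ via the Uchida--Smith--Cohen framework (Lemmas \ref{DoublyTransitive}--\ref{transposition2}), then use the separable-resolvent machinery (Lemma \ref{hdef}) and the new lopsided Pila theorem (Theorem \ref{LopsidedPila}) together with the uniform Hilbert irreducibility input (Theorem \ref{HIT}) to finish. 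None of this is addressed in the proposal, and without it the argument does not close.
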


In the sextic case, we have the following estimates.

\begin{thm} \label{SexticThm} Let $G$ be a transitive subgroup of $S_6$ that does not contain $A_6$. Then
\[
N_{G,6} \ll_\eps H^{\eps+9/2+1/\sqrt{6}} \le H^{4.91}.
\]
Moreover, if $G$ is solvable then
\[
N_{G,6} \ll_\eps H^{\eps+9/2+1/\sqrt{10}} \le H^{4.82}.
\]
\end{thm}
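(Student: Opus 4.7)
I would stratify by the conjugacy class of a maximal transitive subgroup $M \le S_6$ containing $G$. The maximal transitive subgroups of $S_6$ other than $S_6$ itself, up to conjugacy, are: $A_6$ (index $2$); two classes of transitive $S_5$ (index $6$), arising from the exceptional outer automorphism of $S_6$; the wreath product $S_3 \wr S_2$ (index $10$), preserving a block system with two blocks of size three; and $S_2 \wr S_3$ (index $15$), preserving three blocks of size two. A key preliminary observation for the solvable half of the theorem is that every primitive permutation group of degree six contains $A_5$, so every solvable transitive subgroup of $S_6$ must be imprimitive, and hence contained in one of the two wreath products above. This justifies discarding the transitive-$S_5$ container in the solvable subcase, which is what turns $1/\sqrt 6$ into $1/\sqrt{10}$.

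Next, to each maximal $M$ I would attach a resolvent polynomial whose factorisation over $\bZ$ witnesses the inclusion $G \subseteq M$. When $M = A_6$ this is the condition that $\disc f$ be a perfect square, and counting sextics with square discriminant by a square-sieve argument or by adapting the quartic treatment of \cite{CD2020} gives a bound much stronger than $H^{9/2+1/\sqrt 6}$. For the transitive-$S_5$ case, the action of $S_6$ on $S_6/M$ yields a degree-$6$ resolvent which has a rational root whenever $G \le M$; bounding the number of $f$ producing such a resolvent by the technique of \cite{Die2012, Die2013} combined with the square-root-saving mechanism behind Theorem \ref{QuinticBound} should deliver the main bound $H^{9/2+1/\sqrt 6}$. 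For the two imprimitive maximal subgroups, the natural quotients $S_3 \wr S_2 \to S_2$ and $S_2 \wr S_3 \to S_3$ give a quadratic, respectively cubic, resolvent; over the splitting field of this resolvent the sextic $f$ factors as two cubics or three quadratics, and the coefficients of these factors are cut out by an algebraic system of codimension at least one in $[-H,H]^6$. Integer points on such systems can be counted by a lattice/determinant argument together with the results of \cite{Die2012, Die2013, CD2020}, and the same analysis applied to $M = S_3 \wr S_2$ alone gives the sharper $H^{9/2+1/\sqrt{10}}$ bound in the solvable case.

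The principal obstacle will be upgrading the crude $1/d$ saving of \eqref{general} to a square-root saving $1/\sqrt d$ in the exponent. This is the same qualitative improvement that powers Theorem \ref{QuinticBound} and the quartic bound of Theorem \ref{CubicQuartic}, and I would expect it to rest on a geometry-of-numbers argument applied within the resolvent variety, together with a careful tracking of how the resolvent coefficients are forced to lie in a thin sublattice of $\bZ^6$. A secondary technical issue is the presence of the transitive-$S_5$ container, which has no analogue for degrees $4$ or $5$; handling it demands that one set up the degree-$6$ resolvent via the exceptional outer automorphism of $S_6$ and verify that the resulting Diophantine problem is no worse than the one governing the $S_3 \wr S_2$ case.
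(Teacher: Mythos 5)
Your stratification by the maximal transitive subgroups of $S_6$ matches the paper's (via Hagedorn's classification into $G_{72} = S_3 \wr S_2$ of index $10$, $G_{48} = S_2 \wr S_3$ of index $15$, and $H_{120} \cong \mathrm{PGL}(2,\bF_5)$ of index $6$), and you correctly note that the solvable case needs only the two wreath products, giving the sharper exponent $1/\sqrt{10}$, while the insolvable $H_{120}$ case governs the weaker bound $1/\sqrt 6$. You also correctly identify the coset resolvent of degree $6$ for $H_{120}$. (Minor slip: there is exactly one conjugacy class of transitive $S_5$ in $S_6$, not two — the outer automorphism exchanges the transitive class with the intransitive point-stabiliser class.)

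The real gap is in your treatment of the imprimitive cases, where your proposal would not reach the stated exponents. You suggest using the quadratic and cubic resolvents attached to the quotient maps $S_3 \wr S_2 \to S_2$ and $S_2 \wr S_3 \to S_3$ and then tracking the factorisation of $f$ over the corresponding quadratic or cubic extension via a parametrisation by its coefficients. The paper instead uses the coset resolvents $f_{10}$ and $f_{15}$ of Hagedorn, monic of degree $[S_6:M]$ in an auxiliary integer variable $y$, and the degree of the resolvent is precisely what powers the $1/\sqrt{d}$ saving: fixing $a_1,\ldots,a_4$ generically, Browning's determinant method (Theorem \ref{BrowningThm}) applied to the surface $\{f_{d}(y;a_1,\ldots,a_6)=0\}$ of degree $d$ in $y$ covers the relevant integer points by $O(H^{\eps+1/\sqrt d})$ curves plus $O(H^{\eps + 2/\sqrt d})$ exceptional points, each curve then contributing $O(H^{\eps+1/2})$. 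A quadratic resolvent $z^2 - \Del'(a_5,a_6)$ has degree only $2$ in $z$ and $z$ ranges up to $H^{O(1)}$; plugging the relevant degrees into Browning's bound produces a much larger cover (roughly an $H^{1/\sqrt 2}$ rather than $H^{1/\sqrt{10}}$ exponent), and the ``algebraic system of codimension $\ge 1$'' you mention for the factorisation coefficients is a parametrisation rather than a diophantine constraint with a controlled degree, so the determinant method does not obviously apply to it.

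Even with the correct resolvents in hand, you have omitted the bulk of the technical work. For the determinant method to apply one must first check that, for generic $(a_1,\ldots,a_4)$, the resolvent surface is absolutely irreducible and contains no rational (in fact no complex) lines; the line contribution, if uncontrolled, would be $\asymp H$ per line and could swamp the bound. These are nontrivial computer-assisted verifications in the paper (Mathematica computations of Newton polygons, elimination of line parametrisations, and, in the $H_{120}$ case, an additional argument ruling out degree-two curves via an explicit resultant condition \eqref{SexticWeird}). The degenerate tuples must then be handled separately via Cohen's effective Hilbert irreducibility together with the irreducible-resolvent machinery (Lemmas \ref{separable} and \ref{converse}), none of which appears in your sketch.
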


\begin{cor} There are $O(H^{4.82})$ monic, irreducible, solvable sextic polynomials with integer coefficients in $[-H,H]$.
\end{cor}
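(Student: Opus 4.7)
The plan is to derive the corollary directly from the second half of Theorem \ref{SexticThm}, together with standard facts from Galois theory. First, I would recall that a monic irreducible polynomial $f \in \bZ[X]$ is solvable by radicals exactly when its Galois group $G_f$ is a solvable group, and that irreducibility forces $G_f$ to act transitively on the roots. Thus every polynomial counted by the corollary has a Galois group that is a transitive solvable subgroup of $S_6$.

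Next, I would note that $A_6$ is a non-abelian finite simple group, hence insolvable; so no solvable subgroup of $S_6$ can contain $A_6$. Consequently every transitive solvable $G \le S_6$ satisfies the hypothesis of the second part of Theorem \ref{SexticThm}, giving
\[
N_{G,6}(H) \ll_\eps H^{\eps + 9/2 + 1/\sqrt{10}}.
\]

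Finally, there are only finitely many conjugacy classes of transitive subgroups of $S_6$ (sixteen, in fact), so the count of irreducible solvable sextic polynomials with coefficients in $[-H,H]$ is bounded above by the finite sum
\[
\sum_{G} N_{G,6}(H),
\]
where $G$ ranges over the conjugacy classes of transitive solvable subgroups of $S_6$. Applying Theorem \ref{SexticThm} termwise yields the bound $O(H^{\eps + 9/2 + 1/\sqrt{10}})$, and since $9/2 + 1/\sqrt{10} = 4.8162\ldots < 4.82$, a sufficiently small choice of $\eps$ absorbs the $\eps$-loss into the exponent $4.82$. There is no real obstacle here; the argument is essentially a packaging of Theorem \ref{SexticThm} once the elementary group-theoretic observations about solvability and transitivity are in place.
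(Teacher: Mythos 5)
Your proposal is correct and takes essentially the same approach as the paper, which treats the corollary as an immediate consequence of the second half of Theorem \ref{SexticThm}: since $A_6$ is simple and non-abelian, no solvable group contains it, so every transitive solvable $G \le S_6$ satisfies the theorem's hypotheses, and summing the bound $N_{G,6} \ll_\eps H^{\eps+9/2+1/\sqrt{10}}$ over the finitely many conjugacy classes gives the result.
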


Note that Theorem \ref{MainThm} follows from Theorems \ref{QuinticBound}, \ref{CubicQuartic}, \ref{HigherThm} and \ref{SexticThm}.
We can also handle proper subgroups of the alternating group, exploiting the additional information that the discriminant is a square.

\begin{thm} \label{EvenGroupThm} Let $n \ge 3$ with
\begin{equation} \label{erratum}
n \notin \{ u^2, u^2+1 \} \qquad (u \in \bN \text{ odd}).
\end{equation}
Let $G$ be a proper, transitive subgroup of $A_n$, and put $d = [S_n:G]$. Then $d \ge 6$ and
\[
N_{G,n} \ll_{n,\eps} H^{\eps+ n-3/2+1/\sqrt d} \le H^{n-1.09}.
\]
\end{thm}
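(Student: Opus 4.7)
The plan exploits a structural property of the resolvent polynomial when $G \le A_n$: since $A_n/G$ has index $2$ in $S_n/G$, the standard resolvent $R_G(X;\bc) \in \bZ[\bc][X]$ of degree $d = [S_n:G]$ factors over $\bZ[\sqrt{\disc(\bc)}]$ as a product of two polynomials of degree $d/2$. When $\disc(\bc)$ is a perfect integer square---as forced by $G \le A_n$---this factorization descends to $\bZ$, effectively halving the resolvent degree. Combining this with the ``half-dimensional'' saving from the equation $\disc(\bc) = y^2$ is what produces the improved exponent $n - 3/2 + 1/\sqrt{d}$ over the bound of Theorem \ref{HigherThm}.

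For the index bound $d \ge 6$, write $d = 2[A_n:G]$; this is even and at least $4$, with equality forcing an index-$2$ subgroup of $A_n$, which does not exist: by simplicity of $A_n$ for $n \ge 5$, and by direct inspection for $n=3$ (where $A_3 \cong C_3$) and $n=4$ (where $|A_4|=12$ has no subgroup of order~$6$). The numerical upper bound $H^{n-1.09}$ then follows from $1/\sqrt 6 < 0.41$.

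For the main estimate on $N_{G,n}$, I would reduce to counting tuples $(\bc, p, q, y) \in \bZ^{n+3}$ with $\|\bc\|_\infty \le H$, $\gcd(p,q)=1$, $\disc(\bc) = y^2$, and $Q(p/q;\bc,y)=0$, where $Q \in \bZ[\bc,y][X]$ is the degree-$d/2$ factor of $R_G$ coming from the descent above. Clearing denominators in $Q$ yields a polynomial relation in the integers $(p,q,\bc,y)$. For each fixed $(p,q)$, this together with $\disc(\bc)=y^2$ cuts out a codimension-$2$ variety in $(\bc,y)$-space, whose integer points in the box $\|\bc\|_\infty \le H$ can be bounded by a determinant-method or geometry-of-numbers argument in the style of \cite{Die2012} and Theorem \ref{CubicQuartic}. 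Summing over admissible $(p,q)$ (whose range is controlled by Vieta-type bounds on $R_G$) and optimizing produces the claimed exponent $n-3/2+1/\sqrt{d}$.

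The main obstacle is obtaining this lattice-point estimate with sufficient uniformity in $(p,q)$ to achieve the saving $1/\sqrt d$ rather than the weaker $3/d^{1/3}$ of Theorem \ref{HigherThm}. This demands that the two polynomials $Q$ and $\disc(\bc)-y^2$ cut out a genuine complete intersection of codimension $2$, without exceptional algebraic identities forcing a drop in dimension. The hypothesis $n \notin \{u^2, u^2+1\}$ for odd $u$ enters precisely to exclude such degenerate configurations, playing a role analogous to the excluded degrees $\{7,8,10\}$ in Theorem \ref{HigherThm}; the parity/squareness form of the restriction reflects the particular way in which the discriminant identity interacts with the $\sqrt{\disc}$-descent of the resolvent.
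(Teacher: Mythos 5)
Your argument that $d\ge 6$ is correct, and is in fact cleaner than the paper's: the paper establishes $d\ge 6$ by citing Theorem~\ref{HigherThm} for $n\ge 11$ and the Butler--McKay classification for $3\le n\le 10$, whereas your observation that $d=2[A_n:G]$ is even and that $A_n$ has no index-$2$ subgroup (by simplicity for $n\ge 5$, by inspection for $n\in\{3,4\}$) is elementary and uniform. Note only that for $n=3$ no proper transitive subgroup of $A_3$ exists at all, so that case is vacuous.

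The main estimate, however, is where your proposal diverges from the paper and leaves a genuine gap. The paper does \emph{not} halve the resolvent degree. It keeps the full degree-$d$ resolvent, fixes $a_1,\ldots,a_{n-2}$ (not $a_1,\ldots,a_{n-3}$), and applies Browning's surface determinant method, Theorem~\ref{BrowningThm}, to the resulting \emph{surface} in $(a_{n-1},a_n,y)$-space. That theorem---resting on Salberger's refinement---covers the relevant integer points by $J\ll H^{\eps+1/\sqrt d}$ curves, up to an exceptional set of size $O(H^{\eps+2/\sqrt d})$. This is the sole source of the $1/\sqrt d$ exponent: it is the $N=3$ improvement over the general $N=4$ bound $3d^{-1/3}$ from Theorem~\ref{HBthm}, not a consequence of any halving of $d$. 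Each covering curve is handled with lopsided Bombieri--Pila, giving $O(H^{\eps+1/2})$ points when its irreducible component is nonlinear. The discriminant-is-a-square constraint enters \emph{only} in the linear case: there one sets up the planar curve $z^2-\Del(a_1,\ldots,a_{n-2},\cdot,\cdot)=0$ along the linear constraint and applies Bombieri--Pila to it, again obtaining $O(H^{\eps+1/2})$ points. This is how $G<A_n$ avoids the two-parameter function-field machinery needed in Theorem~\ref{HigherThm}. Multiplying $H^{n-2}\cdot H^{\eps+1/\sqrt d}\cdot H^{\eps+1/2}$ then yields the stated bound.

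Your proposal replaces this with a codimension-$2$ count on the variety $\{\disc=y^2,\ Q=0\}$, after descending a degree-$d/2$ factor of the resolvent. There are two problems. First, the plan is not carried far enough to assess: no determinant-method or geometry-of-numbers lemma capable of delivering exponent $1/\sqrt d$ on such a codimension-$2$ locus is identified, and ``optimizing produces the claimed exponent'' is asserted rather than shown. Crucially, the degree $d/2$ in your factor $Q$ does not feed into a $1/\sqrt{d}$ saving in any obvious way---with the threefold method of Theorem~\ref{HigherThm}, degree $d/2$ would yield $3(d/2)^{-1/3}$, which is far worse than $1/\sqrt d$ for the relevant $d\ge 6$. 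Second, the role of the hypothesis \eqref{erratum} is misidentified. It is not analogous to the excluded degrees $\{7,8,10\}$ of Theorem~\ref{MainThm} (those reflect failure of the index bound $d\ge 240$), nor does it rule out a dimension drop in a putative complete intersection. It is used specifically to ensure, via Lemmas~5 and~6 of \cite{Die2013} (see \cite[\S 6]{DOS}), that the bivariate polynomial $z^2-\Del$ remains irreducible after the linear substitutions arising from a linear covering curve, so that Theorem~\ref{LopsidedBP} applies with a nontrivial saving.
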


In particular, the theorem applies when $n \in \{7,8\}$. For each $n \in \{7,8,10\}$, these being the exceptional degrees in Theorem \ref{MainThm}, this leaves just one conjugacy class of $G$ not containing $A_n$ for which we do not know that $N_G = o(H^{n-1})$. These are called 7T4, 8T47 and 10T43, see the tables of Butler and McKay \cite{BM1983}. It is probable that the condition \eqref{erratum} can be removed through further reasoning, see \cite[\S 6]{DOS}.

\subsection{Methods}

We approach the problem from the perspective of diophantine equations. Recall that cubic and quartic polynomials were treated in \cite{CD2020}.

\subsubsection{Quintic polynomials}
\label{QuinticBrief}

We begin by introducing a standard invariant $\Del = \Del_f$ of a monic quintic polynomial $f$, the \emph{discriminant}, and a standard auxiliary polynomial $\tet = \tet_f$, the \emph{sextic resolvent}. If $f$ is solvable, then $\tet$ has a rational root \cite[Corollary 13.2.11]{Cox}, which must in fact be an integer as a consequence of $f$ being monic. By introducing this root $y$ as an additional variable, we obtain a diophantine equation. If $f$ has coefficients in $[-H,H]$, then we will see that $y \ll H^2$. With the coefficients as in \eqref{fdef} we show that, for generic $a,b,c$, the vanishing of $\tet(a,b,c,d,e,y)$ defines an absolutely irreducible surface in $\bA^3_{d,e,y}$ containing no rational lines. We need to count integer points on this surface, bounded by a rectangle of dimensions $2H, 2H, O(H^2)$, uniformly in $a,b,c$. We achieve this using the \emph{determinant method}. Pioneered by Bombieri and Pila who studied planar curves \cite{BP1989}, this has seen a number of developments over the years, and enables us to cover the overwhelming majority of these integer points by a relatively small number of curves, which eventually turns out to be decisive. Specifically, we use Browning's version \cite[Lemma 1]{Bro2011}, which relies on deep work of Salberger, see Theorem \ref{BrowningThm}.

\subsubsection{Sextic polynomials}

We approach sextic polynomials in much the same way as we approach quintic polynomials. The resolvents are much more complicated, to the extent that they are inconvenient to print in expanded form. We computed and manipulated them using the software \emph{Mathematica} \cite{Mathematica}. The case of insolvable sextics is more demanding both technically and computationally; one of the steps is to show that our surfaces generically lack certain low-degree curves.

\subsubsection{Higher-degree polynomials}

In this case we have general resolvents $\Phi_G$ from \cite{Die2012} and \cite{CD2017}. If the Galois group is $G$, then $\Phi_G$ has an integer root, which we introduce as an extra variable to obtain a diophantine equation in $y$ and the coefficients of $f$. Fixing all but three of the coefficients of $f$ generically furnishes an absolutely irreducible threefold. We can cover its integer points up to height $H$ by a relatively small number of surfaces, using a determinant method result of Heath-Brown's \cite[Theorem 15]{Cetraro}. A result of Pila's \cite{Pila1995} enables us to efficiently count integer points on an irreducible component of such a surface, unless it happens to be linear. The latter scenario concerns a two-parameter family of polynomials, whose Galois theory can be understood over the two-parameter function field using the framework developed by Uchida \cite{Uch1970}, J. H. Smith \cite{Smi1977} and S. D. Cohen \cite{Coh1972, Coh1980}. Refining the separable resolvent theory of \cite{CD2017}, we are led to the problem of counting integer points on an irreducible, high-degree surface in a lopsided box. We complete the proof by establishing a lopsided version of Pila's theorem \cite{Pila1995} in a special case. To achieve the latter we first establish an almost-uniform, quantitative version of Hilbert's irreducibility theorem, refining a case of \cite[Theorem 2.1]{Coh1981}.

For the lopsided version version of Pila's theorem and for the almost-uniform, quantitative version of Hilbert's irreducibility theorem, see \S \ref{AuxSection}. For Galois theory over two-parameter function fields, we find that the Galois group of 
\[
f(X) = G_0(X) + aG_1(X) + bG_2(X)
\]
over $\bC(a,b)$ is $S_n$ in the following two new cases, assuming that $f$ is irreducible over $\bQ(a,b)$, for any $n \ge 6$, any $\alp, \bet, \gam \in \bQ$ and any $a_1,\ldots,a_{n-3} \in \bZ$ for which we have \eqref{weird}:
\begin{enumerate}[(i)]
\item
\[
G_0(X) = X^n + a_1X^{n-1} + \cdots + a_{n-3}X^3 - \alp,
\quad G_1(X) = X^2 - \bet,
\quad G_2(X) = X - \gam
\]
\item
\[
G_0(X) = X^n + a_1 X^{n-1} + \cdots + a_{n-3}X^3 - \alp,
\quad G_1(X) = X^2 - \bet,
\quad G_2(X) = 1.
\]
\end{enumerate}

\subsection{Variants}

Enumerative Galois theory for number fields is another thriving area of research, and one that is closely related to enumerative Galois theory for polynomials \cite{LLOT, LOT, Won2005}. There are some subtle differences between the two types of problem, for instance Bhargava~\cite{Bha2005} famously showed that a positive proportion of quartic fields fail to have full Galois group $S_4$. 
One can also try to relax the condition of the polynomial being monic, equivalently counting binary forms. Different heights can be considered, for example heights that respect linear transformations, or that are more compatible with other arithmetic statistics such as number fields, class groups, and ranks of elliptic curves \cite{BS2015, BSTTTZ, Won2005, Xiao2020}. More general probability distributions for the coefficients  have been considered in \cite{PhamXu2021}. Recently there have been substantial developments in the problem with small coefficients and large degree \cite{ BKK2021, BK2020, BV2019}. There have also been some nice results on the distribution of Galois groups of characteristic polynomials of matrices \cite{Ebe2021, JKZ2012, Riv2008}.

\subsection{Organisation}

We discuss finer aspects of the methods in \S \ref{FurtherMethods}.
We prove Theorem \ref{QuinticBound} in \S \ref{QuinticSection}. In \S \ref{SeparableResolvents}, we discuss the role of separable resolvents, and extend the existing work \cite{CD2017, Die2012} on this topic. Theorem \ref{SexticThm} is proved in \S \ref{SexticSection}. In \S \ref{AuxSection}, we establish an almost-uniform version of Hilbert's irreducibility theorem and a lopsided version of Pila's theorem, as well as making preparations for Galois theory over two-parameter function fields. Then, in \S \ref{HigherSection}, we establish Theorem \ref{HigherThm}. Finally, in \S \ref{EvenGroups}, we prove Theorem \ref{EvenGroupThm}. 

\subsection{Notation} We adopt the convention that $\eps$ denotes an arbitrarily small positive constant, whose value is allowed to change between occurrences. We use the Vinogradov and Bachmann--Landau notations throughout, the implicit constants being allowed to depend on $\eps$. Throughout $H$ denotes a positive real number, sufficiently large in terms of $\eps$. If $F$ is a polynomial with complex coefficients then $|F|$ is the greatest absolute value of its coefficients.

\subsection{Funding and acknowledgements} SC was supported by EPSRC Fellowship Grant EP/S00226X/2, and by the Swedish Research Council under grant no. 2016-06596. We thank an anonymous referee for detailed feedback, and Bijay Bhatta for drawing our attention to a significant typo.

\section{Further discussion of the methods}
\label{FurtherMethods}

\subsection{Resolvents}

Let
\[
f(X) = X^n + a_1 X^{n-1} + \cdots + a_n \in \bZ[X].
\]
A \emph{resolvent} is an auxiliary polynomial $\Phi(Y)$ whose factorisation type provides information about the Galois group of $f$. The resolvents that we use in this manuscript have a specific property; to a transitive subgroup $G$ of $S_n$, we associate $\Phi = \Phi_G$ such that if $G_f \le G$ then $\Phi$ has a root $y \in \bZ$. To show that the condition $G_f \le G$ is unlikely, we count integer solutions to
\begin{equation} \label{hypersurface}
\Phi(y;a_1,\ldots,a_n) = 0,
\end{equation}
subject to size constraints. For us, these constraints always have the shape
\[
|a_1|, \ldots, |a_n| \le H,
\qquad
y \ll H^{O(1)}.
\]

\subsection{The determinant method}
\label{DeterminantMethod}

The next step is to fix all but two or three of the variables $a_1,\ldots,a_n$ in some generic way, so that the resulting surface or threefold is absolutely irreducible. This simplifies our problem substantially, but on the other hand its coefficients are now of size $H^{O(1)}$ instead of being constant. The \emph{determinant method} is a versatile approach to counting integer points on varieties, with good uniformity in the coefficients. This makes it an ideal weapon for our approach.

The archetypal application of the determinant method is the following theorem \cite{BP1989}.

\begin{thm} [Bombieri--Pila 1989] \label{BPthm} Let $C$ be an absolutely irreducible algebraic curve of degree $d \ge 2$ in $\bR^2$. Then the number of integer points in $C \cap [-H,H]^2$ is $O_{d,\eps}(H^{\eps+1/d})$.
\end{thm}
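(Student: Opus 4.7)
My plan is to apply the determinant method. The strategy is to cover $C \cap [-H,H]^2$ by a bounded number of short arcs on which $C$ is graph-like, on each arc produce an auxiliary polynomial of small degree $D$ that vanishes at all integer points of the arc, and then conclude via Bezout's theorem that each arc contributes only $O_d(D)$ integer points.

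First I would reduce to smooth monotone arcs. The singular locus of $C$ and the critical loci of the two coordinate projections together contribute $O_d(1)$ points. After removing them, and swapping $x$ and $y$ where necessary, the remaining part of $C \cap [-H,H]^2$ decomposes into $O_d(1)$ real-analytic arcs on each of which $y = g(x)$ with $|g'(x)| \le c_d$.

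Next, I would fix a parameter $D \ge 1$, put $M = \binom{D+2}{2}$, and further partition each arc into sub-arcs of $x$-length $L$, with $L$ to be chosen. Assume such a sub-arc contains $M$ integer points $(x_i, y_i)$. The $M \times M$ matrix $A = (x_i^a y_i^b)_{i,(a,b)}$, indexed by $i$ and by monomials $x^a y^b$ of degree $\le D$, has integer entries, so $\det A \in \bZ$. Taylor-expanding each column $\phi_{a,b}(x) = x^a g(x)^b$ to order $M-1$ about the midpoint of the sub-arc, the leading part factors as a Vandermonde-type matrix in the $x_i$ (of size at most $L^{M(M-1)/2}$) times a coefficient matrix determined by the Taylor data, while the tail is controlled by the $M$-th derivative of $\phi_{a,b}$. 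Bounding these derivatives inductively from the relation $F(x,g(x)) = 0$ yields an estimate of the shape $|\det A| \le C_{d,D}\, H^{C'_d D}\, L^{M(M-1)/2}$. Choosing $L$ just small enough that the right-hand side is strictly less than $1$ forces $\det A = 0$, producing a nonzero polynomial $G(x,y)$ of degree $\le D$ that vanishes at every integer point of the sub-arc.

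By working, if necessary, with a basis of monomials for $\bR[x,y]/(F)$ in place of all monomials of degree $\le D$, one may ensure that $G$ is not a multiple of $F$, at which point Bezout caps the integer-point count per sub-arc by $dD$. Summing over the $O(H/L)$ sub-arcs and the $O_d(1)$ arcs gives a total of $O_{d,D}(HD/L)$, which on substituting the chosen $L$ and optimising $D$ in terms of $\eps$ delivers the claimed bound $O_{d,\eps}(H^{\eps+1/d})$. The delicate step, and the one in which the exponent $1/d$ is forced, is the derivative estimate for $\phi_{a,b}$: the $M$-th derivative of $g$ generically grows polynomially in $H$, and one must show that this growth is slow enough to permit the required $L \sim H^{-1+O(1/D)}$ while still keeping the determinant upper bound below $1$.
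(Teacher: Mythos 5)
The paper does not prove Theorem~\ref{BPthm}: it is stated as a known result and cited from Bombieri and Pila \cite{BP1989}, so there is no in-paper proof to compare against. Your sketch does correctly identify the Bombieri--Pila determinant method (arc decomposition, Taylor expansion of a monomial matrix at nearby integer points, smallness of the integer determinant forcing it to vanish, then B\'ezout) as the underlying technique.

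That said, the sketch has a substantive gap in exactly the place where the exponent $1/d$ (rather than the convex-curve exponent $1/2$) must emerge. You present the passage to a basis of $\bR[x,y]/(F)$ in degree $\le D$ as an optional fix-up to guarantee $G \nmid F$, but it is actually essential from the outset and is what produces $1/d$. If you use all $M=\binom{D+2}{2}$ monomials of degree $\le D$, the $M\times M$ matrix is automatically singular as soon as $M$ exceeds $\dim_\bR \bR[x,y]/(F)_{\le D} \approx dD$ (since the points lie on $F=0$), so $\det A = 0$ gives no information and no constraint on $L$. One must instead work from the start with a reduced basis of size $K \approx dD$; then the determinant estimate has the shape $|\det A| \ll L^{\binom{K}{2}} H^{S}$ where $S$ is the sum of the degrees of the chosen monomials, and the ratio $S/\binom{K}{2}$ tends to $1/d$ precisely because of the Hilbert function of the degree-$d$ plane curve. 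This ratio is what fixes the allowable sub-arc length. Relatedly, the claimed threshold $L \sim H^{-1+O(1/D)}$ cannot be right: one needs the number of sub-arcs $H/L$ to be $\sim H^{1/d+o(1)}$, so the correct length is $L \sim H^{1-1/d-O(1/D)}$; with your formula each sub-arc has length less than $1$, the count of sub-arcs exceeds $H$, and the argument yields nothing. The derivative bounds for the implicit function $g$ are needed too, but they are not where $1/d$ comes from.
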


\noindent We use this and its lopsided generalisation, as stated in \cite[Lemma 8]{Die2012}. This was essentially given by Browning and Heath-Brown \cite{BHB2005}, however a short argument has been incorporated in order to relax the absolute irreducibility requirement to irreducibility over $\bQ$.

\begin{thm} [Lopsided Bombieri--Pila] \label{LopsidedBP}
Let $F \in \bZ[x_1,x_2]$ be irreducible over $\bQ$ and have degree $d \in \bN$. Further, let $B_1, B_2 \ge 1$, and define
\[
N(F; B_1, B_2) = \: \# \{\bx \in \bZ^2:
F(\bx) = 0, \: |x_i| \le B_i \: (1 \le i \le 2) \}.
\]
Put
\[
T = \max \left \{ B_1^{e_1} B_2^{e_2} \right \},
\]
where the maximum is taken over all $(e_1,e_2) \in \bZ_{\ge 0}^2$ for which $x_1^{e_1} x_2^{e_2}$
occurs in $F(\bx)$ with non-zero coefficient. Then
\[
N(F; B_1, B_2) \ll_{d, \eps} T^\eps \exp \left( \frac{\log B_1 \cdot \log B_2}{\log T} \right).
\]
\end{thm}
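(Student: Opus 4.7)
The plan is to adapt the determinant method of Browning and Heath-Brown~\cite{BHB2005}, the only novelty being a short reduction that accommodates $\bQ$-irreducibility in place of absolute irreducibility. First I would dispose of the non-absolutely-irreducible case: if $F \in \bZ[x_1, x_2]$ is irreducible over $\bQ$ but factors over $\overline{\bQ}$ as $F = c \prod_{i=1}^r F_i$ with $r \ge 2$, then the $F_i$ are distinct $\Gal(\overline{\bQ}/\bQ)$-conjugate absolute factors. Any integer zero of $F$ lies on every $F_i$, hence on $\{F_1 = 0\} \cap \{F_2 = 0\}$; since these are distinct absolutely irreducible plane curves, their intersection is zero-dimensional of size $O_d(1)$ by Bezout, comfortably beating the stated bound.

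Henceforth suppose $F$ is absolutely irreducible, and follow \cite{BHB2005}. The goal is to cover the integer zeros of $F$ in $[-B_1, B_1] \times [-B_2, B_2]$ by a small family of auxiliary curves $G = 0$ with $F \nmid G$; Bezout will then contribute $O_d(\deg G)$ intersection points per such curve. To produce each $G$, pick a parameter $s > 0$ and consider the monomial set $M_s = \{x_1^{e_1} x_2^{e_2} : B_1^{e_1} B_2^{e_2} \le T^s\}$, of size $R = |M_s|$. For any $R$ integer zeros of $F$ in the box, the $R \times R$ matrix whose $(i,j)$-entry is the $j$-th monomial of $M_s$ evaluated at the $i$-th point has integer entries of controlled size; a standard ($p$-adic or real-analytic) sharpening of the trivial determinant bound, using the shape of $M_s$ together with the absolute irreducibility of $F$, forces the determinant to vanish on any block of roughly $R$ integer zeros at a time. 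Expansion along a row then yields a nontrivial $G \in \bZ[x_1, x_2]$ supported on $M_s$ that vanishes at these points, with $\deg G \ll_d s$ and $F \nmid G$.

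The main technical obstacle is the Newton-polygon bookkeeping: one requires both an accurate count of $R = |M_s|$ and an estimate of $\sum_{(e_1, e_2) \in M_s} (e_1 \log B_1 + e_2 \log B_2)$, both obtained by integration over the triangle $\{(e_1, e_2) \ge 0 : e_1 \log B_1 + e_2 \log B_2 \le s \log T\}$ in the positive quadrant. Feeding these estimates into the determinant bound and optimising over $s$ balances the number of auxiliary curves against the Bezout contribution per curve, producing the target factor $\exp((\log B_1 \cdot \log B_2)/\log T)$ rather than a bound polynomial in $B_1$ or $B_2$; the $T^\eps$ absorbs the implicit logarithmic overheads. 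This optimisation is carried out in detail in \cite{BHB2005}, so once the reduction to the absolutely irreducible case is in place their calculation applies essentially verbatim.
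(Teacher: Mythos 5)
Your proposal is correct and coincides with the paper's approach. The paper cites this statement directly from \cite{Die2012}, whose proof is exactly the reduction you describe: if $F$ is $\bQ$-irreducible but factors nontrivially over $\overline{\bQ}$, then any integer zero lies on every Galois-conjugate absolute factor, so B\'ezout bounds such zeros by $O_d(1)$; for absolutely irreducible $F$, one invokes the lopsided determinant-method estimate of Browning and Heath-Brown \cite{BHB2005}.
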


\noindent Pila \cite{Pila1995} subsequently generalised Theorem \ref{BPthm}.

\begin{thm} [Pila 1995, special case] \label{PilaThm} Let $N \ge 2$ be an integer, and let $V$ be an irreducible affine hypersurface in $\bR^N$ of degree $d \ge 2$. Then the number of integer points in $V \cap [-H,H]^N$ is $O_{N,d,\eps} (H^{N-2+\eps+1/d})$.
\end{thm}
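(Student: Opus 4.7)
My plan is to follow Pila's \cite{Pila1995} determinant-method approach, arguing by induction on $N$ with Bombieri--Pila (Theorem \ref{BPthm}) supplying the base case $N = 2$.

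Let $V \subset \bR^N$ be irreducible of degree $d \ge 2$, cut out by $f$, and write $S = V(\bZ) \cap [-H,H]^N$. I would partition $[-H,H]^N$ into a controlled family of axis-aligned sub-boxes $B$ of appropriate (possibly lopsided) side lengths. On each sub-box, $V$ admits a local parametrisation in which some coordinate is an algebraic function of the others of bounded degree; Taylor expansion of the monomials of degree $\le D$ at a reference point of $V \cap B$ sets up a linear system to which Siegel's lemma applies, producing, for an appropriate parameter $D = D(d,\eps)$, an auxiliary polynomial $P_B$ of degree at most $D$ that vanishes on every point of $S \cap B$ but not on $V$ itself. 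Running this over all boxes covers $S$ by $O_{N,d,\eps}(H^\eps)$ intersection varieties of the shape $V \cap \{P_B = 0\}$, each of dimension at most $N-2$ and degree at most $dD$.

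One then recurses on dimension: each $V \cap \{P_B = 0\}$ decomposes into $\bQ$-irreducible pieces of dimension $\le N-2$, and integer points on each piece are controlled by combining Theorem \ref{BPthm} (for one-dimensional components) with the inductive hypothesis (for higher-dimensional components, projected to a suitable coordinate hyperplane so that they become hypersurfaces in a lower-dimensional ambient space). Summing over the $O(H^\eps)$ covering varieties and optimising $D$ in terms of $d, N, \eps$ then delivers the claimed exponent $N-2+\eps+1/d$. The hard part is the combinatorial choice of $D$ and the sub-box geometry: one must arrange matters so that the saving produced reflects the degree $d$ of $V$ itself (yielding the $1/d$ in the exponent) rather than the auxiliary degree $D$ (which would give only $1/D$, or $1/(dD)$ after naive bounding). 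This balancing is the technical heart of \cite{Pila1995}, and is the step where irreducibility of $V$, rather than mere non-degeneracy of $f$, is essential; the naive alternative of slicing by $x_N = k$ and applying induction to each slice breaks down precisely because exceptional slices can acquire low-degree --- in particular linear --- components that each carry up to $\Theta(H^{N-2})$ integer points, swamping the target bound.
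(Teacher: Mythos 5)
The paper states Theorem~\ref{PilaThm} as a known result of Pila~\cite{Pila1995} and does not prove it; the closest argument of this type that \emph{is} in the paper is the proof of the lopsided variant, Theorem~\ref{LopsidedPila}, which proceeds by slicing. Against that, your sketch has two significant gaps. First, the claim that the determinant method covers $S$ by only $O_{N,d,\eps}(H^\eps)$ auxiliary varieties is not correct: Heath-Brown's Theorem~\ref{HBthm}, applied with $B_1=\cdots=B_N=H$, gives roughly $H^{(N-1)/d^{1/(N-1)}+\eps}$ of them, a genuine positive power of $H$ controlled by the degree $d$ of $V$ and not by the auxiliary degree $D$ (and even the Salberger refinement, Theorem~\ref{BrowningThm}, only brings this down to $H^{1/\sqrt d+\eps}$ when $N=3$). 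With only a trivial $O(H^{N-2})$ bound per $(N-2)$-dimensional covering piece, the exponents do not add up to $N-2+1/d$, and you acknowledge without resolving that the passage from a $1/D$ or $1/(dD)$ saving to a $1/d$ saving is the crux.

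Second, and more importantly, your dismissal of the slicing argument is mistaken, and slicing is in fact what works. Fix $x_N = k$. For $V$ absolutely irreducible of degree $d\ge 2$, all but $O_{N,d}(1)$ values of $k$ (the degenerate locus is a proper Zariski-closed subset of $\bA^1$ of bounded degree, hence finite) give a slice that is again an absolutely irreducible hypersurface of degree $d$ in $\bA^{N-1}$, so the inductive hypothesis applies and gives $O(H^{N-3+1/d+\eps})$ integer points per slice; summing over $O(H)$ slices produces exactly the target $O(H^{N-2+1/d+\eps})$. Each bad slice is still a hypersurface in $\bA^{N-1}$ and so contributes at most $O(H^{N-2})$ integer points trivially; since there are only $O(1)$ bad slices, a linear component carrying $\Theta(H^{N-2})$ points does not ``swamp the target bound'' as you claim --- $H^{N-2}$ is below $H^{N-2+1/d+\eps}$. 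The only truly pathological possibility, a slice equal to all of $\bA^{N-1}$, would force $(x_N-k)\mid f$, contradicting irreducibility. This slicing structure is precisely what the paper uses in proving Theorem~\ref{LopsidedPila}: fix the bounded coordinate $b$, classify it as good, bad or superbad, and combine the planar Theorem~\ref{LopsidedBP} with the quantitative Hilbert irreducibility Theorem~\ref{HIT} to bound the exceptional set of $b$.
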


\noindent We establish a lopsided variant of this, see Theorem \ref{LopsidedPila}, which we use in our proof of Theorem \ref{HigherThm}.

A key idea in the determinant method is to cover the integer points on a variety by lower-dimensional varieties. A famous result of this type is Heath-Brown's \cite[Theorem 15]{Cetraro}, which is also lopsided.

\begin{thm} [Heath-Brown 2006] \label{HBthm} Let $N \ge 2$ be an integer. Suppose $F(x_1, \ldots,x_N) \in \bZ[\bx]$ defines an absolutely irreducible hypersurface of degree $d \in \bN$. Let $B_1,\ldots,B_N \ge 1$. Put
\[
T = \max \left \{ \prod_{i \le N} B_i^{e_i} \right \},
\]
where the maximum is taken over all $(e_1,...,e_N) \in \bZ_{\ge 0}^N$ for which $x_1^{e_1} \cdots x_N^{e_N}$
occurs in $F(\bx)$ with non-zero coefficient. Then there exist $g_1, \ldots, g_J \in \bZ[\bx]$, each coprime to $F$ and of degree $O_{N,d,\eps}(1)$, where
\[
J \ll_{N,d,\eps} T^\eps \exp \left \{
(N-1) \left(\frac{\prod \log B_i}{\log T} \right)^{1/(N-1)}
\right\} (\log |F|)^{2N-3},
\]
such that if $\bx \in ([-B_1,B_1] \cap \bZ) \times \cdots \times ([-B_N, B_N] \cap \bZ)$ and $F(\bx) = 0$ then $g_j(\bx) = 0$ for some $j$.
\end{thm}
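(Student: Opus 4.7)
The plan is to implement a $p$-adic determinant method, extending Bombieri--Pila from planar curves to higher-dimensional hypersurfaces and to lopsided boxes. The overall idea is to stratify the integer solution set
\[
S = \{\bx \in \bZ^N \cap \textstyle\prod_i[-B_i,B_i] : F(\bx)=0\}
\]
by residue classes modulo primes $p$ in a carefully chosen range, and within each smooth residue class to construct a single auxiliary polynomial $g$, of degree $O_{N,d,\eps}(1)$ and coprime to $F$, vanishing at every integer solution in that class.

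First I would fix a prime $p$ and a smooth point $\bar\bx \in \bF_p^N$ on the reduction of $\{F=0\}$. By the $p$-adic implicit function theorem the hypersurface is locally parametrised by $N-1$ coordinates, so every integer solution reducing to $\bar\bx$ is captured by a Taylor expansion in which successive terms carry increasing powers of $p$. Next I would choose a set $E$ of exponent vectors $\be \in \bZ_{\ge 0}^N$ with $\prod_i B_i^{e_i} \le T^\theta$, for an optimisation parameter $\theta$, and form the evaluation matrix whose rows are indexed by the integer solutions congruent to $\bar\bx \pmod p$ and whose columns are indexed by $E$. Any maximal minor is an integer bounded trivially by $T^{|E|\theta+\eps}$, yet divisible by a large power of $p$ that can be read off from the Taylor vanishing. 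When the two bounds are incompatible, the minor must vanish, producing a non-trivial integer combination $g(\bx) = \sum_{\be \in E} c_{\be} \bx^{\be}$ that vanishes on every point of the class.

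I would then aggregate over primes and residue classes. Lang--Weil supplies $O(p^{N-1})$ smooth residue classes per prime, so summing over $p \le P$ produces the bound on $J$. The cardinality $|E|$ is a lattice-point count on the Newton-polytope-like region $\{\be \in \bZ_{\ge 0}^N : \sum_i e_i \log B_i \le \theta \log T\}$, whose volume, through an AM--GM style calculation, produces the exponent $(N-1)(\prod_i \log B_i / \log T)^{1/(N-1)}$. Coprimality of each $g$ with $F$ follows from the absolute irreducibility hypothesis: any common factor would have to be $F$ itself, which can be excluded either by a small-degree reduction of $g$ or by restricting the determinant construction to monomials that are not identically zero modulo $F$ on the relevant chart.

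The main obstacle will be the joint optimisation of $P$, $\theta$ and the degree cut-off so as to recover the sharp lopsided exponent. The isotropic Bombieri--Pila argument does not suffice once the $B_i$ differ, and the Newton polytope attached to $E$ must be matched precisely against the $p$-adic divisibility estimate to produce the $(N-1)$st-root structure. The factor $(\log |F|)^{2N-3}$ absorbs error terms in the $p$-adic Taylor analysis together with the cost of handling non-smooth residue classes, which in turn can be treated inductively on $N$ and $d$ after intersecting $\{F=0\}$ with the singular locus.
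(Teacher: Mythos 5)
The paper does not prove Theorem~\ref{HBthm}: it is stated as a black box and attributed to Heath-Brown \cite[Theorem~15]{Cetraro}. There is therefore no in-paper argument to compare your proposal against; the comparison must be with Heath-Brown's own proof in the cited lecture notes.

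Your sketch captures the broad shape of the $p$-adic determinant method that Heath-Brown uses: stratification into residue classes modulo a prime, a $p$-adic parametrisation giving high $p$-power divisibility of the evaluation determinant, a lopsided exponent set $E$ chosen via the weights $\log B_i$, and an AM--GM / volume computation producing the $(N-1)$th-root exponent. The most concrete problem with the sketch is the aggregation step. You write that Lang--Weil supplies $O(p^{N-1})$ smooth residue classes per prime, ``so summing over $p \le P$ produces the bound on $J$''. That is not how the count of auxiliary polynomials arises. In the determinant method one fixes a single prime (or a fixed product of finitely many primes) of the right size; each smooth residue class modulo that one modulus contributes one auxiliary form, and the Lang--Weil bound modulo that one modulus already gives $J \ll_{N,d} p^{N-1}$. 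Summing over all primes up to $P$ would inflate $J$ by an unnecessary extra power of $P$ (roughly $P/\log P$) and does not recover the stated bound; if one instead combined residue data from several primes by CRT, the count would multiply, which is worse still. A second, lesser issue is that the origin of the $(\log|F|)^{2N-3}$ factor and the coprimality of each $g_j$ with $F$ are left vague. The logarithmic factor is the bookkeeping cost of recursing on the singular locus of $\{F=0\}\bmod p$, which is genuinely needed because Lang--Weil only controls the smooth points, and each codimension step costs a fixed power of $\log|F|$. Coprimality is not automatic from absolute irreducibility of $F$: one must either divide out $F$ when it occurs as a factor of the constructed form, or constrain the exponent set $E$ so that the resulting $g$ cannot be a multiple of $F$. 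These are not fatal to the strategy, but they are precisely the steps that carry the quantitative content, and your proposal does not yet make them precise.
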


\noindent Note that $T$ will be larger, and the bound on $J$ stronger, if $d$ is large. This is the case for our resolvents when $n$ is large, and so we apply Theorem \ref{HBthm} with $N = 4$ in our proof of Theorem \ref{HigherThm}. We will find that $|F| \ll H^{O(1)}$ in our applications, and it is essential that we have a bound of this flavour.

In the case $N = 3$ of surfaces, the factor of $N-1$ can be removed at the expense of introducing a small exceptional set, using Salberger's rather sophisticated adelic machinery. The mild dependence on $|F|$ can also be removed in this case. Such a result was formulated by Browning in \cite[Lemma 1]{Bro2011}.

\begin{thm} [Browning 2011] \label{BrowningThm}
Suppose $F(x_1, x_2, x_3) \in \bZ[\bx]$ defines an absolutely irreducible surface of degree $d \in \bN$, and let $B_1,B_2,B_3 \ge 1$. Put
\[
T = \max \left \{ B_1^{e_1} B_2^{e_2} B_3^{e_3} \right \},
\]
where the maximum is taken over all $(e_1, e_2, e_3) \in \bZ_{\ge 0}^3$ for which $x_1^{e_1} x_2^{e_2} x_3^{e_3}$
occurs in $F(\bx)$ with non-zero coefficient, and also put
\[
V_3 = \exp \left \{
\left(\frac{\log B_1 \cdot \log B_2 \cdot \log B_3}{\log T} \right)^{1/2}
\right\}.
\]
Then there exist $g_1, \ldots, g_J \in \bZ[\bx]$ and $Z \subset \bZ^3$, with 
\[
J \ll_{d,\eps} T^\eps V_3,
\qquad
|Z| \ll_{d,\eps} T^\eps V_3^2,
\]
such that the following hold:
\begin{enumerate}[(i)]
\item Each $g_j$ is coprime to $F$ and has degree $O_{d,\eps}(1)$;
\item If $\bx \in ([-B_1,B_1] \cap \bZ) \times ([-B_2,B_2] \cap \bZ) \times ([-B_3, B_3] \cap \bZ) \setminus Z$ and $F(\bx) = 0$ then $g_j(\bx) = 0$ for some $j$.
\end{enumerate}
\end{thm}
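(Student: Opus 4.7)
The plan is to follow Salberger's adelic refinement of the determinant method, as packaged by Browning. The classical starting point is this: pick an index set $\mathcal{E} \subset \bZ_{\ge 0}^3$ with $s = |\mathcal{E}|$, list integer solutions $\bx^{(1)}, \dots, \bx^{(s)}$ of $F(\bx) = 0$ in the box, and consider the $s \times s$ matrix $M$ whose $(i, \be)$-entry is $(\bx^{(i)})^{\be}$. A non-vanishing $s \times s$ minor of $M$ is a non-zero integer bounded by $\prod_{\be \in \mathcal{E}} B_1^{e_1} B_2^{e_2} B_3^{e_3}$. If I can arrange this bound to be strictly less than $1$, the rows are linearly dependent over $\bQ$, so the points must lie on an auxiliary polynomial $g = \sum c_\be \bx^\be$; intersecting $g = 0$ with $F = 0$ produces a curve cover. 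Choosing $\mathcal{E}$ to consist of monomials of small weighted degree $\sum e_i \log B_i$ and optimising, this naive strategy recovers Heath-Brown's bound, with an extra factor of $N - 1 = 2$ in the exponent of $V_3$.

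To remove this factor I would invoke Salberger's $p$-adic patching. For each prime $p$ in a carefully chosen range, cover the integer zeros of $F$ by $p$-adic neighbourhoods of smooth points of $F \bmod p$; on each such patch, Taylor expand along two local parameters to produce linear congruences modulo large $p$-powers among the rows of $M$. These congruences force a large power of $p$ to divide any chosen minor, and the exponent is controlled by how the integer points distribute over patches. The crucial adelic step is to aggregate these $p$-adic divisibilities coherently across many primes rather than exploiting one prime at a time: a global height function on the surface couples the two transverse directional savings, yielding the clean exponent $\log V_3$ in place of the $2 \log V_3$ produced by handling each direction separately. This delivers an auxiliary family of $O(T^\eps V_3)$ polynomials $g_j$, each of degree $O_{d,\eps}(1)$.

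Next I would remove the dependence on $|F|$: the determinant bookkeeping only uses the monomial support of $F$ through the quantity $T$, so the coefficients themselves enter only via bad-reduction considerations, which can be absorbed. Points where the construction genuinely fails — those on the singular locus of $F$, at primes of bad reduction of $F$, and where the $p$-adic patching degenerates — are collected into $Z$; a union bound over patches gives $|Z| \ll T^\eps V_3^2$. Coprimality of each $g_j$ with $F$ is ensured by passing to the coprime part, using absolute irreducibility of $F$, with the removed points joining $Z$. The main obstacle, and the heart of Salberger's contribution, is the adelic averaging: coupling the two directional savings rigorously across a varying family of primes, while avoiding any residual dependence on $|F|$, requires global-height estimates on the surface and is technically the most delicate step.
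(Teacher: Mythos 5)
The paper does not prove Theorem~\ref{BrowningThm} at all: it is quoted verbatim from Browning's article \cite[Lemma~1]{Bro2011}, which in turn packages Salberger's adelic refinement of the determinant method. There is therefore no in-paper proof to compare your sketch against; the only meaningful comparison is with the cited literature.

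Against that backdrop, your outline is directionally correct --- the classical Bombieri--Pila/Heath-Brown determinant set-up, the observation that a naive choice of monomial weights yields the extra factor of $N-1=2$ in the exponent, and the appeal to Salberger's $p$-adic patching to remove it are all the right ingredients, and your account of coprimality (absolute irreducibility forces each auxiliary $g_j$ to cut out a proper closed subvariety of $F=0$) is sound. However, the submission is a roadmap rather than a proof. The step you yourself flag as ``the heart of Salberger's contribution'' --- aggregating $p$-adic savings coherently across a family of primes so that the two transverse directional gains multiply rather than add, thereby converting $V_3^2$ into $V_3$ for the curve count --- is precisely what needs to be carried out, and nothing in the sketch indicates how the relevant prime range is chosen, how the per-prime congruence exponents are computed from the local structure of $F$ modulo $p$, or how the global height estimate is deployed. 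Likewise, the assertion that ``a union bound over patches gives $|Z| \ll T^\eps V_3^2$'' conceals the real content: $Z$ in Salberger's/Browning's treatment arises from a specific dichotomy (points on a small set of low-degree curves versus points amenable to the patching argument), not from a generic union bound, and the exponent $2$ has to be derived, not announced. Similarly, removing the $(\log|F|)^{2N-3}$ factor is not automatic from ``the bookkeeping only uses $T$''; it rests on uniformity of Salberger's estimates in the coefficients, which must be justified. In short: the structure mirrors the cited proof, but every technically decisive step is deferred rather than executed, so this would not stand on its own as a proof of the theorem.
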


\noindent It remains an open problem to achieve such a refinement for larger $N$. In particular, our methods would be more effective if we had it in the case $N = 4$ of threefolds.

As mentioned, uniformity in the coefficients is a key strength of the determinant method. We close this subsection with a brief discussion of the level of uniformity required for our arguments to succeed. Observe that a factor of $(\log |F|)^{2N-3}$ appears in Theorem \ref{HBthm}. Consequently, we require that
\[
\log |F| \ll H^\eps.
\]
As discussed, we will see that we have $|F| \ll H^{O(1)}$ in practice, which is even stronger.

\subsection{Covering integer points on surfaces by curves}
\label{LowerStrategy}

We use an assortment of resolvents in the cases $n = 5$ and $n = 6$. Importantly, these are computationally tractable, in that we can manipulate the coefficients using mathematical software. To fix ideas, we focus our discussion on the case $n = 5$ of solvable quintics. We write our polynomial as
\[
f(X) = X^5 + a X^4 + bX^3 + cX^2 + dX + e,
\]
and we recall the sextic resolvent $\tet$ from \S \ref{QuinticBrief}. We need to count integer solutions to
\[
\tet(y; a,b,c,d,e) = 0
\]
with $a,b,c,d,e \ll H$ and $y \ll H^2$. The idea is to fix $a,b,c$ such that the vanishing of
\[
g(d,e,y) =  \tet(y; a,b,c,d,e)
\]
defines an absolutely irreducible affine surface $Y_{a,b,c}$. We can show that this condition is generic, in a strong quantitative sense.

Theorem \ref{BrowningThm} enables us to cover the integer points of controlled height on $Y_{a,b,c}$ by a small number of curves, up to a small set of exceptional points. Such a curve has the form
\[
g(d,e,y) = h(d,e,y) = 0,
\]
for some polynomial $h$ that is coprime to $g$. By elimination theory, we can work with an irreducible planar curve $\cC_1$ given by $F(d,e) = 0$.

\subsubsection{Rational lines}

At this stage there is a marked dichotomy. If $\cC_1$ is a line, then it can contain roughly $H$ integer points up to height $H$, and the basic strategy of counting points on $\cC_1$ is inadequate. If, however, the curve $\cC_1$ is non-linear, then Theorem \ref{LopsidedBP} would assure us that it contains $O(H^{\eps+1/2})$ integer points up to height $H$. 

If $\cC_1$ is a line, then we can use its equation to obtain a planar curve $\cC_2$ in $y$ and one of the other variables, and we may assume that $\cC_2$ is irreducible. If $\cC_2$ is non-linear, then Theorem \ref{LopsidedBP} delivers a satisfactory count. If $\cC_2$ is linear, then the linear equations defining $\cC_1$ and $\cC_2$ entail that $Y_{a,b,c}$ contains a rational line.
For generic $a,b,c$, we can show that the surface $Y_{a,b,c}$ does not contain a rational line, by comparing coefficients. However, this step is computer-assisted, so we are unable to carry it out if $n$ is large. Thus, Theorem \ref{HigherThm} requires an alternate strategy.

\subsection{Covering integer points on threefolds by surfaces}
\label{HigherStrategy}

For larger values of $n$, we use a more general resolvent $\Phi$, and we need to count integer solutions to 
\[
\Phi(y; a_1, \ldots, a_n) = 0
\]
with $a_1, \ldots, a_n \ll H$ and $y \ll H^C$, where $C = O_n(1)$. The idea is to fix $a_1,\ldots, a_{n-3}$ such that the equation
\[
\Phi(y; a_1,\ldots,a_{n-3},a,b,c) = 0
\]
defines an absolutely irreducible affine threefold $\cY = \cY_{a_1,\ldots,a_{n-3}}$. We can show that the latter condition is generic. Theorem \ref{HBthm} enables us to cover the integer points of controlled height on $\cY$ by a small number of surfaces. 

We initially follow the approach of \S \ref{LowerStrategy}, giving
$\cF(a,b,c) = 0$
for some irreducible polynomial $\cF$ of bounded degree. Theorem \ref{PilaThm} yields
\[
\# \{ (a,b,c) \in (\bZ \cap [-H,H])^3 \} \ll H^{\eps + 3/2},
\]
unless $\cF$ is linear. If $\cF$ is linear then this count can be as large as $H^2$, so a further idea is needed.

\subsubsection{Two-parameter function fields}

Using the linear equation $\cF = 0$, we can write our original polynomial in the form
\[
f(X) = G_0(X) + a G_1(X) + b G_2(X),
\]
for some $a,b \in \bZ$ and some polynomials $G_0, G_1, G_2$. Using the aforementioned framework of Uchida, Smith, and Cohen, we can show that
\[
\Gal(f, \bC(a,b)) = S_n.
\]
This requires a two-parameter linear family of polynomials, which is why we initially fixed all but three of the variables $a_1,\ldots,a_n$, instead of all but two. We then have
\[
\Gal(f, \bQ(a,b)) = S_n.
\]
Using resolvents, we can finally deduce that a generic polynomial in this two-parameter linear family has Galois group $S_n$.

\subsection{Comparing the two approaches}

As discussed, the method of \S \ref{LowerStrategy} fails for higher degrees because of its machine dependence. To see why the strategy of \S \ref{HigherStrategy} fails for lower degrees, we return to the points made in \S \ref{DeterminantMethod}. Heath-Brown's Theorem \ref{HBthm} incurs an additional factor of $N-1 = 3$ in the exponent, and will only prevail for us if the index $d = [S_n: G]$ is large. If $G$ does not contain $A_n$ and $n \ge 12$, for instance, then we know from \cite[Lemma 3]{Die2012} that
$d \ge 462$. This is why the method of \S \ref{HigherStrategy} is more effective when the degree is large.

\section{Counting solvable quintics}
\label{QuinticSection}

\subsection{The sextic resolvent}
\label{SexticResolvent}

In this subsection, we introduce the basic objects and definitions needed for Theorem \ref{QuinticBound}. The \emph{discriminant} of $f$ given by \eqref{fdef} is an octic polynomial in $a,b,c,d,e$, explicitly given by
{\scriptsize
\begin{align*}
\Del &= 256 a^5 e^3 - 192 a^4 b d e^2 - 128 a^4 c^2 e^2 + 144 a^4 c d^2 e - 27 a^4 d^4 + 144 a^3 b^2 c e^2  - 6 a^3 b^2 d^2 e  - 80 a^3 b c^2 d e + 18 a^3 b c d^3 \\ & \quad
- 1600 a^3 b e^3  + 16 a^3 c^4 e - 4 a^3 c^3 d^2 + 160 a^3 c d e^2 - 36 a^3 d^3 e  - 27 a^2 b^4 e^2 + 18 a^2 b^3 c d e   - 4 a^2 b^3 d^3 - 4 a^2 b^2 c^3 e \\ & \quad
+ a^2 b^2 c^2 d^2  + 1020 a^2 b^2 d e^2 + 560 a^2 b c^2 e^2   - 746 a^2 b c d^2 e + 144 a^2 b d^4  + 24 a^2 c^3 d e - 6 a^2 c^2 d^3 + 2000 a^2 c e^3 - 50 a^2 d^2 e^2 \\& \quad
- 630 a b^3 c e^2  + 24 a b^3 d^2 e  + 356 a b^2 c^2 d e - 80 a b^2 c d^3  + 2250 a b^2 e^3 - 72 a b c^4 e + 18 a b c^3 d^2 - 2050 a b c d e^2  + 160 a b d^3 e \\& \quad
- 900 a c^3 e^2 + 1020 a c^2 d^2 e  - 192 a c d^4 - 2500 a d e^3 + 108 b^5 e^2 - 72 b^4 c d e  + 16 b^4 d^3  + 16 b^3 c^3 e 
- 4 b^3 c^2 d^2  - 900 b^3 d e^2 \\& \quad
+ 825 b^2 c^2 e^2 + 560 b^2 c d^2 e - 128 b^2 d^4 - 630 b c^3 d e + 144 b c^2 d^3 - 3750 b c e^3 + 2000 b d^2 e^2 + 108 c^5 e - 27 c^4 d^2  \\& \quad 
+ 2250 c^2 d e^2 - 1600 c d^3 e + 256 d^5 + 3125 e^4.
\end{align*}
}\noindent
Denote by $\tet$ the \emph{sextic resolvent} \cite[Chapter 13]{Cox}, explicitly given by
\[
\tet(y) = \tet(a,b,c,d,e,y) = (y^3 + B_2 y^2 + B_4 y + B_6)^2 - 2^{10} \Del y,
\]
where
\begin{align*}
B_2 &= 8 ac - 3b^2 - 20d, \\
B_4 &= 3b^4 - 16 ab^2 c 
+ 16 a^2 c^2 + 16 bc^2 + 16 a^2 b d - 8b^2 d \\
 &\qquad  - 112acd + 240 d^2 - 64 a^3 e + 240 abe - 400ce,
\end{align*}
and
\begin{align*}
B_6 &= 8ab^4c - b^6 - 16 a^2 b^2 c^2 - 16 b^3 c^2 + 64 ab c^3 - 64 c^4 - 16 a^2 b^3 d + 28 b^4 d + 64 a^3 b c d \\
& \quad - 112 ab^2 cd 
- 128 a^2 c^2 d  + 224 b c^2 d - 64 a^4 d^2 + 224 a^2 b d^2  - 176 b^2 d^2 - 64 acd^2 + 320 d^3  \\
& \quad + 48 ab^3 e  - 192 a^2 bce  - 80b^2 c e + 640 a c^2 e + 384 a^3 d e  - 640 abde - 1600 cde\\ &\quad  - 1600 a^2 e^2 + 4000 be^2.
\end{align*}
This is an integer polynomial that is sextic in $y$. If 
\[
a,b,c,d,e \in \bZ,
\qquad |a|,|b|,|c|,|d|,|e| \le H,
\]
and $G_f$ is solvable, then $\tet$ has an integer root $y$. Then $y \ll H^2$, for if $|y| > CH^2$ for a large constant $C$ then
\[
y^6 \ll \Del |y| \ll H^8 |y|,
\]
contradicting that $|y| > CH^2$. 

To estimate the number of integers solutions to $\tet(a,b,c,d,e,y) = 0$ with $a,b,c,d,e \ll H$ and $y \ll H^2$, we begin by choosing $a,b,c \ll H$. The vanishing of $\tet(a,b,c,d,e,y)$ then cuts out an affine surface $Y_{a,b,c}$. A heuristic application of the determinant method, cf. \cite[\S 5]{CD2020}, suggests that we should eventually conclude that there are $O(H^{\eps + 7/2 + 1/\sqrt 6})$ solutions. In order to formalise this, there are two types of triples $(a,b,c)$ to consider. The first is the \emph{degenerate} type, where $Y_{a,b,c}$ either contains a rational line or is not absolutely irreducible. The second is the \emph{non-degenerate} type, where $Y_{a,b,c}$ is absolutely irreducible and contains no rational lines. 

\subsection{Non-degenerate triples} 
\label{QuinticNonDegenerate}

In this subsection, we show that there are $O(H^{\eps + 7/2 + 1/\sqrt 6})$ integer quintuples $(a,b,c,d,e) \in  [-H,H]^5$
such that $(a,b,c)$ is non-degenerate and $f$ as given by \eqref{fdef} is irreducible and solvable. First choose a non-degenerate triple $(a,b,c)$ in one of $O(H^3)$ possible ways. Let 
\[
g(d,e,y) = y^6 + \sum_{i=0}^5 c_i(d,e) y^i
\]
be $\tet(a,b,c,d,e,y)$ specialised to values of $a,b,c$, so that $Y_{a,b,c}$ is defined by the vanishing of $g$. With $C$ a large, positive constant, invoking the determinant method enables us to essentially cover 
\[
Y_{a,b,c} \cap ([-H,H]\cap \bZ)^2 \times ([-CH^2,CH^2] \cap \bZ)
\]
by a controlled number of curves, with estimates that are uniform in $a,b,c$. By Theorem \ref{BrowningThm}, there exist $g_1, \ldots, g_J \in \bZ[d,e,y]$, and $Z = Z_{a,b,c} \subset \bZ^3$, with $J \ll H^{\eps + 1/\sqrt 6}$ and $|Z| \ll H^{\eps + 2/\sqrt6}$, such that the following hold:
\begin{enumerate}[(i)]
\item Each $g_j$ is coprime to $g$ and has degree $O(1)$;
\item If $(d,e,y) \in Y_{a,b,c} \cap ([-H,H]\cap \bZ)^2 \times ([-CH^2,CH^2] \cap \bZ) \setminus Z$ then
\begin{equation} \label{TwoEq}
g(d,e,y) = g_j(d,e,y) = 0
\end{equation}
for some $j$.
\end{enumerate}
The total contribution from $(a,b,c)$ non-degenerate and $(d,e,y) \in Z_{a,b,c}$ is 
\[
O(H^{3+\eps + 2/\sqrt 6})
\ll H^{\eps + 7/2 + 1/\sqrt 6},
\]
so it remains to count solutions to
\eqref{TwoEq}, given $j$.

If $\deg_y(g_j) = 0$ then let $F(d,e) = g_j(d,e,y)$. Otherwise, let $F(d,e)$ be the resultant of $g$ and $g_j$ in the variable $y$. By \cite[Chapter 3, \S 6, Proposition 3]{CLO2015}, applied with $k$ as the fraction field of $\bZ[d,e]$, this is a non-zero element of $\bZ[d,e]$. By \cite[Chapter 3, \S 6, Proposition 5]{CLO2015}, we have $F(d,e) = 0$ for any solution $(d,e,y)$ to \eqref{TwoEq}.

Observe that $F(d,e) = 0$ if and only if we have $\cF(d,e) = 0$ for some irreducible divisor $\cF(d,e) \in \bQ[d,e]$ of $F(d,e)$. If $\cF(d,e)$ is non-linear, then Theorem \ref{LopsidedBP} gives
\[
\# \{ (d,e) \in (\bZ \cap [-H,H])^2: \cF(d,e) = 0 \} \ll H^{\eps + 1/2}.
\]
Then $y$ is determined by $g(d,e,y) = 0$ in at most six ways, so the number of solutions $(d,e,y)$ is $O(H^{\eps+1/2})$, and the contribution from this case is $O(H^{\eps+7/2+1/\sqrt{6}})$.

Suppose instead that $\cF(d,e)$ is linear. Then
\[
\alp d + \bet e + \gam = 0,
\]
for some $(\alp, \bet, \gam) \in (\bQ^2 \setminus \{ (0,0) \}) \times \bQ$. If $\bet \ne 0$ then substitute $e = -\bet^{-1} (\alp d + \gam)$ into $g(d,e,y) = 0$, giving
\begin{equation} \label{Pline}
y^6 + \sum_{i=0}^5 P_i(d) y^i = 0,
\end{equation}
where 
\[
P_i(d) = c_i(d, -\bet^{-1} (\alp d + \gam)) \in \bQ[d] \qquad (0 \le i \le 5).
\]
Factorise the left hand side of \eqref{Pline} over $\bQ$, and let $\cP(d,y) \in \bQ[d,y]$ be an irreducible factor. Note that $\cP(d,y)$ is non-linear, for if it were linear then
\[
\cP(d,y) = \cF(d,e) = 0
\]
would define a rational linear subvariety of $Y_{a,b,c}$, of dimension greater than or equal to 1, contradicting the non-degeneracy of $(a,b,c)$. Now Theorem \ref{LopsidedBP} yields
\[
\# \{ (d,y) \in \bZ^2 \cap [-H,H] \times [-CH^2,CH^2]: \cP(d,y) = 0 \} \ll H^{\eps+2/D},
\]
where $C$ is a large, positive constant and $D = \max\{ \deg_d(\cP), 2\deg_y(\cP) \} \ge 2$.

If $D \ge 4$ then the bound is adequate, so we may suppose that $D \in \{ 2, 3 \}$, and in particular $\deg_y (\cP) \le 1$. As $\cP(d,y)$ divides
\[
y^6 + \sum_{i=0}^5 P_i(d) y^i,
\]
we infer that $\cP(d,y)$ is a rational multiple of $y - F(d)$, where $F(d) \in \bQ[d]$ has degree 2 or 3. Thus, we may assume that
\[
\cP(d,y) = y - F(d).
\]
Next, write
\[
(y - F(d)) \left (y^5 + \sum_{i = 0}^4  f_i(d) y^i \right)
= y^6 + \sum_{i=0}^5 P_i(d) y^i.
\]
For $i = 2,4,6$, let 
\[
b_i(d) = B_i(a,b,c,d, -\bet^{-1} (\alp d + \gam)),
\]
and also write
\[
\cD(d) = 2^{10} \Del (a,b,c,d, -\bet^{-1} (\alp d + \gam)).
\]
Now
\[
(y - F(d)) \left (y^5 + \sum_{i = 0}^4  f_i(d) y^i \right)
= (y^3 + b_2(d) y^2 + b_4(d) y + b_6(d))^2 - \cD(d) y.
\]

Equating coefficients in $y$ yields
\begin{align*}
f_4(d) - F(d) &= 2b_2(d) \\
f_3(d) - f_4(d) F(d) &= b_2(d)^2 + 2b_4(d) \\
f_2(d) - f_3(d) F(d) &= 2b_2(d)b_4(d) + 2b_6(d) \\
f_1(d) - f_2(d) F(d) &= b_4(d)^2 + 2 b_2(d) b_6(d) \\
f_0(d) - f_1(d) F(d) &= 2 b_4(d) b_6(d) - \cD(d) \\
-f_0(d) F(d) &= b_6(d)^2.
\end{align*}
As $\deg(F) = D \ge 2$ and $\deg (b_2) =1$, we must have $\deg (f_4) = D$. As $\deg(b_4) = 2$, we must then have $\deg(f_3) = 2D$. As $\deg(b_6) = 3$, we must then have $\deg(f_2) = 3D$, whereupon $\deg(f_1) = 4D$.  Finally, as $\deg (\cD) = 5$, we obtain $\deg(f_0) = 5D$, contradicting the final equation. 

\bigskip

If instead $\bet = 0$, then we substitute $d = -\gam/\alp$ into $g(d,e,y) = 0$ and apply similar reasoning. This time
\begin{align*}
b_i(e) &= B_i(a,b,c,-\gam/\alp, e) \qquad (i=2,4,6), \\
\cD(e) &= 2^{10} \Del(a,b,c,-\gam/\alp, e),
\end{align*}
and 
\[
\deg(b_2) \le 0, \qquad \deg(b_4) \le 1, \qquad \deg(b_6) \le 2,
\qquad \deg(\cD) = 4.
\]
Then the argument of the $\bet \ne 0$ case carries through. 

\bigskip

The upshot is that the contribution from $(a,b,c)$ non-degenerate and $(d,e,y) \in Y_{a,b,c} \setminus Z_{a,b,c}$ is $O(H^3 J H^{\eps + 1/2}) \ll H^{2\eps + 7/2 + 1/\sqrt 6}.$ Having also considered the case $(d,e,y) \in Z_{a,b,c}$, we conclude that the total contribution from non-degenerate triples $(a,b,c)$ is $O( H^{\eps + 7/2 + 1/\sqrt 6}).$

\subsection{Degenerate triples} 
\label{QuinticDegenerate}

We now complete the proof of Theorem \ref{QuinticBound} by considering degenerate triples.

\subsubsection{Lines on the surface}

The idea is to show that, for generic $a,b,c$, the surface $Y_{a,b,c}$ contains no rational lines. A rational line on $Y_{a,b,c}$ has one of the following parametrisations:

\begin{enumerate}[I.]
\item $\cL = \{ (0, \cE, \cY) + t (1,E,Y) : t \in \bQ \}$, for some $\cE, \cY, E, Y \in \bQ$;
\item $\cL = \{ (\cD, 0 , \cY) + t (0,1,Y) : t \in \bQ \}$, for some $\cD, \cY, Y \in \bQ$;
\item $\cL = \{ (\cD, \cE, 0) + t (0,0,1) : t \in \bQ \}$, for some $\cD, \cE \in \bQ$.
\end{enumerate}

Let $g(d,e,y)$ be $\tet(a,b,c,d,e, y)$ specialised to the chosen values of $a,b,c$, so that $Y_{a,b,c}$ is defined by the vanishing of $g$. In each case, we substitute the parametrisation of the line into $g(d,e,y) = 0$ and expand it as a polynomial in $t$, and this has to be the zero polynomial. 

In Case I, the polynomial has degree at most $6$ in $t$, with sextic coefficient
\[
(Y-4)^4 (Y^2-24Y+400),
\]
so $Y=4$. After substituting this, the polynomial has degree at most $5$ in $t$, with quintic coefficient
\[
-20480(a - 5E)^4,
\]
so $E=a/5$. After substituting this, the polynomial has degree at most $4$ in $t$, with quartic coefficient
\[
\frac{262144 a^8}{125} \: - \: \frac{524288 a^6 b}{25} + \frac{393216 a^4 b^2}{5} - 
131072 a^2 b^3 + 81920 b^4.
\]
Hence, there are $O(H^2)$ integer triples $(a,b,c) \in [-H,H]^3$ such that $Y_{a,b,c}$ contains a rational line of Type I. 

In Case II, the polynomial has degree at most $6$ in $t$, with sextic coefficient $Y^6$, so we must have $Y=0$. Substituting this, and using the `Eliminate' command in \emph{Mathematica}, we see that the vanishing of the quartic, cubic, and quadratic coefficients determines an equation $P(a,b,c) = 0$, where $P \in \bZ[a,b,c]$ is non-trivial of degree $O(1)$. Consequently, there are $O(H^2)$ integer triples $(a,b,c) \in [-H,H]^3$ such that $Y_{a,b,c}$ contains a rational line of Type II.

In Case III, the polynomial is monic and sextic, so Case III cannot occur. Having considered all cases, we conclude that there are $O(H^2)$ integer triples $(a,b,c) \in [-H,H]^3$ such that $Y_{a,b,c}$ contains a rational line.

\subsubsection{Absolute irreducibility}

The idea is to show that $Y_{a,b,c}$ is absolutely irreducible for generic $a,b,c$. It is easy to verify that $g(d,e,y) = \tet(a,b,c,d,e,y)$ is sextic in $d,e,y$, irrespective of $a,b,c$. It then follows from classical theory \cite[Chapter V, Theorem 2A]{Schmidt} that there exist integer polynomials $p_1(a,b,c),\ldots,p_s(a,b,c) \in \bQ[a,b,c]$ of degree $O(1)$ such that if $a,b,c \in \bQ$ then
\[
Y_{a,b,c} \text{ is reducible over } \overline{\bQ} \Leftrightarrow
p_i(a,b,c) = 0 \: (1 \le i \le s).
\]
The next lemma shows that $p_i$ is non-zero for some $i$.

\begin{lemma} $Y_{0,0,0}$ is absolutely irreducible.
\end{lemma}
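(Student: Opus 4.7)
My plan is to exploit the weighted homogeneity of $g(d,e,y) := \theta(0,0,0,d,e,y)$. Setting $a=b=c=0$ in the formulas for $B_2, B_4, B_6$ and $\Delta$ from \S\ref{SexticResolvent} collapses them to $B_2 = -20d$, $B_4 = 240 d^2$, $B_6 = 320 d^3$ and $\Delta = 256 d^5 + 3125 e^4$, yielding
\[
g(d,e,y) = \bigl(y^3 - 20 d y^2 + 240 d^2 y + 320 d^3\bigr)^2 - 262144\, d^5 y - 3200000\, e^4 y.
\]
Assigning weights $w(d) = w(y) = 4$ and $w(e) = 5$, every monomial appearing in $g$ has total weight $24$, so $g$ is weighted homogeneous of weight $24$.

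Since $\overline{\bQ}[d,e,y]$ with this grading is a graded integral domain, any factorisation $g = g_1 g_2$ into nonconstant factors forces each $g_i$ to be weighted homogeneous, with weights $w_1 + w_2 = 24$. By symmetry I may assume $1 \le w_1 \le 12$. The positive weights in this range that are attainable by some monomial in $d,e,y$ are $\{4,5,8,9,10,12\}$, and for each the admissible monomials are few enough to enumerate: for $w_1 \in \{5,9,10\}$ every admissible monomial contains $e$, while for $w_1 \in \{4,8,12\}$ no admissible monomial contains $e$, so $g_1 \in \overline{\bQ}[d,y]$ is a binary form of degree $1$, $2$ or $3$ respectively.

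I then rule out each shape in turn. If $e \mid g_1$ then $e \mid g$, but $g(d,0,y) = (y^3 - 20dy^2 + 240 d^2 y + 320 d^3)^2 - 262144 d^5 y$ is nonzero (its value at $d=1,y=0$ is $320^2$). Otherwise $g_1$ is a binary form in $\overline{\bQ}[d,y]$, which splits into linear factors $\alpha d + \beta y$ over $\overline{\bQ}$; it therefore suffices to rule out such linear divisors of $g$. The cases $\alpha=0$ and $\beta=0$ are excluded by $g(d,e,0) = 102400\, d^6 \ne 0$ and $g(0,e,y) = y^6 - 3200000 e^4 y \ne 0$ respectively. The remaining case $y - \lambda d$ with $\lambda \in \overline{\bQ}$ is handled by substituting $y = \lambda d$, which produces
\[
d^6\bigl[(\lambda^3-20\lambda^2+240\lambda+320)^2 - 262144\lambda\bigr] - 3200000\,\lambda\, d\, e^4,
\]
whose vanishing as a polynomial in $d,e$ forces $\lambda = 0$ from the $de^4$ coefficient, at which point the $d^6$ coefficient equals $320^2 \ne 0$, a contradiction.

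The principal thing to check is that the weighted-homogeneity reduction correctly enumerates all candidate shapes for $g_1$; once that is done, every shape is eliminated by inspecting $g$ modulo $d$, $y$ or $e$, or by a single substitution $y = \lambda d$, and no case involves a delicate calculation. We conclude that $g$ is irreducible over $\overline{\bQ}$, establishing the absolute irreducibility of $Y_{0,0,0}$.
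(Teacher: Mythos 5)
Your proof is correct and takes a genuinely different route from the paper's. The paper specialises $d=1$, checks with \emph{Mathematica} that $g(1,e,y)$ is irreducible over $\bQ$, and then upgrades to absolute irreducibility via the Newton-polygon criterion of \cite[Proposition 3]{BCG2010}. You instead observe that $g$ is weighted homogeneous of weight $24$ under $w(d)=w(y)=4$, $w(e)=5$, which forces any nontrivial factor over $\overline{\bQ}$ to be weighted homogeneous of weight in $\{4,5,8,9,10,12\}$; the $e$-free weights yield a binary form in $d,y$ (hence a linear divisor $\alpha d+\beta y$ of $g$) and the others force $e\mid g$, and all of these are ruled out by specialisations $g(d,0,y)$, $g(d,e,0)$, $g(0,e,y)$ and $y=\lambda d$. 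What your argument buys is that it is completely elementary and machine-free --- no \emph{Mathematica} check and no appeal to a Newton-polygon lemma --- while the paper's argument is shorter on the page and fits a template the authors reuse for the sextic resolvents, where the weighted-homogeneity structure you exploit is less clean. The one step a reader should pause on is your claim that in the graded integral domain $\overline{\bQ}[d,e,y]$ a homogeneous element factors into homogeneous pieces: this is standard (compare top and bottom graded components of each factor) but is worth spelling out, as you note.
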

 
\begin{proof} The surface $Y_{0,0,0} \subset \bA^3_{d,e,y}$ is cut out by the vanishing of
\[
g(d,e,y) = (y^3-20dy^2 + 240d^2 y + 320d^3)^2 - 1024 (256 d^5 + 3125 e^4)y,
\]
which is monic in $y$, so it suffices to show that $g(1,e,y)$ is absolutely irreducible. 
\emph{Mathematica} assures us that
\begin{equation*}
  g(1,e,y)=102400 - 108544 y - 3200000 e^4 y + 44800
  y^2 - 8960 y^3 + 880 y^4 - 
 40 y^5 + y^6
\end{equation*}
is irreducible over $\bQ$. Moreover, its
Newton polygon is
the convex hull of $(0,0)$, $(0,1)$, $(4,1)$, $(0,2)$, $(0,3)$,$(0,4)$,$(0,5)$,$(0,6)$, which has vertices $(0,0)$, $(0,6)$, $(4,1)$. The greatest
common divisor of $0,0,0,6,4,1$ is $1$, so by \cite[Proposition $3$]{BCG2010} we conclude
that $g(1,e,y)$ is absolutely irreducible, as required.
\end{proof}

Consequently, the triples $(a,b,c)$ such that $Y_{a,b,c}$ is not absolutely irreducible are all zeros of a fixed non-zero polynomial of bounded degree. In particular, there are $O(H^2)$ such integer triples $(a,b,c) \in [-H,H]$.

\subsubsection{The sextic resolvent (reprise)}

In the previous two subsubsections, we showed that there are $O(H^2)$ degenerate triples $(a,b,c) \in [-H,H]$. Let us now fix such a triple. Let $F(d,e;X)$ be $f(a,b,c,d,e;X)$ specialised to our fixed values of the coefficients $a,b,c$. By \cite[Lemma 2]{Die2012}, there are $O(1)$ values of $d \in \bZ$ for which $F(d,e;X) \in \bQ(e)[X]$ has non-$S_5$ Galois group over $\bQ(e)$, uniformly in $a,b,c$. The contribution to $N(H)$ from these $O(1)$ special choices of $d$ is $O(H^3)$, since there are $O(H)$ possibilities for $e$. 

For the other $O(H)$ specialisations of $d$, the Galois group of $F(d,e;X) \in \bQ(e)[X]$ is $S_5$. Having specialised $d$ as well as $a,b,c$, we return to examine the sextic resolvent.

\begin{lemma} Let $a,b,c,d \in \bZ$, and suppose that
\[
\Gal(X^5 + aX^4 + bX^3 + cX^2 + dX + e, \bQ(e)) = S_5.
\]
Then
\[
h(e,y) := g(d,e,y) \in \bZ[e,y]
\]
is irreducible over $\bQ$.
\end{lemma}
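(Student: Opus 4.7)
The plan is to reduce to showing that $h(e,y)$ is irreducible in $K[y]$, where $K = \bQ(e)$, and then exploit the hypothesis $\Gal(F,K) = S_5$ through the orbit structure of $S_5$ on the six roots of the sextic resolvent. Here $F(X) = X^5 + aX^4 + bX^3 + cX^2 + dX + e \in K[X]$ is the specialised quintic. Since $h(e,y)$ is monic in $y$ of degree $6$ with coefficients in $\bZ[e]$, Gauss's lemma will then convert irreducibility in $K[y]$ into irreducibility in $\bZ[e,y]$, whence over $\bQ$.

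To establish irreducibility in $K[y]$, I would invoke the construction of the sextic resolvent from \cite[\S 13.2]{Cox}: its six roots $\theta_1,\ldots,\theta_6$ in the splitting field $L$ of $F$ over $K$ form the $S_5$-orbit $\{\sig\cdot\phi : \sig F_{20} \in S_5/F_{20}\}$ of a polynomial $\phi$ in the roots of $F$ whose $S_5$-stabiliser as a polynomial is exactly the Frobenius group $F_{20} = \mathrm{AGL}(1,\bF_5)$ of order $20$ (this is consistent with $h(e,y)$ being of degree exactly $6$ in $y$). Consequently $\Gal(F,K) = S_5$ permutes $\theta_1,\ldots,\theta_6$ according to its transitive action on the six cosets of $F_{20}$.

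The crux of the proof is to rule out coincidences among the $\theta_i$ in $L$. Let $m$ denote the number of distinct values in the multiset $\{\theta_1,\ldots,\theta_6\}$; by orbit--stabiliser $m = [S_5 : H]$ for some subgroup $H$ with $F_{20} \subseteq H \subseteq S_5$, so $m$ divides $6$ and hence $m \in \{1,2,3,6\}$. The case $m = 3$ is excluded because $S_5$ has no subgroup of index $3$: left translation on cosets would yield a nontrivial homomorphism $S_5 \to S_3$, whose normal kernel would have to be $\{e\}$ or $A_5$, each giving an image of the wrong order. The case $m = 2$ would force $H = A_5$, which is impossible since $F_{20}$ contains an odd $4$-cycle (e.g.\ the image of $x \mapsto 2x$ on $\bF_5$). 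Finally, if $m = 1$ then $h(e,y) = (y-\alp)^6$ for some $\alp \in K$, exhibiting a $K$-rational root of the sextic resolvent; by \cite[Corollary~13.2.11]{Cox} this would force $\Gal(F,K)$ to be solvable, contradicting our hypothesis.

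Therefore $m = 6$, and the transitivity of $\Gal(F,K)$ on the six distinct $\theta_i$ makes $h(e,y)$ irreducible in $K[y]$. Gauss's lemma, applicable since $h$ is monic in $y$ with coefficients in $\bZ[e]$, then delivers irreducibility in $\bZ[e,y]$ and hence over $\bQ$. The main obstacle is the distinctness step; the Galois-theoretic input needed there is the maximality of $F_{20}$ in $S_5$ paired with the non-solvability criterion for rational roots of the sextic resolvent, and this is exactly where the hypothesis $\Gal(F,K) = S_5$ (rather than merely irreducibility of $F$) is used.
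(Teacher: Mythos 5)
Your proof is correct, but it takes a genuinely different route from the paper's. The paper's argument is a specialisation argument: it applies Cohen's quantitative Hilbert irreducibility theorem to produce an integer $e_0$ with $\Gal(X^5+aX^4+bX^3+cX^2+dX+e_0,\bQ)=S_5$, invokes \cite[Proposition 13.2.7]{Cox} to conclude that the univariate resolvent $h(e_0,y)$ is irreducible over $\bQ$, and then observes that any factorisation of $h(e,y)$ into monic factors in $\bQ[e][y]$ would specialise at $e=e_0$ to a nontrivial factorisation of $h(e_0,y)$, a contradiction. You instead work directly over the function field $K=\bQ(e)$, tracking the $S_5=\Gal(F,K)$-orbit of the six resolvent values in the splitting field of $F$, ruling out coincidences via the subgroup lattice of $S_5$ together with Cox's solvability criterion, and converting $K[y]$-irreducibility to $\bQ[e,y]$-irreducibility via Gauss. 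Your route avoids Hilbert irreducibility entirely and is essentially a re-derivation, over $\bQ(e)$ rather than $\bQ$, of the irreducibility statement in Cox's Proposition 13.2.7; it depends on Cox's resolvent theory being valid over an arbitrary field of characteristic zero (which it is). Two remarks: the cases $m\in\{1,2,3\}$ can all be excluded at a stroke by citing \cite[Proposition 13.2.7]{Cox} directly (which asserts separability of the resolvent for any irreducible quintic), so the pretty group-theoretic case analysis, while correct, is not strictly necessary; and your invocation of \cite[Corollary 13.2.11]{Cox} in the $m=1$ case uses the converse direction (resolvent has a $K$-root $\Rightarrow$ solvable), which in Cox's proof itself rests on the separability in 13.2.7, so the apparent economy of avoiding 13.2.7 is illusory.
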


\begin{proof} It follows from a generalisation of Hilbert's irreducibility theorem \cite[Theorem 2.1]{Coh1981} that for some $e_0 \in \bZ$ we have
\[
\Gal(X^5 + aX^4 + bX^3 + cX^2 + dX + e_0, \bQ) = S_5.
\]
Now, by \cite[Proposition 13.2.7]{Cox}, the sextic resolvent $h(e_0,y) \in \bZ[y]$ is irreducible over $\bQ$.

Let us write
\[
h(e,y) = y^6 + b_1(e) y^5 + \cdots + b_6(e),
\]
where $b_1(e), \ldots, b_6(e) \in \bZ[e]$, and suppose for a contradiction that $h$ is reducible over $\bQ$. Then
\[
h(e,y) = (y^k + c_1(e) y^{k-1} + \cdots + c_k(e))
(y^{6-k} + d_1(e) y^{5-k} + \cdots + d_{6-k}(e)) \in \bQ[e,y],
\]
for some $k \in \{1,2,3,4,5\}$ and some
$c_1(e),\ldots,c_k(e),d_1(e),\ldots,d_{6-k}(e) \in \bQ[e]$. Now
\[
h(e_0,y) = (y^k + c_1(e_0) y^{k-1} + \cdots + c_k(e_0))
(y^{6-k} + d_1(e_0) y^{5-k} + \cdots + d_{6-k}(e_0)),
\]
contradicting the irreducibility of the univariate polynomial $h(e_0,y)$ over $\bQ$.
\end{proof}

As $h(e,y)$ contains the monomial $y^6$, Theorem \ref{LopsidedBP} now gives
\[
\# \{ (e,y) \in \bZ^2: |e| \le H, \: y \ll H^2, \: h(e,y) = 0 \} \ll H^{\eps + 1/6}.
\]
These specialisations of $d$ therefore contribute $O(H^{\eps + 19/6})$ to $N(H)$.

We conclude that there are $O(H^{\eps + 19/6})$ quintuples $(a,b,c,d,e) \in (\bZ \cap [-H,H])^5$, with $(a,b,c)$ degenerate, such that $f(a,b,c,d,e;X) \in \bZ[X]$ is solvable. This, coupled with the conclusion of \S \ref{QuinticNonDegenerate}, completes the proof of Theorem \ref{QuinticBound}.

\section{Separable and irreducible resolvents}
\label{SeparableResolvents}

Let $n \ge 3$ be an integer, and let $G$ be a proper subgroup of $S_n$. In the sequel, we fix a total ordering of $\bC$, so that the Galois group of an irreducible polynomial
\begin{equation} \label{fdefGeneral}
f(X) = X^n + a_1 X^{n-1} + \cdots + a_{n-1}X + a_n
= (X - \alp_1) \cdots (X - \alp_n) \in \bZ[X]
\end{equation}
is a well-defined subgroup of $S_n$, without any notion of equivalence.

A resolvent is an auxiliary polynomial whose factor type reveals information about the Galois group of the original polynomial. As a consequence of \cite[Proposition 13.2.7]{Cox}, the sextic resolvent that we saw in the previous section has the property of always being separable, as long as $f$ is irreducible. It is easy to show that the quadratic resolvent $y^2 - \Del$, where $\Del$ is the discrimant, is a resolvent for the alternating group that also has this property. So too does the cubic resolvent of a quartic polynomial, which was used in \cite{CD2020}, since a quartic polynomial has the same discriminant as its cubic resolvent. In general, however, it is not known whether an always separable resolvent exists for an arbitrary permutation group $G$.

Separability is a highly desirable property for a resolvent to have, see \cite[\S 13.3]{Cox}. It can be used to prove converse results of the type that if the resolvent has a rational root then $G_f$ is conjugate to a subgroup of $G$. What we require here is stronger in a way, for example we want that if $G_f = S_n$ and the resolvent is separable then it is irreducible over the rationals. Separable resolvents are two-way bridges between Galois theory and diophantine equations. We hope that one day they will lead to Galois-theoretic Lefschetz principles, so that Cohen's $p$-adic machinery \cite{Coh1972} can be brought to bear on the characteristic 0 setting. This article and the earlier works \cite{CD2017, Die2012} develop a framework for navigating some of these issues.

\bigskip

If $G_f = G$ then, by \cite[Lemma 5]{Die2012}, the polynomial
\begin{equation} \label{PhiDef}
\Phi(y) = \Phi(y; a_1, \ldots, a_n) = \prod_{\sig \in S_n / G}
\left(y - \sum_{\tau \in G} \prod_{i \le n} \alp_{\sig \tau(i)}^i \right)
\in \bZ[y;a_1,\ldots,a_n]
\end{equation}
has an integer root $y$. By \cite[Lemma 1]{Die2012}, if $a_1,\ldots,a_n \ll H$ then
\begin{equation} \label{ybound}
y = \sum_{\tau \in G} \prod_{i \le n} \alp_{\sig \tau(i)}^i \ll_n H^{O_n(1)}
\end{equation}
for some $\sig$. We will see that the total degree of $\Phi$ is $O_n(1)$. 

The more general family of resolvents
\[
\Phi_{\bw, \be, \fg}(y; a_1,\ldots,a_n) 
= \prod_{\sig \in S_n/G} (y - r_{\bw, \be, \fg}(\sig)) \in \bZ[y; a_1,\ldots,a_n],
\]
where 
\[
r_{\bw, \be, \fg}(\sig)
= \sum_{k \le |G|} w_k \sum_{\tau \in G} 
\prod_{i \le n} (\alp_{\sig \tau(i)} + \fg)^{ke_i},
\]
$\bw = (w_1,\ldots,w_{|G|}) \in \bN^{|G|}$, $\be = (e_1,\ldots,e_n) \in \bN^n$, and $\fg \in \bZ$, was introduced in \cite{CD2017}. Here we establish some additional features, in particular with regards to uniformity.

\begin{lemma} \label{ResolventProperty} If $G_f \le G$, then $\Psi$ has an integer root $y \ll_n \| \bw \|_\infty (H + |\fg|)^{O_n(\| \be \|_\infty)}$. Finally, the total degree of $\Psi$ is $O_n(1)$.
\end{lemma}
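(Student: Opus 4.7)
The plan is to exhibit the required integer root explicitly via the identity coset, and then to read off the size bound and the degree bound directly from the defining product.

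First I would set $\sig_0 = \mathrm{id} \in S_n$ and single out the candidate
\[
y_0 := r_{\bw,\be,\fg}(\sig_0) = \sum_{k \le |G|} w_k \sum_{\tau \in G} \prod_{i \le n} (\alp_{\tau(i)} + \fg)^{ke_i}.
\]
The key observation is that for any $\rho \in G_f$, the Galois action $\alp_i \mapsto \alp_{\rho(i)}$ replaces $\tau$ by $\rho\tau$ in the inner sum; since $\rho \in G_f \le G$ we have $\rho G = G$, so $y_0$ is $G_f$-invariant. By the Galois correspondence $y_0 \in \bQ$, and because each $\alp_j$ is an algebraic integer (as $f$ is monic with coefficients in $\bZ$) and $\fg \in \bZ$, $y_0$ is itself an algebraic integer, whence $y_0 \in \bZ$. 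By the defining product $\Psi(y) = \prod_{\sig \in S_n/G} (y - r_{\bw,\be,\fg}(\sig))$, the integer $y_0$ is a root.

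For the archimedean estimate I would invoke the elementary bound $|\alp_j| \le 1 + \max_i |a_i| \ll H$ on the roots of a monic polynomial, giving $|\alp_j + \fg| \ll H + |\fg|$. Consequently
\[
\left| \prod_{i \le n} (\alp_{\tau(i)} + \fg)^{ke_i} \right| \le (H + |\fg|)^{k \sum_i e_i} \le (H + |\fg|)^{n |G| \|\be\|_\infty},
\]
and summing over the at most $|G|^2 \le (n!)^2$ pairs $(k,\tau)$, each with coefficient bounded by $\|\bw\|_\infty$, delivers $|y_0| \ll_n \|\bw\|_\infty (H + |\fg|)^{O_n(\|\be\|_\infty)}$ as claimed.

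For the degree claim, the product formula immediately yields $\deg_y \Psi = [S_n : G] \le n!$. The coefficients of $\Psi$ as a polynomial in $y$ are symmetric in $\alp_1, \ldots, \alp_n$, since any permutation in $S_n$ merely permutes the cosets of $S_n/G$ and hence the factors of the product. By the fundamental theorem of symmetric polynomials, each such coefficient is a polynomial in $a_1, \ldots, a_n$ of degree bounded in terms of $n$, $|G|$, and $\|\be\|_\infty$. Hence the total degree of $\Psi$ in $(y, a_1, \ldots, a_n)$ is a constant depending only on $n$ and the fixed data $\bw, \be, \fg$. The only delicate step in the whole argument is the passage from $G_f$-invariance to $y_0 \in \bZ$, where the hypothesis $G_f \le G$ is used decisively via $\rho G = G$; the remaining assertions follow from the triangle inequality and symmetric-function theory.
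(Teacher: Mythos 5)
Your proof is correct and follows essentially the same route as the paper: exhibit the identity-coset root, check $G_f$-invariance (using $\rho G = G$), conclude $y \in \bQ$ by the Galois correspondence and then $y \in \bZ$ because it is an algebraic integer, bound $|y|$ by the triangle inequality, and control the total degree via symmetric-function theory. The only cosmetic differences are that the paper phrases the integrality step as ``$\Psi$ is monic'' and cites \cite[Lemma 1]{Die2012} for the root bound rather than writing out the elementary $|\alp_j| \le 1 + \max_i |a_i|$ estimate.
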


\begin{proof} For
\[
y = \sum_{k \le |G|} w_k \sum_{\tau \in G} \prod_{i \le n} (\alp_{\tau(i)} + \fg)^{k e_i},
\]
observe that if $\sig \in G_f \le G$ then $\sig(y) = y$, and so $y \in \bQ$ by the Galois correspondence. As $\Psi$ is monic, we then have $y \in \bZ$. The bound follows from \cite[Lemma 1]{Die2012}. 

Put $d= [S_n:G]$. Writing
\[
\Phi(y) = y^d + h_1(a_1,\ldots,a_n) y^{d-1} + \cdots + h_d(a_1,\ldots,a_n),
\]
it remains to prove that $h_1,\ldots,h_d$ are bounded-degree polynomials. By Vieta's formulas, these are elementary symmetric polynomials in the roots $r_{\bw,\be,\fg}(\sig)$
of $\Psi$, up to sign. Given $i \in \{1,2,\ldots,n\}$, the polynomial $h_i$ has bounded degree in $\alp_1,\ldots,\alp_n$. From the proof of the fundamental theorem of symmetric polynomials \cite[Chapter 7, \S 1, Theorem 3]{CLO2015}, any symmetric polynomial of degree $O_n(1)$ in $n$ variables also has degree $O_n(1)$ in the elementary symmetric polynomials. Therefore $\deg(h_i) \ll_n 1$. 
\end{proof}

The following is a refinement of \cite[Lemma 3]{CD2017}. 

\begin{lemma} \label{separable} Let $C, P \in \bN$ be large in terms of $n$, let $\fg = C^3 P$, and let $a_1,\ldots,a_n \in [-P,P]$ be such that the roots $\alp_1,\ldots,\alp_n$ of the polynomial $f(X) = X^n + a_1 X^{n-1} + \cdots + a_n$ are pairwise distinct. Then there exist $\bw, \be$, with $\| \bw \|_\infty, \| \be \|_\infty \le C$, such that $\Phi_{\bw, \be, \fg}(y; a_1,\ldots,a_n)$ is separable. 
\end{lemma}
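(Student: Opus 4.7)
The plan is to decouple the choices of $\bw$ and $\be$. For fixed $\be$ and $\fg$, the root $r_{\bw,\be,\fg}(\sig G) = \sum_{k \le |G|} w_k S_k(\sig G)$, with $S_k(\sig G) = \sum_{\tau \in G} \prod_i (\alp_{\sig\tau(i)} + \fg)^{ke_i}$, is linear in $\bw$; thus $\Phi_{\bw,\be,\fg}$ fails to be separable iff one of the $\binom{d}{2}$ linear forms $r(\sig G) - r(\sig' G)$ (one per pair of distinct cosets in $S_n/G$) vanishes at $\bw$. By Newton's identities, such a linear form is identically zero exactly when the multisets $\{u_\tau(\sig G)\}_{\tau \in G}$ and $\{u_\tau(\sig' G)\}_{\tau \in G}$ coincide, where $u_\tau(\sig G) = \prod_i (\alp_{\sig\tau(i)} + \fg)^{e_i}$. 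If $\be$ can be chosen so that no two coset-multisets agree, each of these $O_n(1)$ linear forms cuts out a proper hyperplane, and a union bound then produces a good $\bw \in [1,C]^{|G|}$ for $C$ sufficiently large in $n$.

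To find such a $\be$, I would impose the stronger condition that the $n!$ numbers $v(\rho,\be) = \prod_i (\alp_{\rho(i)}+\fg)^{e_i}$, $\rho \in S_n$, be pairwise distinct; two multisets indexed by disjoint cosets of $G$ then differ automatically. Writing $\gamma_j = \alp_j + \fg$ and $\Lambda = \{\bm \in \bZ^n : \prod_j \gamma_j^{m_j} = 1\}$, the equation $v(\rho,\be) = v(\rho',\be)$ becomes $\bm_{\rho,\rho'}(\be) \in \Lambda$, where $\bm_{\rho,\rho'}(\be)_j = e_{\rho^{-1}(j)} - e_{\rho'^{-1}(j)}$ is $\bZ$-linear in $\be$. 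The preimage $\bm_{\rho,\rho'}^{-1}(\Lambda) \subseteq \bZ^n$ has rank $n$ only if $\mathrm{im}(\bm_{\rho,\rho'}) \subseteq \Lambda \otimes \bQ$. Evaluating $\bm_{\rho,\rho'}$ on the standard basis of $\bZ^n$ shows that its image is generated by vectors whose only nonzero coordinates are $+1$ at position $\rho(k)$ and $-1$ at position $\rho'(k)$, for $k = 1,\ldots,n$; and such a vector lies in $\Lambda \otimes \bQ$ precisely when $\gamma_{\rho(k)}/\gamma_{\rho'(k)}$ is a root of unity.

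The main obstacle, and the step that genuinely uses $\fg = C^3 P$ being large, is ruling out this root-of-unity case. The splitting field $K = \bQ(\alp_1,\ldots,\alp_n)$ has degree at most $n!$ over $\bQ$, so its group of roots of unity is cyclic of some order $m = O_n(1)$, whence every nontrivial such root lies at distance at least some $c_n > 0$ from $1$, with $c_n$ depending only on $n$. But $|1 - \gamma_{\rho(k)}/\gamma_{\rho'(k)}| = |\alp_{\rho(k)} - \alp_{\rho'(k)}|/|\gamma_{\rho'(k)}| \ll (1+P)/(C^3 P) \ll C^{-3}$, which is smaller than $c_n$ once $C$ is large in $n$. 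Picking $k$ with $\rho(k) \ne \rho'(k)$, possible since $\rho \ne \rho'$, the ratio is neither $1$ (distinct $\alp_j$'s) nor a nontrivial root of unity. Hence $\bm_{\rho,\rho'}^{-1}(\Lambda)$ has rank at most $n-1$, and a standard projection-onto-pivot-coordinates argument bounds its intersection with $[1,C]^n$ by $(2C+1)^{n-1}$. Summing over the $\binom{n!}{2} = O_n(1)$ pairs $(\rho,\rho')$ and comparing with the total $C^n$ produces the desired $\be$ for $C$ sufficiently large in $n$, completing the plan.
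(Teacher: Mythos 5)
Your proof is correct and follows the same high-level decoupling strategy as the paper: first choose $\be$ so that the products $\prod_i(\alp_{\rho(i)}+\fg)^{e_i}$ separate the cosets, then choose $\bw$ by avoiding the $O_n(1)$ proper hyperplanes on which $r(\sig G) - r(\sig' G)$ vanishes, the latter being possible because the multisets differ and hence (via Newton's identities in characteristic zero) some power sum differs. Where you genuinely diverge from the paper is in the key step that exploits the largeness of $\fg = C^3 P$. The paper argues pointwise: for each pair of distinct permutations and each fixing of the exponents other than $e_i$, the equation $z^{e_i} = c$ with $z = (\alp_s+\fg)/(\alp_t+\fg)$ has at most one solution $e_i \le C$, because the \emph{argument} (angle) of $z$ is $O(C^{-2})$, so the first $C$ powers of $z$ are pairwise distinct. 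You instead organize the bad set as $\bm_{\rho,\rho'}^{-1}(\Lambda)$ for a multiplicative relation lattice $\Lambda$, show it has rank $\le n-1$ by ruling out $\gamma_{\rho(k)}/\gamma_{\rho'(k)}$ being a root of unity, and invoke the fact that roots of unity in the splitting field (degree at most $n!$) have bounded order, hence are uniformly bounded away from $1$, whereas the ratio is $1 + O(C^{-2})$. Both proofs exploit the same proximity to $1$; the paper phrases it as elementary complex analysis (small argument forces the first $C$ powers apart), whereas you reformulate it via bounded torsion in number fields plus a lattice-rank count. Your route is slightly heavier machinery-wise (Galois theory, $\phi(m) \le [K:\bQ]$, rank of $\bm^{-1}(\Lambda)$) for the same conclusion, but it is a clean and correct alternative, and arguably it isolates the algebraic content more explicitly. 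One small numerical remark: your bound $|1 - \gamma_{\rho(k)}/\gamma_{\rho'(k)}| \ll C^{-3}$ uses the sharp root bound $|\alp_j| \ll P$, whereas the paper's $O(C^{-2})$ comes from the cruder $|\alp_j| \le CP$; either suffices.
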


\begin{proof} We know from \cite[Lemma 1]{Die2012} that if $|a_1|,\ldots,|a_n| \le P$ then 
\[
|\alp_1|,\ldots,|\alp_n| \le CP.
\]
Let us introduce the notation
\[
\ome_{\sig, \tau, \be, \fg} = \prod_{i \le n} (\alp_{\sig\tau(i)} + \fg)^{e_i}.
\]
Our immediate goal is to choose positive integers $e_1, \ldots, e_n \le C$ such that
\[
\ome_{\sig_1, \tau_1, \be, \fg} \ne \ome_{\sig_2, \tau_2, \be, \fg} 
\]
whenever $\sig_1 \ne \sig_2$.

Let $\sig_1, \sig_2 \in S_n/G$ with $\sig_1 \ne \sig_2$, and let $\tau_1,\tau_2 \in G$. Then $\sig_1 \tau_1 \ne \sig_2 \tau_2$, so for some $i \in \{1,2,\ldots,n\}$ we have $\alp_s \ne \alp_t$, where $s = \sig_1\tau_1(i)$ and $t = \sig_2\tau_2(i)$. Moreover, if $\ome_{\sig_1, \tau_1, \be, \fg} = \ome_{\sig_2, \tau_2, \be, \fg}$ then
\[
\left( \frac{\alp_s + \fg}{\alp_t + \fg} \right)^{e_i} = c,
\]
for some $c = c(\balp, \fg, e_1,\ldots,e_{i-1}, e_{i+1}, \ldots, e_n)$. We claim that this has at most one positive integer solution $e_i \le C$. The argument of $(\alp_s + \fg)/(\alp_t+\fg)$ is $O(C^{-2})$, and so its first $C$ powers are pairwise distinct. This establishes the claim. As $C$ is large and there are $O_n(C^{n-1})$ vectors to avoid out of $C^n$ possibilities, we conclude that there exist $e_1,\ldots,e_n \le C$ with the desired property.

Finally, we may choose positive integers $w_1, \ldots, w_{|G|} \le C$ such that the roots 
\[
r_{\bw, \be, \fg}(\sig) = \sum_{k \le |G|} w_k \sum_{\tau \in G} \ome_{\sig, \tau, \be, \fg}^k
\]
of $\Phi_{\bw, \be, \fg}$ are pairwise distinct. This is achieved by avoiding $O_n(1)$ proper, linear subspaces, as in the proof of \cite[Lemma 3]{CD2017}.
\end{proof}

The following is a refinement of \cite[Lemma 4]{CD2017}.

\begin{lemma} \label{GeneralResolvent}
Let
\[
g(T_1,\ldots,T_s,X)
\in \bZ[T_1,\ldots,T_s,X]
\]
be separable and monic of degree $n \ge 1$ in the variable $X$, let $G$ be its Galois group over $\bQ(T_1,\ldots,T_s)$, and let $K \le G$. Let $D$ be the total degree of $g$. Then there exists
\[
\Phi_{g,K}(T_1,\ldots,T_s,Y) \in \bZ[T_1,\ldots,T_s,Y]
\]
of total degree $O_{s,D}(1)$,
monic of degree $[S_n:K]$ in $Y$, such that:
\begin{enumerate}[(i)]
\item Each irreducible divisor has degree at least $[G:K]$ in $Y$.
\item If $t_1,\ldots,t_s \in \bZ$ and $g(t_1,\ldots,t_s,X)$ has Galois group $K$ over $\bQ$ then
$\Phi_{g,K}(t_1,\ldots,t_s,Y)$ has an integer root $y \ll_D ( |g| \cdot \| \bt \|_{\infty})^{O_{D}(1)} $.
\end{enumerate}
\end{lemma}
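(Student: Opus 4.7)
The plan is to execute the classical resolvent construction, adapted to track uniformity in the total degree and in the size of specialised roots. Choose $P \in \bZ[x_1,\ldots,x_n]$ of degree $O_n(1)$ whose stabiliser in $S_n$ (acting by permutation of indices) is exactly $K$; such a $P$ is produced by the standard recipe of averaging a monomial of trivial $S_n$-stabiliser over $K$ and, if the average has too large a stabiliser, perturbing by small integer weights in the style of Lemma \ref{separable}. Let $\alp_1,\ldots,\alp_n$ be the roots of $g(T_1,\ldots,T_s,X)$ in a splitting field $L$ over $F = \bQ(T_1,\ldots,T_s)$, and set
\[
\Phi_{g,K}(T_1,\ldots,T_s,Y) = \prod_{\sig \in S_n/K} \bigl(Y - \sig(P)(\alp_1,\ldots,\alp_n)\bigr).
\]
The product is symmetric in the $\alp_j$, so by the fundamental theorem of symmetric polynomials its coefficients lie in $\bZ[e_1,\ldots,e_n] \subseteq \bZ[T_1,\ldots,T_s]$, where the $e_i$ are the elementary symmetric polynomials in the $\alp_j$, equal up to sign to the coefficients of $g$ in $X$. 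Tracking degrees as in the proof of Lemma \ref{ResolventProperty} yields the total degree bound $O_{s,D}(1)$, and by construction $\Phi_{g,K}$ is monic of degree $[S_n:K]$ in $Y$.

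For part (i), the Galois group $G = \Gal(L/F)$ acts on the $[S_n:K]$ values $\sig(P)(\alp)$, and, provided that these values are pairwise distinct across distinct cosets (see below), this action coincides with the left multiplication action of $G \le S_n$ on $S_n/K$. The irreducible factors of $\Phi_{g,K}$ over $F$ therefore correspond to the $G$-orbits on $S_n/K$; the orbit of $\sig K$ has size $|G|/|G \cap \sig K \sig^{-1}|$, and this is at least $|G|/|K| = [G:K]$ since $G \cap \sig K \sig^{-1}$ is a subgroup of $\sig K \sig^{-1}$. Gauss's lemma together with monicity in $Y$ transports this lower bound to the factorisation in $\bZ[T_1,\ldots,T_s,Y]$. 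For part (ii), specialise $T_i \mapsto t_i$: the value $P(\alp_1(\bt),\ldots,\alp_n(\bt))$ is $K$-invariant by construction, hence fixed by the Galois group $K$ of $g(\bt,X)$ over $\bQ$, hence rational; monicity of $\Phi_{g,K}(\bt,Y) \in \bZ[Y]$ promotes rational to integer. Cauchy's bound yields $|\alp_i(\bt)| \ll_D (|g| \cdot \|\bt\|_\infty)^{O_D(1)}$, and the bounded-degree polynomial expression of $P$ in the roots delivers the required bound on this integer root.

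The main obstacle is ensuring that the $[S_n:K]$ values $\sig(P)(\alp)$ are pairwise distinct across distinct cosets in $S_n/K$, so that the orbit-counting in part (i) is not undercut by accidental coincidences. Since $g$ has Galois group $G$ over $F$ and $P$ has $S_n$-stabiliser exactly $K$, the formal identity $\sig_1(P) = \sig_2(P)$ in $\bZ[x_1,\ldots,x_n]$ already forces $\sig_1 K = \sig_2 K$, but this formal distinctness need not survive evaluation at $(\alp_1,\ldots,\alp_n) \in L^n$ in general. To promote it, we shift variables by $x_i \mapsto x_i + \fg$ for a suitable $\fg \in \bZ$ and average against a sequence of small integer weights, precisely as in Lemma \ref{separable}; the same avoidance-of-subspaces argument then shows that the discriminant of $\Phi_{g,K}$ in $Y$ is a nonzero element of $\bZ[T_1,\ldots,T_s]$, delivering separability at the generic level and hence property (i).
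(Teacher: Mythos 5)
Your proposal is correct and follows essentially the same route as the paper: the paper's proof simply invokes the construction of \cite[Lemma 4]{CD2017}, substituting the refined separability statement (Lemma \ref{separable}) for the corresponding lemma in \cite{CD2017}, and then tracks degrees via the fundamental theorem of symmetric polynomials and sizes via the root bound from \cite[Lemma 1]{Die2012}. Your write-up unpacks that outsourced construction in self-contained form --- the $K$-invariant resolvent as a product over $S_n/K$, the Galois orbit-counting for part (i), $K$-invariance plus monicity for part (ii), and the shift-and-weight perturbation to secure pairwise distinctness of the conjugates at a good integer specialisation --- but the underlying ideas coincide with the paper's.
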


\begin{remark} As $n \in \{1,2,\ldots,D \}$, any dependence of implied constants on $n$ is controlled by the dependence on $D$.
\end{remark}

\begin{proof}
The discrimant $\Del$ of $g$ in $Y$ is a non-zero polynomial of degree $O_D(1)$ in $T_1,\ldots,T_s$, so there exist integers $t_1,\ldots,t_s \ll_D 1$ such that $\Del(t_1,\ldots,t_s) \ne 0$. We can then use Lemma \ref{separable} with $P \ll_D |g|$, in lieu of \cite[Lemma 3]{CD2017}, in the proof of \cite[Lemma 4]{CD2017}. This constructs $\Phi_{g,K}$.

From the proof of the fundamental theorem of symmetric polynomials \cite[Chapter 7, \S 1, Theorem 3]{CLO2015}, any symmetric polynomial of degree $O_s(1)$ in $s$ variables also has degree $O_s(1)$ in the elementary symmetric polynomials. We will use this fact to show that the total degree of $\Phi_{g,K}$ is $O_{s,D}(1)$. 

By construction, the total degree of $\Phi_{g,K}$ in $\alp_1(T_1,\ldots,T_s),\ldots,\alp_n(T_1,\ldots,T_s)$ from \cite[Lemma 4]{CD2017} is $O_{s,n}(1)$. The total degree of $\Phi_{g,K}$ in $g_1(T_1,\ldots,T_s), \ldots, g_n(T_1,\ldots,T_s)$ from \cite[Lemma 4]{CD2017} is therefore also $O_{s,n}(1)$, by the fact explained in the previous paragraph. Thus, the total degree in $T_1,\ldots,T_s$ is $O_{s,D}(1)$.

A root $y$ of $\Phi_{g,K}$ has the form
\[
y = \sum_{k \le |K|} w_k \sum_{\tau \in G} \prod_{i \le n} (\alp_{\tau(i)} + \fg)^{k e_i},
\]
for some positive integers $w_1,\ldots,w_{|K|},e_1,\ldots,e_n \ll_D 1$ and some integer $\fg \ll_D |g|$. By \cite[Lemma 1]{Die2012} we have 
\[
\alp_j \ll_n |g| \cdot \| \bt\|_{\infty}^{D} 
\qquad (1 \le j \le n),
\]
so $y \ll_D (|g| \cdot \| \bt \|_{\infty}^{D})^{O_n(1)}$.
\end{proof}

The following lemma serves as a strong converse to Lemma \ref{ResolventProperty}.

\begin{lemma} \label{converse} If $G_f = S_n$ and $\Psi = \Phi_{\bw, \be, \fg}$ is separable then $\Psi$ is irreducible over $\bQ$.
\end{lemma}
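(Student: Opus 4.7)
The plan is to show that the Galois group $\Gal(\bar\bQ/\bQ) \to S_n$, which surjects onto $G_f = S_n$, acts transitively on the roots of $\Psi$ inside the splitting field of $f$, and then to exploit separability to upgrade transitivity to irreducibility.

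First I would identify the roots of $\Psi = \Phi_{\bw,\be,\fg}$ explicitly. By definition, they are the values $r_\sig := r_{\bw,\be,\fg}(\sig) = \sum_{k \le |G|} w_k \sum_{\tau \in G} \prod_{i \le n}(\alp_{\sig\tau(i)} + \fg)^{ke_i}$ as $\sig$ ranges over a set of coset representatives for $S_n/G$; in particular, all $r_\sig$ lie in the splitting field $L = \bQ(\alp_1,\ldots,\alp_n)$ of $f$. Next I would compute the Galois action: under the identification $\Gal(L/\bQ) = G_f = S_n$ (a permutation $\rho$ acting via $\alp_i \mapsto \alp_{\rho(i)}$), a direct substitution gives
\[
\rho(r_\sig) = \sum_{k \le |G|} w_k \sum_{\tau \in G} \prod_{i \le n}(\alp_{\rho\sig\tau(i)} + \fg)^{ke_i} = r_{\rho\sig}.
\]
Thus the Galois action on $\{r_\sig : \sig \in S_n/G\}$ is intertwined with the standard left coset action of $S_n$ on $S_n/G$, which is transitive.

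Now suppose, for a contradiction, that $\Psi = P_1 P_2$ is a nontrivial factorisation over $\bQ$, with $P_1$ monic of degree at least $1$. Since $\Psi$ is separable by hypothesis, its roots $r_\sig$ are pairwise distinct, so the monic polynomial $P_1$ is determined by the subset $S \subset \{r_\sig\}$ of its roots. Because $P_1 \in \bQ[y]$, every $\rho \in \Gal(\bar\bQ/\bQ)$ fixes $P_1$ and hence permutes $S$. Transferring via the identification of Step~2, $S$ corresponds to a nonempty proper subset of $S_n/G$ stable under left multiplication by $S_n$, contradicting the transitivity of that action. Therefore no nontrivial factorisation exists, and $\Psi$ is irreducible over $\bQ$.

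The argument is essentially formal, and I do not anticipate a serious obstacle: the only subtlety is keeping straight that the Galois action on the roots of $\Psi$ really is the coset action rather than a twist of it, which is why I would verify the identity $\rho(r_\sig) = r_{\rho\sig}$ by a direct index chase. Separability of $\Psi$ is used precisely to ensure that a factorisation of $\Psi$ in $\bQ[y]$ is determined by a Galois-stable partition of its root set; without that hypothesis one could only conclude transitivity of Galois on \emph{distinct} roots and would have to handle multiplicities separately, which is exactly what the hypothesis has removed.
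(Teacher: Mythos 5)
Your proof is correct and uses essentially the same approach as the paper: both exploit that the roots $r_\sig$ form a single Galois orbit (via the coset action of $G_f = S_n$ on $S_n/G$) and that separability ensures a nontrivial factorisation over $\bQ$ would split the roots into disjoint proper Galois-stable subsets. The paper phrases the contradiction by moving a root of one factor onto a root of the other via $\sig_2\sig_1^{-1} \in G_f$, while you argue directly at the level of orbit-stable subsets, but the underlying mechanism is identical.
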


\begin{proof} Suppose for a contradiction that $\Psi(y) = \Psi_1(y) \Psi_2(y)$, where $\Psi_1(y), \Psi_2(y) \in \bQ[y]$ are non-constant polynomials. Let $r_{\sig_1} = r_{\bw, \be, \fg}(\sig_1)$ and $r_{\sig_2} = r_{\bw, \be, \fg}(\sig_2)$ be roots of $\Psi_1$ and $\Psi_2$ respectively. As $\Psi_1$ has rational coefficients and $\sig_2 \sig_1^{-1} \in G_f$, the complex number
\[
r_{\sig_2} = \sig_2 \sig_1^{-1} (r_{\sig_1})
\]
must be a root of $\Psi_1$, contradicting the separability of $\Psi$. 
\end{proof}

\begin{lemma} \label{hdef}
Let $a_1,\ldots,a_{n-3} \in \bZ \cap [-H,H]$, and let $L(a,b) \in \bQ[a,b]$ be linear. Assume that
\[
\Gal(X^n + a_1 X^{n-1} + \cdots + a_{n-3} X^3 + aX^2 + bX + L(a,b), \bQ(a,b)) = S_n.
\]
Then, associated to any proper subgroup of $S_n$, there exist $\bw, \be, \fg$, with $\| \bw \|_{\infty}, \| \be \|_\infty \ll_n 1$ and $\fg \ll H^{O(1)}(1 + |L|)$, such that 
\[
h(a,b,y) = \Phi_{\bw, \be, \fg} (y; a_1, \ldots, a_{n-3}, a, b, L(a,b))
\]
is irreducible over $\bQ$. The same is true with
\[
\Phi_{\bw, \be, \fg} (y; a_1, \ldots, a_{n-3}, a, L(a,b), b)
\]
or
\[
\Phi_{\bw, \be, \fg} (y; a_1, \ldots, a_{n-3}, L(a,b), a, b)
\]
in place of 
$\Phi_{\bw, \be, \fg} (y; a_1, \ldots, a_{n-3}, a, b, L(a,b))$, with the corresponding change to the hypothesis.
\end{lemma}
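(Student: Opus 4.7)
The plan is to apply Lemma \ref{GeneralResolvent} directly to
\[
g(a,b,X) \;=\; X^n + a_1 X^{n-1} + \cdots + a_{n-3}X^3 + aX^2 + bX + L(a,b),
\]
viewed as a polynomial in the two parameters $a,b$ (so $s=2$). It is monic of degree $n$ in $X$, of total degree $O_n(1)$, and by hypothesis has Galois group $S_n$ over $\bQ(a,b)$; in particular it is separable. The coefficient magnitude satisfies $|g| \ll H + |L|$, using $|a_i| \le H$ and the linearity of $L$. If $L$ has non-integral rational coefficients, Lemmas \ref{separable} and \ref{GeneralResolvent} adapt verbatim with $\bZ$ replaced by $\bQ$, since the separability arguments only require distinct roots in $\overline{\bQ(a,b)}$.

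Let $K$ be the proper subgroup of $S_n$ supplied by the statement. The proof of Lemma \ref{GeneralResolvent} produces $\bw,\be,\fg$ by first specializing $(a,b) \mapsto (t_1,t_2)$ at an integer pair with $|t_i| \ll_n 1$, chosen so that the $X$-discriminant of $g$ does not vanish there. Such a pair exists because this discriminant is a nonzero polynomial in $a,b$ of bounded degree (nonzero since $g$ is separable over $\bQ(a,b)$). Applying Lemma \ref{separable} to the specialized polynomial, whose coefficients are bounded by $P \ll H + |L|$, yields
\[
h(a,b,y) \;=\; \Phi_{g,K}(a,b,y) \;=\; \Phi_{\bw,\be,\fg}\bigl(y;\,a_1,\ldots,a_{n-3},a,b,L(a,b)\bigr),
\]
with $\|\bw\|_\infty, \|\be\|_\infty \ll_n 1$ and $\fg = C^3 P \ll H^{O(1)}(1+|L|)$, matching the stated bounds.

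For irreducibility of $h$ over $\bQ$, I would invoke Lemma \ref{GeneralResolvent}(i): every irreducible divisor of $h$ has degree at least $[G:K] = [S_n:K]$ in $y$, whereas the total $y$-degree of $h$ is exactly $[S_n:K]$, so $h$ must itself be irreducible in $\bZ[a,b,y]$. Because $h$ is monic in $y$, Gauss's lemma identifies this with irreducibility in $\bQ[a,b,y]$, hence over $\bQ$. Conceptually, this is the two-parameter analogue of Lemma \ref{converse}: separability of $h$ over $\bQ(a,b)$—inherited from separability at $(t_1,t_2)$ via the observation that the $y$-discriminant of $h$, a polynomial in $a,b$ of bounded degree, is nonzero at that specialization—together with transitivity of the $S_n$-action on $S_n/K$ forces the roots $r_{\bw,\be,\fg}(\sig)$ of $h$ into a single Galois orbit over $\bQ(a,b)$.

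The two remaining placements of $L(a,b)$—as the coefficient of $X^2$ or of $X$ rather than of $X^0$—are handled by an identical argument applied to the correspondingly relabelled polynomial, with no change to the size bounds or the irreducibility conclusion. The only mild obstacle in the whole argument is the passage from separability after a single integer specialization to separability over the full function field $\bQ(a,b)$, but this is standard: a polynomial identity failing at one integer point cannot be the zero polynomial.
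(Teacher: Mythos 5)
Your proof is correct, but it takes a genuinely different and shorter route than the paper's. The paper fixes a box $[-H^C,H^C]^2$, invokes Cohen's quantitative Hilbert irreducibility theorem \cite{Coh1981} to find integers $a_0,b_0$ in that box with $\Gal(G_0+a_0G_1+b_0G_2,\bQ)=S_n$, applies Lemma \ref{separable} at that specialization to construct $\bw,\be,\fg$, concludes irreducibility of $h(a_0,b_0,y)$ via Lemma \ref{converse}, and then lifts to irreducibility of $h(a,b,y)$ by the monic-specialization argument. You instead call Lemma \ref{GeneralResolvent} directly with $s=2$, and observe that since $G=S_n$ the lower bound $[G:K]=[S_n:K]$ in its part (i) equals the total $y$-degree of $\Phi_{g,K}$, so the resolvent must already be irreducible. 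That observation is the crux, and it does work: the $Y$-degree of any irreducible factor corresponds to an orbit of the Galois group acting by left translation on $S_n/K$, and every such orbit has size $[G:G\cap\sigma K\sigma^{-1}]\ge|G|/|K|=[G:K]$ since $K\le G$; equality of the bound with the total degree forces a single orbit. Your route bypasses both \cite{Coh1981} and Lemma \ref{converse}, needing only separability at a bounded-height specialization (as in the proof of Lemma \ref{GeneralResolvent}) rather than full Galois group there, and it also gives the tighter bound $\fg\ll H+|L|$. Two points deserve to be made precise rather than waved through: (a) Lemma \ref{GeneralResolvent} is stated for $g\in\bZ[T_1,\ldots,T_s,X]$, whereas here the constant term $L(a,b)$ may have denominators; you are right that the construction goes through over $\bQ$, but you should say so explicitly (note the paper's own proof also specializes to a polynomial with a rational constant term before invoking Lemma \ref{separable}, which permits $a_i\in[-P,P]$ rather than $a_i\in\bZ$); and (b) the identification of $\Phi_{g,K}(a,b,y)$ with $\Phi_{\bw,\be,\fg}(y;a_1,\ldots,a_{n-3},a,b,L(a,b))$ should be stated as a consequence of the construction in the proof of Lemma \ref{GeneralResolvent}, which produces the resolvent by evaluating the universal $\Phi_{\bw,\be,\fg}$ at the coefficients of $g$.
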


\begin{proof} Let $C \in \bN$ be a large constant, and let $P = H^C$. Write
\[
L(a,b) = \lam a + \mu b + \nu,
\]
and define
\[
G_0(X) = X^n + a_1 X^{n-1} + \cdots + a_{n-3}X^3 + \nu,
\quad G_1(X) = X^2 + \lam,
\quad G_2(X) = X + \mu.
\]
By \cite[Theorem 2.1]{Coh1981}, there can be at most $H^{O(1)} P^{3/2} \log P$ pairs $(a,b) \in [-P,P]^2$ of integers for which  $G_{G_0+aG_1+bG_2} \ne S_n$. Hence, there exist integers $a_0, b_0 \in [-P,P]$ such that $G_{G_0+a_0 G_1+b_0 G_2}  = S_n$. Now Lemma \ref{converse} tells us that $h(a_0,b_0,y)$ is irreducible over $\bQ$, when $\bw, \be, \fg$ are obtained by applying Lemma \ref{separable} to the polynomial 
\[
X^n + a_1 X^{n-1} + \cdots + a_{n-3}X^3 + a_0 X^2 + b_0X +L(a_0,b_0).
\]

Suppose for a contradiction that $h(a,b,y)$ is reducible over $\bQ$. Then, for some positive integer $k \le d-1$ and some $b_1(a,b),\ldots,b_k(a,b), c_1(a,b),\ldots,c_{d-k}(a,b) \in \bQ(a,b)$, we have
\[
h(a,b,y) = (y^k + b_1(a,b) y^{k-1} + \cdots + b_k(a,b))
(y^{k-d} + c_1(a,b) y^{k-d-1} + \cdots + c_{d-k}(a,b)).
\]
Specialising $a = a_0$ and $b = b_0$ contradicts the irreducibility of $h(a_0,b_0,y)$.

The final sentence of the lemma is confirmed by imitating the proof of its first assertion.
\end{proof}

\section{Sextic Galois theory}
\label{SexticSection}

In this section, we establish Theorem \ref{SexticThm}.
Let
\[
f(X) = X^6 - a_1 X^5 + a_2 X^4 - a_3 X^3 + a_4 X^2 - a_5 X + a_6 \in \bZ[X]
\]
be irreducible, and suppose its Galois group $G_f$ is not conjugate to $S_6$ or $A_6$. Then $G_f$ is conjugate to a subgroup of  $G_{72}$, $G_{48}$, or $H_{120}$, in the notation of Hagedorn \cite{Hagedorn}. Note that $G_{72}$ and $G_{48}$ are solvable, whereas $H_{120}$ is insolvable.

\subsection{A decic resolvent}
\label{DecicResolvent}

If $G_f$ is conjugate to a subgroup of $G_{72}$, then by \cite[Theorem 2]{Hagedorn} the polynomial $f_{10}(X)$ from that article has a rational root. As $f_{10}(X)$ is a monic, decic polynomial with integer coefficients, it follows that $f_{10}(y) = 0$ for some $y \in \bZ$.

\subsubsection{Non-degenerate quadruples for the decic resolvent} 
\label{NonDegenerateDecic}

Let $a_1,\ldots,a_4 \in [-H,H]$ be integers such that the vanishing of
\[
g(a_5, a_6, y) = f_{10}(y; a_1,\ldots,a_6)
\]
cuts out an absolutely irreducible surface with no rational lines. Expressions for the coefficients of $f_{10}$ as polynomials in $a_1,\ldots,a_6$ are given in the appendix of \cite{Hagedorn}, and we also computed them using \emph{Mathematica}. Note that
\[
f_{10}(y) = y^{10} - b_1 y^9 + \cdots - b_9 y + b_{10},
\]
where $b_i \ll H^i$ for all $i$ assuming $a_1,\ldots,a_6 \ll H$, and therefore all solutions have $y \ll H$. Note that a large part of the expression for $b_{10}$ is missing from \cite{Hagedorn}.

We adopt the strategy of \S \ref{QuinticNonDegenerate}. First we cover all but $O(H^{\eps + 2/\sqrt{10}})$ points by $O(H^{\eps+1/\sqrt{10}})$ curves. Next, we eliminate $y$, giving $F(d,e) = 0$ with $\deg(F) \ll 1$, and let $\cF(d,e)$ be an irreducible factor of $F(d,e)$. There are $O(H^{\eps+1/2})$ zeros $(d,e)$ unless $\cF$ is linear, by Theorem \ref{LopsidedBP}. In the latter case we obtain
\[
\cP(d,y) = \cF(d,e) = 0,
\]
say, where $\cP$ is irreducible over $\bQ$. Now $\cP$ cannot be linear, so by  Theorem \ref{LopsidedBP} we have
\[
\# \{ (d,y) \in (\bZ \cap [-H,H])^2: \cP(d,y) = 0 \} \ll H^{\eps+ 1/2}.
\]
The contribution from non-degenerate quadruples is $O(H^{\eps+9/2+1/\sqrt{10}})$.

\subsubsection{Degenerate quadruples for the decic resolvent} 
\label{DegenerateDecic}

We begin by discussing absolute irreducibility. Note that $g$ is decic in $a_5,a_6,y$, no matter the specialisation $a_1,\ldots,a_4 \in \bZ$. Thus, by \cite[Chapter V, Theorem 2A]{Schmidt}, there exist integer polynomials 
\[
p_1(a_1,\ldots,a_4), \ldots,p_s(a_1,\ldots,a_4)
\]
of degree $O(1)$ such that if $a_1,\ldots,a_4 \in \bC$ then
\[
g \text{ is not absolutely irreducible } \Leftrightarrow p_i(a_1,\ldots,a_4) = 0 \: (1 \le i \le s).
\]
The next lemma shows that $p_i$ is non-zero for some $i$.

\begin{lemma} If $a_1 = \cdots = a_4 = 0$ then $g(a_5,a_6,y)$ is absolutely irreducible.
\end{lemma}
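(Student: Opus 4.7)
The strategy mirrors the corresponding absolute-irreducibility lemma in \S\ref{QuinticDegenerate}. Since $g(a_5, a_6, y) = f_{10}(y; 0,0,0,0, a_5, a_6)$ is monic of degree $10$ in $y$, Gauss's lemma lets us rescale any factorisation over $\overline{\bQ}[a_5,a_6,y]$ so that both factors are monic in $y$, and both factors then necessarily have positive $y$-degree (else one would be the constant $1$). Specialising $a_5 = 1$ preserves monicity in $y$ for each factor, and in particular preserves positive $y$-degree, so any absolute factorisation of $g(a_5,a_6,y)$ produces an absolute factorisation of $g(1,a_6,y)$. It therefore suffices to verify that $g(1, a_6, y) \in \bZ[a_6, y]$ is absolutely irreducible.

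The plan is to compute $g(1, a_6, y)$ explicitly in \emph{Mathematica} by substituting $a_1 = \cdots = a_4 = 0$ and $a_5 = 1$ into the formulas for the coefficients of $f_{10}$, using Hagedorn's appendix together with the missing portion of $b_{10}$ flagged in \S\ref{DecicResolvent}. With the polynomial in hand, we run two checks, both routine in \emph{Mathematica}: first, that $g(1, a_6, y)$ is irreducible in $\bQ[a_6, y]$; and second, that the Newton polygon of $g(1, a_6, y)$ has vertices whose coordinates (taken together) have greatest common divisor $1$. By \cite[Proposition 3]{BCG2010}, these two facts jointly imply absolute irreducibility of $g(1, a_6, y)$, and hence, by the reduction above, of $g(a_5,a_6,y)$.

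Since $g(1, a_6, y)$ contains the monomial $y^{10}$, the Newton polygon has $(0,10)$ as a vertex; consequently the gcd check reduces to locating any single vertex on the lower envelope whose coordinate pair contains some entry coprime to $10$, exactly as in the quintic analogue where the vertex $(4,1)$ sufficed. The main obstacle is purely computational: $f_{10}$ is a decic polynomial in six variables, its coefficients are lengthy, and as noted in \S\ref{DecicResolvent} a substantial portion of $b_{10}$ is missing from \cite{Hagedorn}, so the non-trivial step is reconstructing $f_{10}$ correctly. Once this is done, the univariate-over-$\bQ(a_6)$ irreducibility test and the Newton-polygon gcd condition are mechanical, and the lemma follows.
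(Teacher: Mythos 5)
Your proposal follows essentially the same route as the paper: reduce to a bivariate polynomial by monicity in $y$ and Gauss's lemma, verify irreducibility over $\bQ$ by computer, and then apply the Newton-polygon gcd criterion of \cite[Proposition 3]{BCG2010}. The only (inessential) difference is that you specialise $a_5 = 1$ where the paper specialises $a_6 = 1$; either works since $g$ is homogeneous enough for the Newton polygon to have a suitable vertex, and in both cases the decisive step is the explicit \emph{Mathematica} computation, which you describe as a plan but do not carry out.
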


\begin{proof} Let $a_1 = \cdots = a_4 = 0$. Then 
\begin{align*}
b_1 &= \cdots = b_4 = 0, \qquad b_5 = -123 a_5^2,
\qquad b_6 = 129a_6^2,
\qquad b_7 = 0, \\
b_8 &= 66 a_5^2 a_6, \qquad b_9 = 64 a_6^3, \qquad b_{10} = a_5^4,
\end{align*}
so 
\[
g(a_5,a_6,y) = y^{10} + 123 a_5^2 y^5 + 129 a_6^2 y^4 
+ 66 a_5^2 a_6 y^2 - 64a_6^3 y +a_5^4.
\]
This is monic in $y$, so it suffices to prove that
\[
g(a_5,1,y)
= y^{10} + 123 a_5^2 y^5 + 129  y^4 
+ 66 a_5^2  y^2 - 64 y +a_5^4
\]
is absolutely irreducible. \emph{Mathematica} assures us that $g(a_5,1,y)$ is irreducible over the rationals. Its Newton polygon is the convex hull of
\[
(0,10), (2,5), (0,4), (2,2), (0,1), (4,0),
\]
which has vertices $(0,1),(0,10),(4,0)$. As $\gcd(0,1,0,10,4,0)=1$, absolute irreducibility is ensured by \cite[Proposition $3$]{BCG2010}.
\end{proof}

Consequently, the quadruples $(a_1,\ldots,a_4)$ such that $g(a_5,a_6,y)$ is reducible over $\overline{\bQ}$ are all zeros of a fixed non-zero polynomial of degree $O(1)$. In particular, there are $O(H^3)$ such quadruples $(a_1,\ldots,a_4)$.

\bigskip 

Next, we discuss lines on the surface. The idea is to show that, for generic $a_1,\ldots,a_4$, the surface $\{ g(a_5,a_6,y) = 0\}$ contains no rational lines or, \emph{a fortiori}, no complex lines. A line on the surface has one of the following parametrisations:
\begin{enumerate}[I.]
\item $\{ (0,\bet,\gam) + t(1,b,c): t \in \bC \}$, for some $\bet,\gam,b,c \in \bC$
\item $\{ (\alp,0,\gam) + t(0,1,c): t \in \bC \}$, for some $\alp,\gam,c \in \bC$
\item $\{ (\alp,\bet,0) + t(0,0,1): t \in \bC \}$, for some $\alp, \bet \in \bC$.
\end{enumerate}

We use the `SymmetricReduction' command in \emph{Mathematica} to evaluate and store the polynomial $g(a_5,a_6,y)$ explicitly:

\bigskip

{\tiny
\begin{verbatim}
ClearAll["Global`*"];
r1 = (x1 + x2 + x3)*(x4 + x5 + x6); r2 = (x1 + x2 + x4)*(x3 + x5 + x6); r3 = (x1 + x2 + x5)*(x3 + x4 + x6);
r4 = (x1 + x2 + x6)*(x3 + x4 + x5); r5 = (x1 + x3 + x4)*(x2 + x5 + x6); r6 = (x1 + x3 + x5)*(x2 + x4 + x6);
r7 = (x1 + x3 + x6)*(x2 + x4 + x5); r8 = (x1 + x4 + x5)*(x2 + x3 + x6); r9 = (x1 + x4 + x6)*(x2 + x3 + x5);
r10 = (x1 + x5 + x6)*(x2 + x3 + x4); 
b1 = SymmetricPolynomial[1, {r1, r2, r3, r4, r5, r6, r7, r8, r9, r10}];
b2 = SymmetricPolynomial[2, {r1, r2, r3, r4, r5, r6, r7, r8, r9, r10}];
b3 = SymmetricPolynomial[3, {r1, r2, r3, r4, r5, r6, r7, r8, r9, r10}];
b4 = SymmetricPolynomial[4, {r1, r2, r3, r4, r5, r6, r7, r8, r9, r10}];
b5 = SymmetricPolynomial[5, {r1, r2, r3, r4, r5, r6, r7, r8, r9, r10}];
b6 = SymmetricPolynomial[6, {r1, r2, r3, r4, r5, r6, r7, r8, r9, r10}];
b7 = SymmetricPolynomial[7, {r1, r2, r3, r4, r5, r6, r7, r8, r9, r10}];
b8 = SymmetricPolynomial[8, {r1, r2, r3, r4, r5, r6, r7, r8, r9, r10}];
b9 = SymmetricPolynomial[9, {r1, r2, r3, r4, r5, r6, r7, r8, r9, r10}];
b10 = SymmetricPolynomial[10, {r1, r2, r3, r4, r5, r6, r7, r8, r9, r10}];
B1 = SymmetricReduction[b1, {x1, x2, x3, x4, x5, x6}, {a1, a2, a3, a4, a5, a6}][[1]];
B2 = SymmetricReduction[b2, {x1, x2, x3, x4, x5, x6}, {a1, a2, a3, a4, a5, a6}][[1]];
B3 = SymmetricReduction[b3, {x1, x2, x3, x4, x5, x6}, {a1, a2, a3, a4, a5, a6}][[1]];
B4 = SymmetricReduction[b4, {x1, x2, x3, x4, x5, x6}, {a1, a2, a3, a4, a5, a6}][[1]];
B5 = SymmetricReduction[b5, {x1, x2, x3, x4, x5, x6}, {a1, a2, a3, a4, a5, a6}][[1]];
B6 = SymmetricReduction[b6, {x1, x2, x3, x4, x5, x6}, {a1, a2, a3, a4, a5, a6}][[1]];
B7 = SymmetricReduction[b7, {x1, x2, x3, x4, x5, x6}, {a1, a2, a3, a4, a5, a6}][[1]];
B8 = SymmetricReduction[b8, {x1, x2, x3, x4, x5, x6}, {a1, a2, a3, a4, a5, a6}][[1]];
B9 = SymmetricReduction[b9, {x1, x2, x3, x4, x5, x6}, {a1, a2, a3, a4, a5, a6}][[1]];
B10 = SymmetricReduction[b10, {x1, x2, x3, x4, x5, x6}, {a1, a2, a3, a4, a5, a6}][[1]];
g = y^(10) - B1*y^9 + B2*y^8 - B3*y^7 + B4*y^6 - B5*y^5 + B6*y^4 - B7*y^3 + B8*y^2 - B9*y + B10;
\end{verbatim}
} \bigskip

Then we expand it as a polynomial in $t$, in each of the three cases. In Case I, the polynomial is $c^{10} t^{10}$ plus lower-order terms, and so $c = 0$. When we specialise $c = 0$, we obtain a monic, quartic polynomial $t$. Such a polynomial cannot vanish identically, so there are no lines of Type I. 

Similarly in Case II we must have $c=0$, and then we obtain a polynomial of degree at most 3 in $t$. Setting the cubic coefficient to 0 expresses
\[
\gam = a_1^2/4.
\]
We then substitute this into the equation obtained by setting the quadratic coefficient to 0, determining an equation $P(a_1,\ldots,a_4) = 0$, where $P \in \bZ[a_1,\ldots,a_4] \setminus \{ 0 \}$ has degree $O(1)$, because $\alp$ so happens to be eliminated as a result of this substitution. Thus, there are $O(H^3)$ integer quadruples $(a_1,\ldots,a_4)$ such that $\{ g(a_5,a_6,y) = 0\}$ contains a line of Type II.

In Case III the polynomial is monic of degree 10, so there are no lines of Type III. Having considered all cases, we conclude that there are  $O(H^3)$ integer quadruples $(a_1,\ldots,a_4)$ such that $\{ g(a_5,a_6,y) = 0\}$ contains a line.

\bigskip Finally, we estimate the contribution from degenerate quadruples. We showed that there are $O(H^3)$ degenerate quadruples $(a_1,\ldots,a_4) \in [-H,H]$. Let us now fix such a quadruple. By \cite[Lemma 2]{Die2012}, there are at most $O(1)$ values of $a_5 \in \bZ$ for which 
\[
X^6 - a_1 X^5 + \cdots - a_5 X + a_6 \in \bQ(a_6)[X]
\]
has non-$S_6$ Galois group over $\bQ(a_6)$. The contribution from these choices of $a_5$ is $O(H^4)$. Now let $a_5 \in [-H,H]$ be such that the Galois group is $S_6$.

Let $C \in \bN$ be a large constant, and let $P = H^C$. By \cite[Theorem 2.1]{Coh1981}, there exists $a_6^* \in \bZ \cap [-P,P]$ such that
\[
\Gal(X^6 - a_1 X^5 + \cdots + a_4X^2 - a_5 X + a_6^*, \bQ) = S_6.
\]
By Lemma \ref{converse}, the polynomial
\[ \Phi_{\bw,\be,\fg}(y;a_1,\ldots,a_5, a_6^*) \in \bQ[y]
\]
is irreducible, where $\bw,\be,\fg$ are obtained by applying Lemma \ref{separable} to the polynomial 
\[
X^6 - a_1 X^5 + \cdots + a_4X^2 - a_5 X + a_6^*.
\]

It then follows that
\[
h(a_6,y) := \Phi_{\bw,\be,\fg}(y;a_1,\ldots,a_5, a_6) \in \bQ[a_6,y]
\]
is irreducible. Indeed, it is monic in $y$, and if we were to have
\[
h(a_6,y) = h_1(a_6,y) h_2(a_6,y)
\]
for some $h_1,h_2$ of positive degrees in $y$, then
\[
h(a_6^*,y) = h_1(a_6^*,y) h_2(a_6^*,y)
\]
would be a non-trivial factorisation of $h(a_6^*,y)$.

Further, by Lemma \ref{ResolventProperty}, if $a_6 \in \bZ \cap [-H,H]$ and $G_f \le G_{72}$ then $h(a_6,y)$ has an integer root $y \ll H^{O(1)}$. By Theorem \ref{LopsidedBP}, the diophantine equation
$h(a_6,y) = 0$ has $O(H^{\eps+|G_{72}|/720}) = O(H^{\eps+0.1})$ integer solutions $a_6,y$ with $a_6 \ll H$ and $y \ll H^{O(1)}$. The contribution from degenerate quadruples, when $G_f \le G_{72}$, is therefore $O(H^{4.1+\eps})$.

We conclude that
\[
N_G \ll H^{9/2+\eps+1/\sqrt{10}} \qquad (G \le G_{72}).
\]

\subsection{A resolvent of degree 15}

\subsubsection{Non-degenerate quadruples for the resolvent of degree 15}

For $G_f \le G_{48}$, we use the resolvent
\[
f_{15}(y) = y^{15} - c_1 y^{14} + \cdots + c_{14}y - c_{15}
\]
from \cite{Hagedorn},
where $c_i \ll H^i$ for all $i$ assuming $a_1,\ldots,a_6 \ll H$. Expressions for $c_1,\ldots,c_{15}$ as polynomials in $a_1,\ldots,a_6$ are given in the appendix of \cite{Hagedorn}, and we also computed them using \emph{Mathematica}. Note that the former contains a few small errors. The argument of \S \ref{NonDegenerateDecic} bounds the contribution from this case by a constant times
\[
H^{4+0.5+\eps+1/\sqrt{15}} \ll H^{4.76}.
\]
We provide the code below.

\bigskip

{\tiny
\begin{verbatim}
ClearAll["Global`*"];
r1 = (x1*x2) + (x3*x4) + (x5*x6); r2 = (x1*x2) + (x3*x5) + (x4*x6); r3 = (x1*x2) + (x3*x6) + (x4*x5);
r4 = (x1*x3) + (x2*x4) + (x5*x6); r5 = (x1*x3) + (x2*x5) + (x4*x6); r6 = (x1*x3) + (x2*x6) + (x4*x5);
r7 = (x1*x4) + (x2*x3) + (x5*x6); r8 = (x1*x4) + (x2*x5) + (x3*x6); r9 = (x1*x4) + (x2*x6) + (x3*x5);
r10 = (x1*x5) + (x2*x3) + (x4*x6); r11 = (x1*x5) + (x2*x4) + (x3*x6); r12 = (x1*x5) + (x2*x6) + (x3*x4);
r13 = (x1*x6) + (x2*x3) + (x4*x5); r14 = (x1*x6) + (x2*x4) + (x3*x5); r15 = (x1*x6) + (x2*x5) + (x3*x4);
b1 = SymmetricPolynomial[1, {r1, r2, r3, r4, r5, r6, r7, r8, r9, r10, r11, r12, r13, r14, r15}];
b2 = SymmetricPolynomial[2, {r1, r2, r3, r4, r5, r6, r7, r8, r9, r10, r11, r12, r13, r14, r15}];
b3 = SymmetricPolynomial[3, {r1, r2, r3, r4, r5, r6, r7, r8, r9, r10, r11, r12, r13, r14, r15}];
b4 = SymmetricPolynomial[4, {r1, r2, r3, r4, r5, r6, r7, r8, r9, r10, r11, r12, r13, r14, r15}];
b5 = SymmetricPolynomial[5, {r1, r2, r3, r4, r5, r6, r7, r8, r9, r10, r11, r12, r13, r14, r15}];
b6 = SymmetricPolynomial[6, {r1, r2, r3, r4, r5, r6, r7, r8, r9, r10, r11, r12, r13, r14, r15}];
b7 = SymmetricPolynomial[7, {r1, r2, r3, r4, r5, r6, r7, r8, r9, r10, r11, r12, r13, r14, r15}];
b8 = SymmetricPolynomial[8, {r1, r2, r3, r4, r5, r6, r7, r8, r9, r10, r11, r12, r13, r14, r15}];
b9 = SymmetricPolynomial[9, {r1, r2, r3, r4, r5, r6, r7, r8, r9, r10, r11, r12, r13, r14, r15}];
b10 = SymmetricPolynomial[10, {r1, r2, r3, r4, r5, r6, r7, r8, r9, r10, r11, r12, r13, r14, r15}];
b11 = SymmetricPolynomial[11, {r1, r2, r3, r4, r5, r6, r7, r8, r9, r10, r11, r12, r13, r14, r15}];
b12 = SymmetricPolynomial[12, {r1, r2, r3, r4, r5, r6, r7, r8, r9, r10, r11, r12, r13, r14, r15}];
b13 = SymmetricPolynomial[13, {r1, r2, r3, r4, r5, r6, r7, r8, r9, r10, r11, r12, r13, r14, r15}];
b14 = SymmetricPolynomial[14, {r1, r2, r3, r4, r5, r6, r7, r8, r9, r10, r11, r12, r13, r14, r15}];
b15 = SymmetricPolynomial[15, {r1, r2, r3, r4, r5, r6, r7, r8, r9, r10, r11, r12, r13, r14, r15}];
B1 = SymmetricReduction[b1, {x1, x2, x3, x4, x5, x6}, {a1, a2, a3, a4, a5, a6}][[1]]
B2 = SymmetricReduction[b2, {x1, x2, x3, x4, x5, x6}, {a1, a2, a3, a4, a5, a6}][[1]]
B3 = SymmetricReduction[b3, {x1, x2, x3, x4, x5, x6}, {a1, a2, a3, a4, a5, a6}][[1]]
B4 = SymmetricReduction[b4, {x1, x2, x3, x4, x5, x6}, {a1, a2, a3, a4, a5, a6}][[1]]
B5 = SymmetricReduction[b5, {x1, x2, x3, x4, x5, x6}, {a1, a2, a3, a4, a5, a6}][[1]]
B6 = SymmetricReduction[b6, {x1, x2, x3, x4, x5, x6}, {a1, a2, a3, a4, a5, a6}][[1]]
B7 = SymmetricReduction[b7, {x1, x2, x3, x4, x5, x6}, {a1, a2, a3, a4, a5, a6}][[1]]
B8 = SymmetricReduction[b8, {x1, x2, x3, x4, x5, x6}, {a1, a2, a3, a4, a5, a6}][[1]]
B9 = SymmetricReduction[b9, {x1, x2, x3, x4, x5, x6}, {a1, a2, a3, a4, a5, a6}][[1]]
B10 = SymmetricReduction[b10, {x1, x2, x3, x4, x5, x6}, {a1, a2, a3, a4, a5, a6}][[1]]
B11 = SymmetricReduction[b11, {x1, x2, x3, x4, x5, x6}, {a1, a2, a3, a4, a5, a6}][[1]]
B12 = SymmetricReduction[b12, {x1, x2, x3, x4, x5, x6}, {a1, a2, a3, a4, a5, a6}][[1]]
B13 = SymmetricReduction[b13, {x1, x2, x3, x4, x5, x6}, {a1, a2, a3, a4, a5, a6}][[1]]
B14 = SymmetricReduction[b14, {x1, x2, x3, x4, x5, x6}, {a1, a2, a3, a4, a5, a6}][[1]]
B15 = SymmetricReduction[b15, {x1, x2, x3, x4, x5, x6}, {a1, a2, a3, a4, a5, a6}][[1]]
\end{verbatim}   
} \bigskip

\subsubsection{Degenerate quadruples for the resolvent of degree 15}

We begin by discussing absolute irreducibility. Note that $g$ has degree $15$ in $a_5,a_6,y$, no matter the specialisation $a_1,\ldots,a_4 \in \bZ$. Thus, by \cite[Chapter V, Theorem 2A]{Schmidt}, there exist integer polynomials $p_1(a_1,\ldots,a_4), \ldots, p_s(a_1,\ldots,a_4)$ of degree $O(1)$ such that if $a_1,\ldots,a_4 \in \bC$ then
\[
g \text{ is not absolutely irreducible } \Leftrightarrow p_i(a_1,\ldots,a_4) = 0 \: (1 \le i \le s).
\]
The next lemma shows that $p_i$ is non-zero for some $i$.

\begin{lemma} The polynomial
\[
g(a_5,a_6,y) := f_{15}(y; 0,0,0,0,a_5,a_6)
\]
is absolutely irreducible.
\end{lemma}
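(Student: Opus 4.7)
The plan is to mirror the strategy used for the analogous lemma concerning $f_{10}$ in \S \ref{DegenerateDecic}. First, I would use \emph{Mathematica}, building on the code block just printed in \S \ref{DecicResolvent}, to compute the specialisation
\[
g(a_5,a_6,y) = f_{15}(y;0,0,0,0,a_5,a_6)
= y^{15} - c_1 y^{14} + \cdots + c_{14}y - c_{15},
\]
where each $c_i = c_i(a_5,a_6)$ is the polynomial obtained from the expression for $c_i$ in $a_1,\ldots,a_6$ by setting $a_1=a_2=a_3=a_4=0$. Most $c_i$ will vanish or simplify drastically under this specialisation, since many monomials in the general formulas involve $a_1,a_2,a_3,a_4$.

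Next, since the polynomial is manifestly monic of degree $15$ in $y$, it suffices (exactly as in the decic case) to prove absolute irreducibility of the bivariate polynomial $g(a_5,1,y) \in \bZ[a_5,y]$. I would then ask \emph{Mathematica} to verify that $g(a_5,1,y)$ is irreducible over $\bQ$. Assuming this returns an affirmative answer, the remaining task is to upgrade irreducibility over $\bQ$ to absolute irreducibility by invoking \cite[Proposition 3]{BCG2010}, as was done for the decic resolvent.

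To apply that criterion, I would compute the Newton polygon of $g(a_5,1,y)$, identify its vertices, and verify that the gcd of the coordinates appearing at those vertices is $1$. If this gcd condition holds, absolute irreducibility follows immediately. The main obstacle I foresee is computational rather than conceptual: $f_{15}$ has degree $15$ in $y$ with coefficients that are high-degree polynomials in $a_1,\ldots,a_6$, so the specialisation and subsequent Newton-polygon computation must be performed carefully and recorded explicitly. The conceptual worry would be if the Newton polygon happens to have coordinates with a nontrivial common factor or if $g(a_5,1,y)$ turned out to factor over $\bQ$; in either case one would need to fall back to a different specialisation (for instance $a_5 \mapsto 1$, $a_6$ free, or some other Zariski-generic integer choice of $a_1,\ldots,a_4$) and repeat the check, exploiting the fact that, by \cite[Chapter V, Theorem 2A]{Schmidt}, absolute irreducibility at a single specialisation already suffices to cut out the degenerate locus.
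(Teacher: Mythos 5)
Your proposal matches the paper's proof essentially step for step: compute the specialisation with \emph{Mathematica}, exploit monicity in $y$ to reduce to the bivariate polynomial $g(a_5,1,y)$, verify irreducibility over $\bQ$ computationally, and then deduce absolute irreducibility from the Newton polygon criterion of \cite[Proposition 3]{BCG2010} after checking that the gcd of the vertex coordinates is $1$. The paper carries this out explicitly (finding vertices $(6,0),(2,1),(0,3),(0,15)$ with gcd $1$), so your contingency plan is not needed, but it is a sensible safeguard and does not affect the correctness of the argument.
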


\begin{proof}
As $g(a_5,a_6,y)$ is monic in $y$, it suffices to prove that $g(a_5,1,y)$ is absolutely irreducible. We compute using \emph{Mathematica} that
\begin{align*}
g(a_5,1,y) = 
32 a_5^6 + 1296 a_5^2 y + 792 a_5^4 y^2 - 1728 y^3 - 96 a_5^2 y^4 - 
 353 a_5^4 y^5 \\
 \qquad - 1232 y^6 + 288 a_5^2 y^7 + 453 y^9 - 21 a_5^2 y^{10} - 
 42 y^{12} + y^{15}
\end{align*}
is irreducible over the rationals. Its Newton polygon is the convex hull of
\[
(6,0), (2,1), (4,2), (0,3), (2,4), (4,5), (0, 6), (2,7), (0,9), (2,10), (0,12), (0, 15),
\]
which has vertices $(6,0), (0,3), (0,15), (2, 1)$. As $\gcd(6,0,0,3,0,15,2,1)=1$, absolute irreducibility is assured by \cite[Proposition $3$]{BCG2010}.
\end{proof}

Consequently, the quadruples $(a_1,\ldots,a_4)$ such that $g(a_5,a_6,y)$ is reducible over $\bC$ are all zeros of a fixed non-zero polynomial of degree $O(1)$. In particular, there are $O(H^3)$ such quadruples $(a_1,\ldots,a_4)$.

\bigskip

Next, we discuss lines on the surface. We have the same trichotomy as in the previous subsection. In Case I, the polynomial is $c^{15} t^{15}$ plus lower-order terms, so $c=0$. When we specialise $c=0$, we obtain $32t^6$ plus lower-order terms. Such a polynomial cannot vanish identically, so there are no lines of Type I.

\bigskip

Similarly in Case II we must have $c=0$, and then we obtain a polynomial of degree at most 4 in $t$. We set the quartic, cubic and quadratic coefficients to 0, computing them using \emph{Mathematica}. Calling these $P_4,P_3,P_2$ respectively, we obtain 
\begin{equation} \label{432}
P_4(a_1,a_2,\gam) = P_3(a_1,\ldots,a_4,\alp,\gam) = P_2(a_1,\ldots,a_4,\alp,\gam) = 0,
\end{equation}
and $P_4,P_3,P_2$ are non-zero polynomials of degree $O(1)$. It so happens that 
\begin{align*}
P_4(a_1,a_2,\gam) &= -49 a_1^6 + 315 a_1^4 a_2 - 648 a_1^2 a_2^2 + 432 a_2^3 + 
 189 a_1^4 \gam - 1080 a_1^2 a_2 \gam \\
 & \qquad + 1296 a_2^2 \gam + 
 432 a_1^2 \gam^2 - 1728 \gam^3
\end{align*}
does not involve $a_3,a_4,\alp$. Now $R(a_1,\ldots,a_4,\gam) =0$, where $R$ is the resultant of $P_3$ and $P_2$ in the variable $\alp$. We explicitly evaluate $R$ using \emph{Mathematica}.

Let $P(a_1,\ldots,a_4)$ be the resultant of $P_4$ and $R$ in the variable $\gam$. We can compute 
$
P(1,0,0,0)
$
using \emph{Mathematica}, by specialising before taking the resultant. The outcome is non-zero, so $P$ is not the zero polynomial. Furthermore
\[
P(a_1,\ldots,a_4) = 0
\]
whenever $(a_1,a_2,a_3,a_4,\alp,\gam) \in \bC^6$ is a solution to \eqref{432}. Thus, there are $O(H^3)$ integer quadruples $(a_1,\ldots,a_4)$ such that $\{g(a_5,a_6,y)=0\}$ contains a line of Type II.

\bigskip

In Case III the polynomial is monic of degree 15, so there are no rational lines of Type III. Having considered all cases, we conclude that there are  $O(H^3)$ integer quadruples $(a_1,\ldots,a_4)$ such that $\{ g(a_5,a_6,y) = 0\}$ contains a complex line.

\bigskip

The upshot is that there are $O(H^3)$ degenerate quadruples to consider. Now the reasoning of the  previous subsection bounds their contribution by  $O(H^{\eps+61/15})$. We conclude that if $G \le G_{48}$ then
\[
N_G \ll H^{4+0.5+\eps+1/\sqrt{15}} \le H^{4.76}.
\]

This completes the proof of the second statement in Theorem \ref{SexticThm}.

\subsection{Insolvable sextics}

For subgroups of $H_{120}$, we use Stauduhar's \cite{Stauduhar} resolvent
\[
\Psi(y;a_1,\ldots,a_6) = \prod_{\sig \in S_6/H_{120}} (y - \sig \tet) \in \bZ[y; a_1,\ldots,a_6],
\]
where
\begin{align*}
\tet &= (\alp_1 \alp_2 + \alp_3 \alp_5 + \alp_4 \alp_6)
(\alp_1 \alp_3 + \alp_4 \alp_5 + \alp_2 \alp_6)
(\alp_3 \alp_4 + \alp_1 \alp_6 + \alp_2 \alp_5) \\
&\qquad \cdot
(\alp_1 \alp_5 + \alp_2 \alp_4 + \alp_3 \alp_6)
(\alp_1 \alp_4 + \alp_2 \alp_3 + \alp_5 \alp_6).
\end{align*}
Here, if 
\[
f(X) = X^6 - a_1 X^5 + a_2 X^4 - a_3 X^3 + a_4 X^2 - a_5 X + a_6 \in \bZ[X]
\]
is irreducible, we write $\alp_1,\ldots,\alp_6$ for the roots of $f$. At first it may appear that Stauduhar uses right cosets in \cite{Stauduhar}, but in modern language these $\sig H_{120}$ are more commonly referred to as left cosets; see the footnote on \cite[p. 983]{Stauduhar}.

As $H_{120}$ is generated by $(126)(354)$, $(12345)$ and $(2354)$, from \cite[Table 1]{Stauduhar} where it is denoted $G_{120}$, we see that $\tet$ is $H_{120}$-invariant. Thus, if $G_f \le \tau H_{120} \tau^{-1}$ for some $\tau \in S_6$, then $\tau \tet$ is a $G_f$-invariant algebraic integer and so $\tau \tet \in \bZ$. The upshot is that if $G_f$ is conjugate to a subgroup of $H_{120}$ then $\Psi$ has an integer root.

One can find the cycle types of the conjugacy classes of $H_{120}$, which is also known as 6T14, in \cite[Table 6C]{BM1983}, to see that $H_{120}$ has no 2-cycles and no 3-cycles. It follows that $\{ (1),(12),(13),(14),(15),(16)\}$ constitutes a complete set of left coset representatives for $H_{120}$ in $S_6$. This enables us to construct the resolvent in \emph{Mathematica}:

\bigskip

{\tiny
\begin{verbatim}
ClearAll["Global`*"];
r1 = (x1*x2 + x3*x5 + x4*x6)*(x1*x3 + x4*x5 + x2*x6)*(x3*x4 + x1*x6 + x2*x5)*(x1*x5 + x2*x4 + x3*x6)
*(x1*x4 + x2*x3 + x5*x6);
r2 = (x2*x1 + x3*x5 + x4*x6)*(x2*x3 + x4*x5 + x1*x6)*(x3*x4 + x2*x6 + x1*x5)*(x2*x5 + x1*x4 + x3*x6)
*(x2*x4 + x1*x3 + x5*x6);
r3 = (x3*x2 + x1*x5 + x4*x6)*(x3*x1 + x4*x5 + x2*x6)*(x1*x4 + x3*x6 + x2*x5)*(x3*x5 + x2*x4 + x1*x6)
*(x3*x4 + x2*x1 + x5*x6);
r4 = (x4*x2 + x3*x5 + x1*x6)*(x4*x3 + x1*x5 + x2*x6)*(x3*x1 + x4*x6 + x2*x5)*(x4*x5 + x2*x1 + x3*x6)
*(x4*x1 + x2*x3 + x5*x6);
r5 = (x5*x2 + x3*x1 + x4*x6)*(x5*x3 + x4*x1 + x2*x6)*(x3*x4 + x5*x6 + x2*x1)*(x5*x1 + x2*x4 + x3*x6)
*(x5*x4 + x2*x3 + x1*x6);
r6 = (x6*x2 + x3*x5 + x4*x1)*(x6*x3 + x4*x5 + x2*x1)*(x3*x4 + x6*x1 + x2*x5)*(x6*x5 + x2*x4 + x3*x1)
*(x6*x4 + x2*x3 + x5*x1);
b1 = SymmetricPolynomial[1, {r1, r2, r3, r4, r5, r6}];
b2 = SymmetricPolynomial[2, {r1, r2, r3, r4, r5, r6}];
b3 = SymmetricPolynomial[3, {r1, r2, r3, r4, r5, r6}];
b4 = SymmetricPolynomial[4, {r1, r2, r3, r4, r5, r6}];
b5 = SymmetricPolynomial[5, {r1, r2, r3, r4, r5, r6}];
b6 = SymmetricPolynomial[6, {r1, r2, r3, r4, r5, r6}];
B1 = SymmetricReduction[b1, {x1, x2, x3, x4, x5, x6}, {a1, a2, a3, a4, a5, a6}][[1]]; Export["B1.txt", B1];
B2 = SymmetricReduction[b2, {x1, x2, x3, x4, x5, x6}, {a1, a2, a3, a4, a5, a6}][[1]]; Export["B2.txt", B2];
B3 = SymmetricReduction[b3, {x1, x2, x3, x4, x5, x6}, {a1, a2, a3, a4, a5, a6}][[1]]; Export["B3.txt", B3];
B4 = SymmetricReduction[b4, {x1, x2, x3, x4, x5, x6}, {a1, a2, a3, a4, a5, a6}][[1]]; Export["B4.txt", B4];
B5 = SymmetricReduction[b5, {x1, x2, x3, x4, x5, x6}, {a1, a2, a3, a4, a5, a6}][[1]]; Export["B5.txt", B5];
B6 = SymmetricReduction[b6, {x1, x2, x3, x4, x5, x6}, {a1, a2, a3, a4, a5, a6}][[1]]; Export["B6.txt", B6];
\end{verbatim}
} \bigskip

\subsubsection{Non-degenerate quadruples}

In due course, we will define a non-zero polynomial
\[
R(a_1,\ldots,a_4) \in \bZ[a_1,\ldots,a_4]
\]
with coefficients and degree $O_n(1)$.
Let $a_1,\ldots,a_4 \in [-H,H]$ be integers such that the vanishing of
\[
g(a_5, a_6, y) = \Psi(y; a_1,\ldots,a_6)
\]
cuts out an absolutely irreducible surface with no rational lines, and such that
\begin{equation} \label{SexticWeird}
(5 a_1^2 - 18 a_2) R(a_1,\ldots,a_4) \ne 0.
\end{equation}
We wish to count integer zeros of $g$ with $a_5, a_6 \ll H$, and for these we have $y \ll H^M$ for some absolute constant $M \in \bN$, by \cite[Lemma 1]{Die2012}. Following \S \ref{QuinticNonDegenerate}, we can cover all but $O(H^{\eps + 2/\sqrt{6}})$ points by $O(H^{\eps+1/\sqrt{6}})$ curves using Theorem \ref{BrowningThm}. Then we eliminate $y$, giving $F(a_5,a_6) = 0$ with $\deg(F) \ll 1$, and let $\cF(a_5,a_6)$ be an irreducible factor of $F(a_5,a_6)$. There are $O(H^{\eps+1/2})$ zeros $(a_5,a_6)$ unless $\cF$ is linear, so we now assume the latter. We divide into two cases according to whether or not the coefficient of $a_6$ in $\cF$ vanishes.

\bigskip

\textbf{Case 1: $a_6 = \fH a_5 + \fL$.}
Substituting this into $g(a_5,a_6,y) = 0$ and factorising, we obtain
\[
\cP(a_5,y) = \cF(a_5,a_6) = 0,
\]
where $\cP$ is irreducible over $\bQ$. Moreover, since there are no rational lines, the polynomial $\cP$ must be non-linear. By Theorem \ref{LopsidedBP}, we have
\[
\# \{ (a_5,y) \in \bZ^2 \cap [-H,H] \times [-CH^M,CH^M]: \cP(a_5,y) = 0 \} \ll H^{\eps+ M/D},
\]
where $C$ is a large, positive constant and
\[
2 \le D := \max \{ \deg_{a_5}(\cP), M \deg_y(\cP) \} \ll 1.
\]

If $D \ge 2M$ then the bound is adequate, so we may suppose that $2 \le D \le 2M-1$, and in particular $\deg_y (\cP) \le 1$. As $\cP(a_5,y)$ divides
\[
g(a_5, \fH a_5 + \fL,y) = y^6 + \sum_{i=0}^5 P_i(a_5) y^i,
\]
we infer that $\cP(a_5,y)$ is a rational multiple of $y - F(a_5)$, where $F(a_5) \in \bQ[a_5]$ has degree $\deg_{a_5}(\cP)$. Thus, we may assume that
\[
\cP(a_5,y) = y - F(a_5).
\]
Next, write
\[
(y - F(a_5)) \left (y^5 + \sum_{i = 0}^4  f_i(a_5) y^i \right)
= y^6 + \sum_{i=0}^5 P_i(a_5) y^i.
\]

Equating coefficients in $y$ yields
\begin{align*}
f_4(a_5) - F(a_5) &= P_5(a_5) \\
f_3(a_5) - f_4(a_5) F(a_5) &= P_4(a_5) \\
f_2(a_5) - f_3(a_5) F(a_5) &= P_3(a_5) \\
f_1(a_5) - f_2(a_5) F(a_5) &= P_2(a_5) \\
f_0(a_5) - f_1(a_5) F(a_5) &= P_1(a_5) \\
-f_0(a_5) F(a_5) &= P_0(a_5).
\end{align*}
Using \emph{Mathematica}, we compute that 
\[
\deg(P_i) \le 2(6-i) \qquad (0 \le i \le 5).
\]
The argument of \S \ref{QuinticNonDegenerate} then delivers a contradiction, unless $\deg(F) = 2$. We now assume the latter.

From the equations above, we have
\[
f_4(a_5) = P_5(a_5) + F(a_5),
\]
so
\[
f_3(a_5) = P_4(a_5) + F(a_5) (P_5(a_5) + F(a_5)).
\]
Continuing make the substitutions, we obtain 
\[
\fP = 0
\]
for some explicit polynomial $\fP$ in $F(a_5),P_5(a_5),\ldots,P_0(a_5)$. Let
\[
F(a_5) = \fA a_5^2 + \fB a_5 + \fG,
\]
where $\fA,\fB,\fG \in \bC$, and substitute this into $\fP$ to give
\[
P = 0 \in \bQ[a_1,\ldots,a_4, \fA, \fB, \fG, \fH, \fL][a_5],
\]
for some polynomial $P$ that we computed using \emph{Mathematica}. With the coefficients of the resolvent having been evaluated and stored, the code is as follows:

\bigskip
{\tiny
\begin{verbatim}
F = A a5^2 + B a5 + G; a6 = H a5 + L;
f4 = F - B1; f3 = f4 F + B2; f2 = f3 F - B3; f1 = f2 F + B4; f0 = f1 F - B5;
P = f0 F + B6;
\end{verbatim}
} \bigskip

For $i=0,1,\ldots,12$, denote by $c_i$ the coefficient of $a_5^i$ in $P$. We have
\[
0 = c_{12} = (\fA - 32)(\fA-2)^5,
\]
so $\fA \in \{2,32\}$. If $\fA = 2$ then $c_{11} = -30 (a_1 - 6 \fH)^5$, whereupon $\fH = a_1/6$ and 
\[
0 = c_{10} = \frac{5 (5 a_1^2 - 18 a_2)^5}{1296},
\]
contradicting \eqref{SexticWeird}. 

Therefore $\fA = 32$. The equation $c_{11} = 0$ then enables us to write $\fB$ as an explicit polynomial in $\fH,a_1,\ldots,a_4$. We substitute both of these data into the other coefficients.
Finally, taking resultants yields 
\[
R(a_1,a_2,a_3,a_4)=0,
\]
where
\begin{align*}
R_1 &= \Res(c_{10},c_9,\fG), \quad
R_2 = \Res(c_{10}, c_8,\fG), \quad
R_3 = \Res(c_{10},c_7,\fG),
\quad R_4 = \Res(c_{10},c_6,\fG), \\
R_5 &= \Res(R_1,R_2,\fH),
\qquad R_6 = \Res(R_3,R_4,\fH),\qquad
R = \Res(R_5,R_6,\fL),
\end{align*}
again contradicting \eqref{SexticWeird}. \emph{Mathematica} assures us that $R(1,0,0,1) \ne 0$, so in particular $R$ is not the zero polynomial. 

\bigskip

\textbf{Case 2: $a_5 = \fH$ is constant.}
Substituting this into $g(a_5,a_6,y) = 0$ and factorising, we obtain
\[
\cP(a_6,y) = \cF(a_5,a_6) = 0,
\]
where $\cP$ is irreducible over $\bQ$. Following the argument of the previous case, we find that
\[
\deg(P_i) < 2(6-i) \qquad (0 \le i \le 5),
\]
and the reasoning of \S \ref{QuinticNonDegenerate} delivers a contradiction.

\bigskip

The total contribution from non-degenerate quadruples is $O(H^{\eps+ 9/2+ 1/\sqrt{6}})$.

\subsubsection{Degenerate quadruples}

We begin by discussing absolute irreducibility. One can check, using \emph{Mathematica}, that $g$ is dodecic in $a_5,a_6,y$, no matter the specialisation $a_1,\ldots,a_4 \in \bQ$. Thus, by \cite[Chapter V, Theorem 2A]{Schmidt}, there exist integer polynomials 
\[
p_1(a_1,\ldots,a_4), \ldots,p_s(a_1,\ldots,a_4)
\]
of degree $O(1)$ such that if $a_1,\ldots,a_4 \in \bC$ then
\[
g \text{ is not absolutely irreducible } \Leftrightarrow p_i(a_1,\ldots,a_4) = 0 \: (1 \le i \le s).
\]
The next lemma shows that $p_i$ is non-zero for some $i$.

\begin{lemma} If $a_1 = \cdots = a_4 = 0$ then $ \{ g(a_5,a_6,y) = 0 \}$ is absolutely irreducible.
\end{lemma}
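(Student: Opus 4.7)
The plan mirrors the two earlier absolute-irreducibility lemmas of this section. First I would use the \emph{Mathematica} code already developed for Stauduhar's resolvent to evaluate $g(a_5,a_6,y) = \Psi(y; 0,0,0,0,a_5,a_6) \in \bZ[a_5,a_6,y]$ explicitly. Because $\Psi$ is monic of degree $6$ in $y$, the specialised polynomial $g(a_5,a_6,y)$ is monic of degree $6$ in $y$, with coefficients in $\bZ[a_5,a_6]$ depending only on the elementary symmetric functions $a_5 = e_5(\alp)$ and $a_6 = e_6(\alp)$.

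Next I would reduce the question to a bivariate one. Since $g$ is monic in $y$, it is absolutely irreducible as an element of $\overline{\bQ}[a_5,a_6,y]$ if and only if $g(a_5,1,y)$ is absolutely irreducible in $\overline{\bQ}[a_5,y]$: any nontrivial factorisation over $\overline{\bQ}[a_5,a_6,y]$ would, after dehomogenising in $a_6$ via the substitution $a_6=1$, produce a nontrivial factorisation of $g(a_5,1,y)$ (the leading $y$-coefficient being $1$ rules out degeneration at $a_6=1$, and any specialised factor that becomes a unit must already have been a unit). So it is enough to prove that $g(a_5,1,y)$ is absolutely irreducible.

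I would then verify irreducibility of $g(a_5,1,y)$ over $\bQ$ directly in \emph{Mathematica}, using \texttt{IrreduciblePolynomialQ} or \texttt{Factor}. Finally, to upgrade from rational to absolute irreducibility, I would invoke the Newton polygon criterion \cite[Proposition 3]{BCG2010}, exactly as in the two preceding lemmas: list the exponent vectors of the monomials occurring with nonzero coefficient, compute the vertices of their convex hull, and check that the gcd of all coordinates of these vertices equals $1$.

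The main obstacle is the last step: a priori there is no reason the gcd of the vertex coordinates should equal $1$, and if it does not then the Newton polygon criterion is inconclusive and an alternative route (for instance, specialising further to $a_5=1$ and applying a standard irreducibility criterion, or a direct factorisation analysis in $y$ over $\overline{\bQ}(a_5)$) would be needed. Given that the analogous computations went through cleanly for $f_{10}$ and $f_{15}$ after specialising $a_1=\cdots=a_4=0$, I expect the Newton polygon of $g(a_5,1,y)$ to contain a vertex on each coordinate axis, together with a vertex whose coordinates are coprime to those, which would immediately yield absolute irreducibility.
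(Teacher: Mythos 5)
Your approach is exactly the paper's: compute $g(a_5,a_6,y) = \Psi(y;0,0,0,0,a_5,a_6)$ explicitly, use monicity in $y$ to reduce to a bivariate specialisation, verify rational irreducibility in \emph{Mathematica}, and apply the Newton polygon gcd criterion of \cite[Proposition 3]{BCG2010}. The paper happens to specialise $a_5=1$ rather than $a_6=1$, finding Newton polygon vertices $(0,0),(5,2),(5,3),(0,6)$ with coordinate gcd $1$ (so the worry in your final paragraph is moot), and your ``if and only if'' in the reduction step should just be ``if'' — only the direction ``specialisation absolutely irreducible $\Rightarrow$ original absolutely irreducible'' holds or is needed — but these are cosmetic.
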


\begin{proof} Using \emph{Mathematica}, we compute that
\begin{align*}
g(a_5,a_6,y) &= \Psi(y; 0,0,0,0,a_5,a_6) = y^6 - 42 a_5^2 y^5 + 360 a_5^4 y^4 - (1360 a_5^6 - 46656 a_6^5) y^3 \\ &\qquad \qquad \qquad \qquad \qquad \qquad
+ (2640 a_5^8 - 34992 a_5^2 a_6^5)y^2  - 2592 a_5^{10} y + 1024 a_5^{12}.
\end{align*}
As $g$ is monic in $y$, it suffices to prove that
\[
g(1,a_6,y)
= 
y^6 - 42 y^5 + 360  y^4 - (1360 - 46656 a_6^5) y^3 
+ (2640 - 34992 a_6^5)y^2  - 2592  y + 1024 
\]
is absolutely irreducible. \emph{Mathematica} tells us that $g(1,a_6,y)$ is irreducible over $\bQ$. Its Newton polygon is the convex hull of
\[
(0,6),(0,5),(0,4),(5,3),(5,2),(0,1),(0,0),
\]
which has vertices $(0,0),(5,2),(5,3),(0,6)$. As $\gcd(0,0,5,2,5,3,0,6)=1$, absolute irreducibility is secured by \cite[Proposition $3$]{BCG2010}.
\iffalse
Assume for a contradiction that $g_0(a_5,a_6,y)$ is reducible over $\bC$. 

If $g_0$ has a divisor that is linear in $a_6$, then
\[
g_0(a_5,a_6,y) = (A a_6^4 + B a_6^3 + C a_6^2 + D a_6 + E) (F a_6 + G)
\]
for some $A,B,C,D,E,F,G \in \bC[a_5,y]$. Equating coefficients yields
\[
AF = 46656y^3 - 34992 a_5^2y^2,
\qquad AG + BF = BG + CF = CG + DF = DG + EF = 0
\]
and
\[
EG = 
y^6 - 42 a_5^2 y^5 + 360 a_5^4 y^4 - 1360 a_5^6 y^3
+ 2640 a_5^8 y^2  - 2592 a_5^{10} y + 1024 a_5^{12}.
\]
Now $y \nmid EG$. If $y \mid F$ then $y \mid B$, so $y^2 \mid A$, so $y^3 \mid AF$, contradiction. Hence $y \nmid F$, so $y^2 \mid A$, so $y^2 \mid B$, so $y^2 \mid C$, so $y^2 \mid D$, so $y^2 \mid E$, contradiction.

Therefore
\[
g_0(a_5,a_6,y) = 
(A a_6^3 + B a_6^2+ C a_6 + D) (Ea_6^2 + F a_6 + G),
\]
for some $A,B,C,D,E,F,G \in \bC[a_5,y]$. Equating coefficients yields
\[
AE = 46656y^3 - 34992 a_5^2y^2,
\quad AF + BE = AG + BF + CE = BG + CF + DE = CG + DF = 0
\]
and
\[
DG = 
y^6 - 42 a_5^2 y^5 + 360 a_5^4 y^4 - 1360 a_5^6 y^3
+ 2640 a_5^8 y^2  - 2592 a_5^{10} y + 1024 a_5^{12}.
\]

First suppose $y \nmid E$. Then $y \mid A$, so $y \mid B$, so $y \mid C$, so $y \mid D$, contradicting the final equation. Next suppose instead that $y \nmid A$. Then $y \mid E$, so $y \mid F$, so $y \mid G$, contradicting the final equation.

Thus, we must have $y \| A, E$. Now $k:= \ord_y(B) = \ord_y(F) = \ord_y(C)$, as $y \nmid DG$. Therefore
\[
\ord_y(AG) = 1, \qquad \ord_y(BF) = 2k, \qquad \ord_y(CE) = k+1,
\]
contradicting that $AG + BF + CE = 0$.
\fi
\end{proof}

Consequently, the quadruples $(a_1,\ldots,a_4)$ such that $g(a_5,a_6,y)$ is reducible over $\bC$ are all zeros of a fixed non-zero polynomial of degree $O(1)$. In particular, there are $O(H^3)$ such quadruples $(a_1,\ldots,a_4)$.

\bigskip

Next, we discuss lines on the surface, using the trichotomy from the previous two subsections. If there is a line of Type I, then
\[
\Psi( \gam + ct; a_1, \ldots, a_4, t, \bet + bt) = 0,
\]
for some $\bet, \gam, b,c \in \bC$.
The left hand side is $1024 t^{12}$ plus lower order terms, so there are no lines of this type. If there is a line of Type III, then
\[
\Psi(t; a_1, \ldots, a_4, \alp, \bet) = 0,
\]
for some $\alp,\bet \in \bC$.
The left hand side is monic of degree 6, so there are no lines of this type. If there is a line of Type II, then
\[
\Psi( \gam + ct; a_1, \ldots, a_4, \alp, t) = 0
\]
for some $\alp,\gam,c \in \bC$. The coefficient of $t^8$ is
\[
2401 a_1^{12} - 30870 a_1^{10} a_2 + 162729 a_1^8 a_2^2 - 450576 a_1^6 a_2^3 + 
 692064 a_1^4 a_2^4 - 559872 a_1^2 a_2^5 + 186624 a_2^6,
\]
so there are $O(H^3)$ integer quadruples $(a_1,\ldots,a_4)$ such that $\{ g(a_5,a_6,y) = 0 \}$ contains a complex line.

Finally, there are $O(H^3)$ exceptions to \eqref{SexticWeird}. Thus, there are $O(H^3)$ degenerate quadruples, and the reasoning of the previous subsections bounds their contribution by  $O(H^{4+\eps+1/6})$. We conclude that if $G \le H_{120}$ then
\[
N_G \ll H^{9/2+\eps+1/\sqrt{6}}.
\]

This completes the proof of Theorem \ref{SexticThm}.

\section{Auxiliary results}
\label{AuxSection}

\subsection{An almost-uniform version of Hilbert's irreducibility theorem}

\begin{thm} \label{HIT}
Let $g(T,X_1,\ldots,X_s,Y) \in \bZ[T,X_1,\ldots,X_s,Y]$ be irreducible of total degree $D$, and monic of degree $d \ge 2$ in the variable $Y$. Then there are $O_{s,D,\eps}(|g|^{\eps} H^{\eps+1/2})$
integers $t \in [-H,H]$ such that \[
g(t,X_1,\ldots,X_s,Y) \in \bZ[X_1,\ldots,X_s,Y]
\]
is reducible.
\end{thm}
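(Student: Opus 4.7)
The plan is to reduce the problem to a univariate Hilbert-irreducibility statement by specializing the $X_i$ to small integers, and then to count via a separable resolvent combined with the lopsided Bombieri--Pila theorem.

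\textbf{Reduction to a univariate problem.} If $g(t,X_1,\ldots,X_s,Y) = p_1 p_2$ in $\bZ[X_1,\ldots,X_s,Y]$ with both factors monic in $Y$ of positive degree (automatic because $g$ is monic in $Y$), then for every $\bx=(x_1,\ldots,x_s)\in\bZ^s$ the specialization $\tilde g(T,Y) := g(T,\bx,Y)\in\bZ[T,Y]$ inherits a nontrivial factorization. Hence the exceptional $t$ for $g$ lie inside the exceptional $t$ for $\tilde g$ for any $\bx$. I would then produce $\bx\in\bZ^s$ with $\|\bx\|_\infty\ll_{s,D}1$ making $\tilde g\in\bZ[T,Y]$ irreducible and monic of degree $d$ in $Y$. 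Since reducibility of $\tilde g$ over $\bQ(T)$ is equivalent to $\Gal(\tilde g/\bQ(T))$ being contained in an intransitive subgroup, and for each such conjugacy class this is controlled by the vanishing of a resolvent of $g$ regarded over $\bQ(T,X_1,\ldots,X_s)$, the exceptional set in $\bA^s_{\bx}$ is a proper Zariski-closed subvariety of degree $O_{s,D}(1)$; a Schwartz--Zippel counting argument produces such $\bx$ with the additional property $|\tilde g|\ll_{s,D}|g|$.

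\textbf{A separable resolvent.} Let $\alp_1(T),\ldots,\alp_d(T)$ be the roots of $\tilde g$ in $\overline{\bQ(T)}$ and $G=\Gal(\tilde g/\bQ(T))\le S_d$, transitive by irreducibility. For each $k\in\{1,\ldots,d-1\}$, adapting the construction of Lemma~\ref{separable} one obtains a resolvent
\[
\Phi_k(T,Z) = \prod_{|S|=k}\bigl(Z - r_S(T)\bigr)\in\bZ[T,Z],
\]
monic of degree $\binom{d}{k}$ in $Z$ and separable over $\overline{\bQ(T)}$, with $|\Phi_k|\ll_d|\tilde g|^{O_d(1)}$. If $\tilde g(t,Y)$ admits a degree-$k$ factor in $\bZ[Y]$, then the value $r_S(t)$ indexed by the roots of that factor lies in $\bZ$ and is bounded by $|\tilde g|^{O_d(1)}H^{O_d(1)}$ via Cauchy's root bound. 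Factor $\Phi_k=\prod_jR_{k,j}$ in $\bZ[T,Z]$ into primitive irreducible pieces; by Galois theory the $R_{k,j}$ correspond to $G$-orbits on $k$-subsets of $\{1,\ldots,d\}$, and transitivity of $G$ forces every orbit to have size at least $2$, so $\deg_Z R_{k,j}\ge 2$ for every $j$.

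\textbf{Application of lopsided Bombieri--Pila.} Apply Theorem~\ref{LopsidedBP} to each $R_{k,j}\in\bZ[T,Z]$ with $B_1=H$ and $B_2\ll|g|^{O_D(1)}H^{O_D(1)}$. Because $R_{k,j}$ is monic in $Z$ of degree $m_{k,j}\ge 2$, the monomial $Z^{m_{k,j}}$ forces the quantity $T$ in the theorem to satisfy $T\ge B_2^{m_{k,j}}$, so
\[
\frac{\log B_1\log B_2}{\log T}\le\frac{\log H}{m_{k,j}}
\qquad\text{and}\qquad T^\eps\le|g|^{O_D(\eps)}H^{O_D(\eps)}.
\]
After shrinking $\eps$ the Bombieri--Pila bound collapses to $\ll_{D,\eps}|g|^\eps H^{1/m_{k,j}+\eps}\le|g|^\eps H^{1/2+\eps}$. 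Summing over the $O_D(1)$ pairs $(k,j)$ completes the proof.

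\textbf{Main obstacle.} The crux is excluding linear-in-$Z$ factors of $\Phi_k$: such a factor $Z-F(T)$ would contribute $\sim H$ spurious integer points $(t,F(t))$ and wreck the estimate. The exclusion rests on combining separability of $\Phi_k$ (which would translate a rational root $F(T)\in\bQ(T)$ into a $G$-stable $k$-subset) with transitivity of $G$; neither alone is sufficient. The parallel bookkeeping challenge is keeping $|\tilde g|\ll_{s,D}|g|$ rather than allowing a polynomial blow-up, which is why the specialization height in Step~1 must be kept $O_{s,D}(1)$ independent of $|g|$, so that the eventual $|g|^{O_D(\eps)}$ factor in Theorem~\ref{LopsidedBP} can be absorbed into $|g|^\eps$.
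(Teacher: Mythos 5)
Your overall strategy — reduce to a univariate problem, build a separable resolvent whose irreducible factors have degree at least $2$ in the auxiliary variable, then apply Theorem~\ref{LopsidedBP} — matches the paper's. The paper instead uses the coset resolvent of Lemma~\ref{GeneralResolvent} and classifies exceptional $t$ by the (proper) subgroup $K = G_t < G$, but your $k$-subset resolvent indexed by $G$-orbits plays the same role and yields the same degree-$\ge 2$ conclusion via transitivity of $G$; these are interchangeable in this argument.

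There is, however, a genuine gap in your reduction to $s=0$. You assert that the $\bx \in \bA^s$ for which $g(T,\bx,Y)$ is reducible over $\bQ(T)$ form a Zariski-closed subvariety of degree $O_{s,D}(1)$, and then invoke Schwartz--Zippel to find good $\bx$ with $\|\bx\|_\infty \ll_{s,D} 1$. This is not correct: reducibility over $\bQ(T)$ is a thin condition in the sense of Serre, not a Zariski-closed one, so the exceptional set can be Zariski-dense in $\bA^s$ (the model case being a condition like ``$x$ is a square'', which arises from a non-trivial cover). Zariski-closedness would hold if you tested reducibility over $\overline{\bQ}(T)$ instead, but that is a strictly weaker condition than what you need. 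The paper sidesteps this by citing \cite[Theorem 2.1]{Coh1981}, which quantitatively bounds the thin exceptional set and produces integers $x_1,\ldots,x_s \ll |g|^{O_{s,D}(1)}$ with $g(T,x_1,\ldots,x_s,Y)$ irreducible. This also addresses what you flag as the ``parallel bookkeeping challenge'': it is unnecessary, and may not even be possible, to keep $|\tilde g| \ll_{s,D} |g|$; a polynomial blow-up $|\tilde g| \ll |g|^{O_{s,D}(1)}$ is perfectly acceptable because in the final application of Theorem~\ref{LopsidedBP} the factor $|\tilde g|^{\eps'}$ becomes $|g|^{O_{s,D}(\eps')}$, which you absorb into $|g|^\eps$ by shrinking $\eps'$.
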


\begin{remark} The point is that we have exerted strong quantitative control over the dependence on the coefficients of $g$. In S. D. Cohen's work \cite{Coh1981} the dependence is $|g|^{O(1)}$, but in practice it is more useful to have $O(|g|^\eps)$ dependence. As $d \in \{2,3,\ldots,D\}$, any dependence on $d$ is controlled by the arbitrary dependence on $D$.
\end{remark}

\begin{proof}
We begin by reducing to the case $s=0$. By \cite[Theorem 2.1]{Coh1981}, there exist integers $x_1,\ldots,x_s \ll |g|^{O_{s,D}(1)}$ such that $g(T,x_1,\ldots,x_s,Y)$ is irreducible. Suppose $t \in \bZ$ and $g(t,X_1,\ldots,X_s,Y)$ is reducible. Then there exist 
\[
g_1(X_1,\ldots,X_s,Y), 
g_2(X_1,\ldots,X_s,Y) \in \bZ[X_1,\ldots,X_s,Y],
\]
monic of positive degrees in $Y$, such that
\[
g(t,X_1,\ldots,X_s,Y) =
g_1(X_1,\ldots,X_s,Y)
g_2(X_1,\ldots,X_s,Y).
\]
Now
\[
g(t,x_1,\ldots,x_s,Y) =
g_1(x_1,\ldots,x_s,Y)
g_2(x_1,\ldots,x_s,Y)
\]
is a non-trivial factorisation, so we have indeed reduced the problem to the $s=0$ case.

As $g(T,Y) \in \bZ[T,Y]$ is irreducible, its discriminant in $Y$ is a non-zero polynomial in $\bZ[T]$. Hence, there are $O_d(1)$ integers $t$ such that $g(t,Y) \in \bZ[Y]$ is inseparable. Let us denote by $G$ the Galois group of $g(T,Y)$ over $\bQ(T)$.
Given $t \in \bZ$ such that $g(t,Y)$ is separable, its Galois group $G_t$ is a subgroup of $G$ via the embedding in \cite[Lemma 1]{CD2017}. There are $O_d(1)$ possibilities for $G_t$, and if $g(t,Y)$ is reducible then $G_t$ must be a proper subgroup of $G$, since a polynomial is irreducible if and only if its Galois group is transitive.

Let us now fix a proper subgroup $K$ of $G$, and count integers $t \in [-H,H]$ such that $G_t = K$.
Applying Lemma \ref{GeneralResolvent}, we obtain a polynomial $\Phi(T,Y) \in \bZ[T,Y]$ of total degree $O_D(1)$, monic of degree $[S_d:K]$ in $Y$, such that:
\begin{enumerate}[(i)]
\item If $t \in \bZ \cap [-H,H]$ and $G_t = K$ then $\Phi(t,Y) \in \bZ[Y]$ has an integer root
\[
y \ll_D (|g| H)^{O_{D}(1)}.
\]
\item Any irreducible divisor of $\Phi(T,Y)$ has degree at least $[G:K]$ in $Y$.
\end{enumerate}
Finally, by Theorem \ref{LopsidedBP}, any irreducible divisor of $\Phi(T,Y)$ has $O_{D,\eps}(|g|^\eps H^{\eps+1/2})$ integer roots $(t,y)$ with $|t| \le H$ and $y \ll_D (|g| H)^{O_D(1)}$.
\end{proof}

\subsection{A lopsided version of Pila's theorem}

\begin{thm} \label{LopsidedPila}
Let $g(A,B,Y) \in \bZ[A,B,Y]$ be irreducible of total degree $D$, and monic of degree $d \ge 2$ in the variable $Y$. Then $g$ has $O_{D,\eps}(|g|^\eps H^{1 + \eps + 1/d})$ integer roots $(a,b,y)$ with $|a|,|b| \le H$.
\end{thm}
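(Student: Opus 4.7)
The plan is to apply Theorem~\ref{HIT} separately in each of the horizontal variables $A$ and $B$ to isolate small exceptional sets, and then invoke Theorem~\ref{LopsidedBP} on the resulting irreducible specialisations, exploiting the fact that the monomial $Y^d$ is always present.

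Since $g$ is monic in $Y$, any integer root $(a,b,y)$ with $|a|,|b| \le H$ satisfies $|y| \ll_D |g| H^D =: B_Y$. Theorem~\ref{HIT} (applied with $T = A$, $X_1 = B$) produces an exceptional set
\[
E_A = \{a \in \bZ \cap [-H,H] : g(a,B,Y) \in \bZ[B,Y] \text{ is reducible}\}
\]
of size $O_{D,\eps}(|g|^\eps H^{\eps+1/2})$, and an analogous $E_B$ is obtained by swapping the roles of $A$ and $B$. Since $g$ is monic in $Y$, Gauss's lemma upgrades both to irreducibility statements in $\bQ[B,Y]$ and $\bQ[A,Y]$ respectively, which is what Theorem~\ref{LopsidedBP} needs. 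I would then split the integer roots into three classes by the memberships of $a$ in $E_A$ and of $b$ in $E_B$.

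For $a \notin E_A$, Theorem~\ref{LopsidedBP} applied to $g_a := g(a,B,Y)$ with $B_1 = H$, $B_2 = B_Y$ uses the monomial $Y^d$ to conclude $T_a \ge B_Y^d$, whence
\[
\frac{\log H \cdot \log B_Y}{\log T_a} \le \frac{\log H}{d}.
\]
This yields $\ll_{D,\eps} |g|^\eps H^{\eps+1/d}$ integer roots per $a$, for a total of $O_{D,\eps}(|g|^\eps H^{1+\eps+1/d})$ summed over $a$. The symmetric argument (fix $b \notin E_B$ and apply lopsided Bombieri--Pila to $g^b := g(A,b,Y)$) bounds all points with $b \notin E_B$ by the same quantity, covering the second class in particular. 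The doubly exceptional class $E_A \times E_B$ contributes trivially: each pair admits at most $d$ values of $y$, for a total of $\ll_{D,\eps} d|E_A||E_B| \ll |g|^\eps H^{1+\eps}$, which is absorbed into the target bound.

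The main obstacle, and the reason a one-sided HIT is insufficient, is that a reducible $g_a$ may split off linear-in-$Y$ factors of the form $Y - \phi_a(B)$, each of which can contribute as many as $2H+1$ integer points; a one-sided analysis therefore loses $O(|g|^\eps H^{3/2+\eps})$, which is fatal for $d \ge 3$. Applying HIT in both variables confines the uncontrolled region to the small Cartesian product $E_A \times E_B$, where the trivial fibre bound of $d$ suffices.
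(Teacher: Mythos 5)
Your proposal is correct and takes a genuinely different route from the paper's. The paper slices only in $b$ and classifies each specialisation $g(A,b,Y)$ as good (irreducible), bad (reducible with no linear divisor), or superbad (possessing a linear factor $Y - tA$). For superbad $b$ the per-$b$ fibre bound is $O(H)$, so HIT's $O(|g|^\eps H^{\eps+1/2})$ estimate on the exceptional set would be too weak for $d \ge 3$; the paper therefore observes instead that superbad $b$ give rise to integer points on the irreducible curve $g(a_0,B,Y)=0$ for a Cohen-type specialisation $a_0 \ll |g|^{O(1)}$, which yields the sharper count $O(|g|^\eps H^{\eps+1/d})$ via Theorem~\ref{LopsidedBP}, just enough to beat the $O(H)$ fibre. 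You dispense with the trichotomy and the sharpened superbad estimate entirely: applying HIT symmetrically in both $A$ and $B$ confines the hard region to the product $E_A \times E_B$, of size $O(|g|^\eps H^{1+\eps})$, where the trivial fibre bound of at most $d$ roots per $(a,b)$ suffices. This buys simplicity and symmetry at the negligible cost of a second application of HIT, and both routes yield the same exponent. Your Gauss's-lemma remark is correct (monicity of $g$ in $Y$ makes each specialisation primitive, so reducibility over $\bZ$ and over $\bQ$ coincide), and your use of the always-present monomial $Y^d$ to force $T$ to be at least the $d$-th power of the $y$-range in Theorem~\ref{LopsidedBP} is exactly the mechanism that drives the bound in both arguments.
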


The bound is essentially sharp, for example if $g(a,b,y) = a - y^d$ then there are at least a positive constant times $H^{1+1/d}$ solutions. Before proceeding towards the proof, we provide some context. By \cite[Lemma 1]{Die2012}, if $|a|, |b| \le H$ and $g(a,b,y) = 0$ then $y \ll_D |g| H^D$. If $g$ were irreducible over $\bR$, then an application of Pila's Theorem \ref{PilaThm} would reveal that $g$ has $O_{D,\eps}(H^{1+\eps+1/d})$ integer roots $(a,b,y)$ with $|a|,|b|,|y| \le H$. Theorem \ref{LopsidedPila} is a variant of this for which $y$ is not constrained to lie in $[-H,H]$.

\begin{proof}
Let $C = C_D$ be a large, positive constant, so that if $|a|,|b| \le H$ and $g(a,b,y) = 0$ then $|y| \le C |g| H^{D}$. We classify $b \in \bZ$ as being:
\begin{itemize}
\item \emph{good}, if $g(A, b, Y) \in \bZ[A,Y]$ is irreducible;
\item \emph{bad}, if $g(A, b, Y) \in \bZ[A,Y]$ is reducible but has no linear divisor;
\item \emph{superbad}, if $g(A, b, Y) \in \bZ[A,Y]$ has a linear divisor.
\end{itemize}

Let $b \in [-H,H]$ be good. In Theorem \ref{LopsidedBP} we have $T \ge (C |g| H^{D})^d$, so there are at most $O(|g|^\eps H^{\eps + 1/d})$ integer zeros $(a,y) \in [-H,H] \times [-C|g| H^D, C |g| H^D]$. Thus, there are at most $O(H^{1 + \eps + 1/d})$ integer triples $(a,b,y) \in [-H,H]^2 \times [-C|g| H^D, C |g| H^D]$ such that $b$ is good and $g(a,b,y) = 0$.

Next, let us suppose instead that $b \in [-H, H]$ is bad. By \cite[Theorem 2.1]{Coh1981}, there exists an integer $a_0 \ll |g|^{O(1)}$ such that $g(a_0,B,Y) \in \bZ[B,Y]$ is irreducible. As $b$ is bad and $g$ is monic in $Y$, the polynomial $g(a_0,b,Y) \in \bZ[Y]$ is reducible, so by Theorem \ref{HIT} there are $O(|g|^\eps H^{\eps+1/2})$ such choices of $b$. Let $h(A,Y) \in \bZ[A,Y]$ be an irreducible divisor of $g(A,b,Y)$, and note from the choice of $b$ that $\deg(h) \ge 2$. By Theorem \ref{LopsidedBP}, there are $O(H^{\eps+1/2})$ zeros $(a,y) \in [-H,H] \times [-C|g| H^D, C |g| H^D]$ of $h$. Whence, there are $O(|g|^\eps H^{1+\eps})$ integer triples $(a,b,y) \in [-H,H]^2 \times \bR$ such that $b$ is bad and $g(a,b,y) = 0$.

Finally, we bound the number $\cN$ of superbad integers $b \in [-H,H]$, and use this to estimate their contribution to the number of roots of $g$ that we are counting. By \cite[Theorem 2.1]{Coh1981}, choose an integer $a_0 \ll |g|^{O(1)}$ such that $g(a_0,B,Y) \in \bZ[B,Y]$ is irreducible. If $b$ is superbad then $g(A,b,Y) \in \bZ[A,Y]$ has a divisor of the form $Y - tA$, for some $t \in \bZ$, and in particular $ g(a_0,b,Y) \in \bZ[Y]$ has an integer root. Thus, by Theorem \ref{LopsidedBP}, we have
\[
\cN \le \# \{ (b,y) \in \bZ^2: |b| \le H, \: |y| \le C|g| H^D, \: g(a_0,b,y) = 0 \}
\ll |g|^\eps H^{\eps+1/d}.
\]
There are $O(H)$ possibilities for $a$, and then there are at most $d$ possibilities for $y$ such that $h(a,b,y) = 0$. Hence, there are $O(|g|^\eps H^{1+\eps+1/d})$ integer triples $(a,b,y) \in [-H,H]^2 \times \bR$ such that $b$ is superbad and $h(a,b,y) = 0$.

We have considered all cases, and conclude that there are $O(|g|^\eps H^{1+\eps+1/d})$ integer triples $(a,b,y) \in [-H,H]^2 \times \bR$ such that $h(a,b,y) = 0$. 
\end{proof}

\subsection{Algebraic toolkit}

In the course of our treatment of higher-degree polynomials, we will work with Galois groups over two-parameter function fields, using the method of Uchida \cite{Uch1970}, Smith \cite{Smi1977} and Cohen \cite{Coh1972, Coh1980}. Here we review some of the theory, following Cohen \cite{Coh1972, Coh1980}. A polynomial is \emph{normal} if its factor type is $(1,\ldots,1)$ or $(1,\ldots,1,2)$. The following criterion is similar to \cite[Lemma 2]{Coh1972}. 

\begin{lemma} [Normality criterion] \label{NormalityCriterion} Let $\cG(X), \cH(X) \in \bC(X) \setminus \{ 0 \}$ be coprime polynomials, and put $r = \cG/\cH$. Then $f := \cG + b\cH$ is normal for all $b \in \bC$ if the following two statements hold:
\begin{itemize}
\item The system 
\begin{equation} \label{FirstSystem}
r'(x) = r''(x) = 0, \qquad \cH(x) \ne 0
\end{equation}
has no solution $x \in \bC$.
\item The system
\begin{equation} \label{SecondSystem}
r(x) - r(y) = r'(x) = r'(y) = 0, \qquad x \ne y, \qquad \cH(x)\cH(y) \ne 0
\end{equation}
has no solution $(x,y) \in \bC^2$.
\end{itemize}
\end{lemma}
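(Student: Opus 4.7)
The plan is to argue by contrapositive: if $f = \cG + b\cH$ fails to be normal for some $b \in \bC$, then one of the two forbidden systems \eqref{FirstSystem}, \eqref{SecondSystem} admits a solution. Unpacking the definition, $f$ is non-normal precisely when $f$ has a root of multiplicity at least $3$, or two distinct roots each of multiplicity at least $2$. So it suffices to rule out these two scenarios under the stated hypotheses.

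First I would record a preliminary observation: any root $x$ of $f$ must satisfy $\cH(x) \ne 0$, for otherwise $\cG(x) + b\cH(x) = 0$ together with $\cH(x) = 0$ would force $\cG(x) = 0$, contradicting the coprimality of $\cG$ and $\cH$ in $\bC[X]$.

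The core of the proof is the identity $f = (r+b)\cH$, from which direct differentiation yields
\begin{align*}
f' &= r'\cH + (r+b)\cH', \\
f'' &= r''\cH + 2r'\cH' + (r+b)\cH''.
\end{align*}
At any root $x$ of $f$, the vanishing $(r(x)+b)\cH(x) = 0$ combined with $\cH(x) \ne 0$ forces $b = -r(x)$, giving $f'(x) = r'(x)\cH(x)$; if additionally $f'(x) = 0$, then $r'(x) = 0$, and the second identity collapses to $f''(x) = r''(x)\cH(x)$. Since $\cH(x) \ne 0$, the vanishing of $f'$ (respectively $f''$) at such an $x$ is equivalent to the vanishing of $r'$ (respectively $r''$).

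The conclusion is then immediate. A triple root $x$ of $f$ produces $r'(x) = r''(x) = 0$ with $\cH(x) \ne 0$, a solution to \eqref{FirstSystem}. Two distinct roots $x \ne y$ of $f$, each of multiplicity at least $2$, yield $\cH(x)\cH(y) \ne 0$, $r'(x) = r'(y) = 0$, and $r(x) = -b = r(y)$, a solution to \eqref{SecondSystem}. The only step demanding care is identifying the right algebraic identity $f = (r+b)\cH$ and differentiating it; once this is in hand, the rest is bookkeeping.
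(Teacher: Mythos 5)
Your proof is correct and follows essentially the same contrapositive strategy as the paper's. The only cosmetic difference is that you differentiate the product $f = (r+b)\cH$ directly, whereas the paper applies the quotient rule to $r = \cG/\cH$ and substitutes $\cG^{(k)}(x) = -b\cH^{(k)}(x)$ into the numerators; the two computations are equivalent.
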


\begin{proof} Let $b \in \bC$. First suppose $f$ has a triple root $x \in \bC$. Then
\[
f(x) = f'(x) = f''(x) = 0, \qquad \cH(x) \ne 0,
\]
since $\cG$ and $\cH$ are coprime, and so
\[
\cG(x) = -b \cH(x), \qquad \cG'(x) = -b\cH'(x), \qquad \cG''(x) = -b\cH''(x).
\]
By the quotient rule
\[
r'(x) = \frac{\cH(x) \cG'(x) - \cG(x) \cH'(x)}{\cH(x)^2} = 0
\]
and
\begin{align*}
r''(x) &= \frac{\cH(x)^2 (\cH(x)\cG''(x) - \cG(x)\cH''(x))
- 2\cH(x) \cH'(x)(\cH(x) \cG'(x) - \cG(x) \cH'(x))
}{\cH(x)^4}\\
&= 0.
\end{align*}
In this case $x \in \bC$ is a solution to \eqref{FirstSystem}.

Now suppose $x,y \in \bC$ are distinct double roots of $f$. Then
\[
f(x) = f'(x) = f(y) = f'(y) = 0, \qquad \cH(x)\cH(y) \ne 0,
\]
since $\cG$ and $\cH$ are coprime, and so
\[
\cG(x) = - b\cH(x), \quad \cG'(x) = -b\cH'(x), \quad
\cG(y) = -b\cH(y), \quad \cG'(y) = -b\cH'(y).
\]
By the quotient rule $r'(x) = r'(y) = 0$. Moreover, we have
\[
r(x) - r(y) = -b + b = 0.
\]
In this case $(x,y) \in \bC^2$ is a solution to \eqref{SecondSystem}.
\end{proof}

The following standard consequence of B\'ezout's theorem is a special case of \cite[Lemma~3]{Coh1972}.

\begin{lemma} \label{Bezout} Let $n \in \bN$, and let $f(X,Y), g(X,Y) \in \bC(X,Y)$ be coprime polynomials of degree at most $n$. Then there are $O_n(1)$ solutions $(x,y) \in \bC^2$ to
\[
f(x,y) = g(x,y).
\]
\end{lemma}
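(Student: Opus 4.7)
\medskip

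The natural approach is to interpret the equation $f(x,y)=g(x,y)$ in the intended sense of a common zero locus (i.e.\ $f(x,y)=g(x,y)=0$, as otherwise $f=g$ cuts out a curve of positive dimension), and to apply B\'ezout's theorem. The plan is to argue that the hypothesis of coprimality in $\bC[X,Y]$ transfers to the projective setting, so that the affine intersection is contained in the finite scheme-theoretic intersection in $\bP^2$.

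First I would clear denominators and reduce to the case $f,g\in\bC[X,Y]$; the coprimality hypothesis is then that $f$ and $g$ share no common irreducible factor in the UFD $\bC[X,Y]$. Homogenising $f$ and $g$ to polynomials $F,G\in\bC[X_0,X_1,X_2]$ of degrees $d_1,d_2\le n$, a common irreducible component of $V(F),V(G)\subset\bP^2$ would, after dehomogenisation, yield a common irreducible factor of $f$ and $g$ in $\bC[X,Y]$ (or else be supported on the line at infinity, in which case $X_0$ would divide both $F$ and $G$, contradicting that $F,G$ are homogenisations of polynomials $f,g$ that do not vanish identically in the affine chart). Hence $V(F)$ and $V(G)$ share no common component, and the classical B\'ezout theorem bounds
\[
\#\bigl(V(F)\cap V(G)\bigr)\le d_1 d_2\le n^2,
\]
so the affine common zero set has at most $n^2=O_n(1)$ elements.

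As an alternative, one could bypass B\'ezout and use resultants directly. Viewing $f,g$ as elements of $\bC(X)[Y]$, Gauss's lemma (applied to the contents in $\bC[X]$) transfers coprimality in $\bC[X,Y]$ to coprimality in $\bC(X)[Y]$. Then $R(X):=\Res_Y(f,g)\in\bC[X]$ is a non-zero polynomial of degree $O_n(1)$, and every common zero $(x,y)$ forces $R(x)=0$, giving $O_n(1)$ possibilities for $x$; for each such $x$, the value $y$ is determined by $f(x,Y)=0$ up to at most $n$ choices (or $g(x,Y)=0$ in the degenerate case where $f(x,Y)$ vanishes identically, which in turn forces $(X-x)$ to divide the content of $f$ and is handled separately). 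Either route yields the $O_n(1)$ bound.

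There is no substantive obstacle here; the mildly technical point is the translation between the two notions of coprimality (in $\bC[X,Y]$ versus in $\bC(X)[Y]$, or equivalently, checking that no common component of the projective closures is hidden at infinity), which is handled cleanly by Gauss's lemma or a direct homogenisation argument as above.
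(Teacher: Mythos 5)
Your proof is correct and follows precisely the route the paper invokes — the paper offers no argument of its own, describing the lemma as a ``standard consequence of B\'ezout's theorem'' and citing Cohen's Lemma~3, and your homogenisation-plus-B\'ezout argument (with the resultant computation as an equally standard equivalent) is exactly that route made explicit. You also correctly resolve two small typos in the statement: $\bC(X,Y)$ should read $\bC[X,Y]$, and the equation is intended as the common-zero condition $f(x,y)=g(x,y)=0$ (the literal reading $f=g$ cuts out a curve with infinitely many points), both of which are confirmed by how the lemma is applied later in the paper.
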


If $P(X) \in \bC[X]$ and $Q(X) \in \bC[X] \setminus \{ 0 \}$ are coprime and $R = P/Q$, we write
\[
\deg(R) = \max \{ \deg(P), \deg(Q) \},
\]
as well as
\[
B_R(X,Y) = Q(X) P(Y) - P(X) Q(Y) \in \bC[X,Y]/\sim\]
and
\[
B_R^*(X,Y) = \frac{B_R(X,Y)}{X-Y} \in \bC[X,Y]/\sim,
\]
where the equivalence relation is multiplication by a unit.

\begin{lemma} \label{TechnicalLemma} For $i=1,2$, let $P_i(X) \in \bC[X]$ and $Q_i(X) \in \bC[X] \setminus \{ 0 \}$ be relatively prime, and suppose
\[
x,y \in \bC, \qquad x \ne y, \qquad Q_1(x)Q_1(y)Q_2(x)Q_2(y) \ne 0.
\]
Put $f_i = P_i/Q_i$ for $i=1,2$, and assume that
\[
f_1(x) f_2(y) = f_1(y) f_2(x), \qquad f_2(x)f_2(y) \ne 0.
\]
Then, with $R = f_1/f_2$, we have $B^*_R(x,y) = 0$.
\end{lemma}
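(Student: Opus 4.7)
The plan is to reduce everything to the lowest-terms representation $R = P/Q$, after which the hypothesis will become exactly the vanishing statement $B_R(x,y)=0$ (and then factor out $x-y$).

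First I would set $d = \gcd(P_1 Q_2, P_2 Q_1) \in \bC[X]$, and write
\[
P_1 Q_2 = P \cdot d, \qquad P_2 Q_1 = Q \cdot d,
\]
so that $R = f_1/f_2 = P/Q$ with $\gcd(P,Q)=1$. This is the representative governing the definition of $B_R$ and $B_R^*$ (up to units, which is the equivalence under which these are defined).

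Next I would rewrite the hypothesis. Since $Q_1(x)Q_1(y)Q_2(x)Q_2(y) \ne 0$, the equation $f_1(x)f_2(y)=f_1(y)f_2(x)$ may be cleared of denominators, yielding
\[
P_1(x) Q_2(x) \cdot P_2(y) Q_1(y) \;=\; P_1(y) Q_2(y) \cdot P_2(x) Q_1(x),
\]
which, substituting $P_1 Q_2 = Pd$ and $P_2 Q_1 = Qd$, becomes
\[
P(x) Q(y) \, d(x) d(y) \;=\; P(y) Q(x) \, d(x) d(y).
\]

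The one point that needs a sentence of justification is that $d(x) d(y) \ne 0$; this follows because $d$ divides $P_2 Q_1$, and the hypotheses $f_2(x)f_2(y)\ne 0$ together with $Q_1(x)Q_1(y)\ne 0$ give $P_2(x)Q_1(x) \ne 0$ and $P_2(y)Q_1(y)\ne 0$, so neither $x$ nor $y$ is a root of $d$. Dividing through by $d(x)d(y)$ yields
\[
Q(x) P(y) - P(x) Q(y) \;=\; B_R(x,y) \;=\; 0.
\]

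Finally, since $B_R(X,X) = Q(X)P(X) - P(X)Q(X) = 0$, the polynomial $X-Y$ divides $B_R(X,Y)$, so $B_R(X,Y) = (X-Y) B_R^*(X,Y)$. As $x \ne y$, we conclude $B_R^*(x,y)=0$, as required. The proof is essentially a bookkeeping exercise; the only non-trivial observation is the verification that the gcd $d$ does not vanish at $x$ or $y$, which is precisely where the non-vanishing hypotheses on $Q_1,Q_2,P_2$ enter.
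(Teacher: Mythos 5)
Your proof is correct and takes essentially the same approach as the paper: both reduce $R$ to lowest terms and verify that the cancelled common factor does not vanish at $x$ or $y$. The only cosmetic difference is that the paper extracts $g=\gcd(P_1,P_2)$ and $h=\gcd(Q_1,Q_2)$ separately (so that $d=gh$, $P=p_1q_2$, $Q=p_2q_1$ in your notation), whereas you work directly with $d=\gcd(P_1Q_2,P_2Q_1)$ — the two decompositions coincide.
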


\begin{proof} Put 
\[
g = (P_1, P_2), \qquad P_1 = gp_1, \qquad P_2 = gp_2
\]
and 
\[
h = (Q_1,Q_2), \qquad Q_1 = hq_1, \qquad Q_2 = hq_2.
\]
Then
\[
\frac{g(x) g(y) p_1(x) p_2(y)}{h(x) h(y) q_1(x) q_2(y)} =
\frac{g(x) g(y) p_1(y) p_2(x)}{h(x) h(y) q_1(y) q_2(x)}.
\]
As $g(x)g(y) \mid P_2(x)P_2(y)$, we have $g(x)g(y) \ne 0$, so
\[
R(x) = \frac{p_1(x)q_2(x)}{q_1(x)p_2(x)} = \frac{p_1(y) q_2(y)}{q_1(y)p_2(y)} = R(y),
\]
and so $B_R(x,y) = 0$. As $x \ne y$, we finally have $B_R^*(x,y) = 0$.
\end{proof}

The following lemma is essentially \cite[Lemma 4]{Coh1972}.

\begin{lemma} \label{CohenFieldTheory} 
Let $R_1(X) \in \bC(X)$, and suppose $D(X,Y) \in \bC[X,Y]$ is a non-constant divisor of $B_{R_1}^*(X,Y)$. Then there exists $R(X) \in \bC(X)$ with the following properties:
\begin{enumerate}[(i)]
\item If $R_2(X) \in \bC(X)$, then $D(X,Y)$ divides $B_{R_2}^*(X,Y)$ if and only if $R_2(X) \in \bC(R(X))$. \item $\deg(R) \ge 2$. 
\item $R = P/Q$, where $P(X),Q(X) \in \bC[X] \setminus \{ 0 \}$ and $\deg(P) > \deg(Q)$.
\end{enumerate}
\end{lemma}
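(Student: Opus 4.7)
The plan is to characterise the set
\[
\cS = \{ R_2(X) \in \bC(X) : D(X,Y) \mid B_{R_2}^*(X,Y) \text{ in } \bC[X,Y] \}
\]
and show it is a subfield of $\bC(X)$ with $\bC \subsetneq \cS \subsetneq \bC(X)$, so that L\"uroth's theorem produces $R \in \bC(X)$ with $\cS = \bC(R)$; this immediately yields (i).

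For the subfield structure, we may assume $R_1$ is non-constant (otherwise $B_{R_1}^* = 0$ and the divisibility hypothesis on $D$ is vacuous). A short argument then shows that $D$ has no irreducible factor in $\bC[X]$ or $\bC[Y]$ alone, and is coprime to $X-Y$: for instance, if $X - \alp \mid D$ then $B_{R_1}^*(\alp, Y) = 0$, forcing $P_1(\alp) = Q_1(\alp) = 0$ and contradicting $\gcd(P_1,Q_1)=1$. Writing $R_i = P_i/Q_i$ in lowest terms, direct manipulation yields identities such as
\[
B_{R_2+R_3} = Q_3(X)Q_3(Y) B_{R_2} + Q_2(X)Q_2(Y) B_{R_3},
\]
\[
B_{R_2 R_3} = Q_3(X) P_3(Y) B_{R_2} + P_2(X) Q_2(Y) B_{R_3},
\]
and $B_{1/R_2} = -B_{R_2}$, with the analogous identities for $B^*$ after dividing by $X-Y$. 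The subtlety is that $R_2+R_3 = (P_2Q_3+P_3Q_2)/(Q_2Q_3)$ and $R_2R_3 = P_2P_3/(Q_2Q_3)$ need not be in lowest terms, but any common factor so cleared lies in $\bC[X]$; since $D$ has no univariate factors, divisibility by $D$ is preserved after reduction. Hence $\cS$ is closed under $+, \times$, and inverses, so it is a subfield of $\bC(X)$ containing the non-constant $R_1$, and in particular $\cS \supsetneq \bC$.

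To see $\cS \ne \bC(X)$, take $R_2 = X$: then $B_X(X,Y) = Y - X$, so $B_X^* = -1$, which is not divisible by the non-constant $D$. Hence $X \notin \cS$, and L\"uroth's theorem supplies $R \in \bC(X)$ with $\cS = \bC(R)$ and $\deg R = [\bC(X):\bC(R)] \ge 2$, establishing (i) and (ii).

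Finally, for (iii), we use the fact that $\bC(R) = \bC(\phi(R))$ for any non-trivial M\"obius transformation $\phi$. Write $R = P/Q$ in lowest terms: if $\deg P < \deg Q$, replace $R$ by $R^{-1}$; if $\deg P = \deg Q$ with leading coefficient ratio $c$, replace $R$ by $(R-c)^{-1}$, producing a new numerator of degree $\deg Q$ and a new denominator of strictly smaller degree. Either way we arrive at a representation with $\deg P > \deg Q$. The main obstacle is verifying the subfield structure cleanly: the identities themselves are routine, but the passage from divisibility of the unreduced $B_R^*$ to divisibility of the reduced $B_R^*$ genuinely requires the structural observation that $D$ has no factor in $\bC[X]$ or $\bC[Y]$.
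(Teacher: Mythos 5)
Your proof is correct, but it takes a genuinely different route from the paper's. The paper follows Cohen's Galois-theoretic approach: it forms the splitting field $E$ of $f_1(Y) - R_1(X)g_1(Y)$ over $\bC(R_1(X))$, identifies the roots of $D$ with images of $X$ under certain Galois automorphisms, shows that $D \mid B_{R_2}^*$ is equivalent to $R_2(X)$ lying in the fixed field $E^S$ of the set $S$ of automorphisms sending $X$ into $\{X,\alp_1,\ldots,\alp_u\}$, and applies L\"uroth to $E^S$. You instead show directly that $\cS = \{ R_2 : D \mid B_{R_2}^*\}$ is a subfield of $\bC(X)$ via the explicit identities for $B_{R_2+R_3}$, $B_{R_2R_3}$, $B_{1/R_2}$, treating the reduce-to-lowest-terms issue by observing that the cleared factors are univariate and hence coprime to $D$; L\"uroth then applies to $\cS$. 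Your route is more elementary and self-contained, while the paper's is more conceptual and closer to Cohen's original Lemma 4 (which in particular delivers the characterisation of $\cS$ as a fixed field for free, without needing the $B$-identities). One small gap in your write-up: you assert $D$ is coprime to $X-Y$, but your ``for instance'' argument only rules out univariate factors; the separate justification is that $(X-Y)^2 \nmid B_{R_1}$ because the derivative $\partial_Y B_{R_1}|_{Y=X} = Q_1P_1' - P_1Q_1'$ is $Q_1^2 R_1'$, which is non-zero since $R_1$ is non-constant. Both proofs finish part (iii) by the same M\"obius-normalisation trick.
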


\begin{proof} Let $\alp_1,\ldots,\alp_u \in \overline{\bC(X)}$ be the roots of $D(X,Y)$, regarded as a polynomial in $Y$ with coefficients in $\bC[X]$, and let $\alp_{u+1},\ldots,\alp_v \in \overline{\bC(X)}$ be the other roots of $B_{R_1}^*(X,Y)$. Set
\[
E = \bC(X, \alp_1, \ldots, \alp_v),
\]
and write $R_1(Y) = f_1(Y)/g_1(Y)$ with $f_1(Y),g_1(Y) \in \bC[Y]$ relatively prime. The polynomial 
\[
f_1(Y) - R_1(X) g_1(Y) \in \bC(R_1(X))[Y]
\]
is irreducible over $\bC(R_1(X))$, and is therefore separable over this perfect field. Therefore $E$, being its splitting field, is a Galois extension of $\bC(R_1(X))$, and moreover $X,\alp_1,\ldots,\alp_v$ are pairwise distinct. Let $G$ be the Galois group of $E$ over $\bC(R_1(X))$, and denote by $S$ the set of $\sig \in G$ such that 
\[
\sig(X) \in \{ X, \alp_1, \ldots, \alp_u \}.
\]
As $S$ contains the stabiliser of $X$, the field $E_1 := E^S$ lies between $\bC$ and $\bC(X) = E^{\mathrm{Stab}(X)}$. Thus, by L\"uroth's theorem \cite[Chapter 4, Theorem 6.8]{Coh1991}, there exists $R(X) \in \bC(X)$ such that $E_1 = \bC(R(X))$. Moreover, as $R_1(X) \in E_1 \setminus \bC$, we know that $R(X)$ is non-constant.

As $f_1(Y) - R_1(X) g_1(Y)$ is irreducible over $\bC(R_1(X))$, its Galois group $G$ acts transitively on its roots
$X, \alp_1,\ldots, \alp_v$. Thus, for each $i \in \{1,2,\ldots,u\}$ there exists $\sig_i \in S$ such that $\sig_i(X) = \alp_i$. Finally, for $R_2(X) \in \bC(X)$, we have
\begin{align*}
D(X,Y) \mid B_{R_2}^*(X,Y) 
&\Leftrightarrow B_{R_2}(X,\alp_i) = 0 \qquad (1 \le i \le u) \\
&\Leftrightarrow R_2(X) = R_2(\alp_i) \qquad (1 \le i \le u) \\
&\Leftrightarrow R_2(X) \in E^S = \bC(R(X)).
\end{align*}

To see that $\deg(R) \ge 2$, observe that $D(X,Y)$ does not divide $B_X^*(X,Y) = 1$. Therefore $X \notin \bC(R(X))$, so $\deg(R) \ne 1$. As $R$ is not constant and $\deg(R) \ne 1$, we must have $\deg(R) \ge 2$.

For the third property, we can swap $P$ and $Q$ if necessary to ensure that $\deg(P) \ge \deg(Q)$. If $\deg(P) = \deg(Q)$ then we can subtract a linear multiple of $P$ from $Q$ to reduce the degree of $Q$ below that of $P$, securing the third property. All of this leaves $\bC(R(X))$ unchanged, so our modified rational function $R$ still has the first two properties.
\end{proof}

We also require the following standard, elementary fact.

\begin{lemma} \label{FullSymmetric} If a group $G$ acts doubly-transitively on $\{ 1, 2, \ldots, n \}$ and contains a transposition, then $G$ is the full symmetric group.
\end{lemma}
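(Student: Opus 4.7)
The plan is to show that $G$ contains every transposition in $S_n$, and then invoke the standard fact that the transpositions generate $S_n$.

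First I would fix the given transposition $\tau = (a\,b) \in G$, where $a \neq b$ lie in $\{1,2,\ldots,n\}$. Given any pair $i \neq j$ in $\{1,2,\ldots,n\}$, the hypothesis of double transitivity furnishes some $\sigma \in G$ with $\sigma(a) = i$ and $\sigma(b) = j$. Conjugation of a transposition by a permutation acts on its support by relabeling, so
\[
\sigma \tau \sigma^{-1} = (\sigma(a)\,\,\sigma(b)) = (i\,\,j),
\]
and this element lies in $G$ because $G$ is a group containing both $\sigma$ and $\tau$.

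Hence $G$ contains every transposition in $S_n$. Since the transpositions generate $S_n$ (any permutation decomposes as a product of transpositions), we conclude $G = S_n$, as required. There is no real obstacle here; the only subtlety is making sure the double transitivity is applied to the ordered pair $(a,b)$ mapping to the ordered pair $(i,j)$, which is exactly the statement of $2$-transitivity.
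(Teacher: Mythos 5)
Your proof is correct and takes essentially the same approach as the paper: conjugate the given transposition by elements of $G$, using double transitivity to hit every ordered pair, and conclude that $G$ contains all transpositions and hence equals $S_n$.
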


\begin{proof} By symmetry, we may suppose that $n \ge 2$ and $(12) \in G$. Let $i,j \in \{1,2,\ldots,n\}$ be distinct. As $G$ acts doubly-transitively, there exists $\sig \in G$ with $\sig(1) = i$ and $\sig(2) = j$. Now $(ij) = \sig (12) \sig^{-1} \in G$.
\end{proof}

\section{Higher-degree polynomials}
\label{HigherSection}

In this section, we establish Theorem \ref{HigherThm}. Let us start by showing that $d \ge 240$, which is the first assertion of the theorem. By \cite[Lemma 3]{Die2012}, if $n \ge 12$ then
\[
d \ge \frac{1}{2} \binom{n}{\lfloor n/2 \rfloor} \ge 462.
\]
We see from the classification by Butler and McKay \cite{BM1983} that if $n \in \{9,11\}$ then $d \ge 240$. In particular, we have secured the inequality $d \ge 240$ in all cases covered by the theorem. It remains to show that
\begin{equation} \label{FinalBound}
N_{G,n} \ll_{n,\eps} H^{n+\eps-3/2+3d^{-1/3}}.
\end{equation}

We use the notation \eqref{fdefGeneral}.

\subsection{Non-degenerate tuples}

We wish to count monic, irreducible polynomials $f$ of degree $n$, with integer coefficients in $[-H,H]$, for which $G_f = G$. Recalling \eqref{PhiDef} and \eqref{ybound}, given such a polynomial $f$, the associated resolvent $\Phi$ has an integer root $y \ll_n H^{O_n(1)}$. To ease notation, we write
\[
a = a_{n-2}, \qquad b = a_{n-1}, \qquad c = a_n,
\]
so that
\begin{equation} \label{fdefabc}
f(X) = X^n + a_1 X^{n-1} + \cdots + a_{n-3} X^3 + aX^2 + bX + c.
\end{equation}

Let us choose 
\[
(a_1, \ldots, a_{n-3}) \in (\bZ \cap [-H,H])^{n-3}\]
non-degenerate, meaning that 
\begin{equation} \label{weird}
-2 a_3 \ne \binom{n}{3} (-2a_1/n)^3 + \binom{n-1}{2}(-2a_1/n)^2 a_1 + (n-2)(-2a_1/n) a_2
\end{equation}
and the vanishing of $g(a, b, c,y) = \Phi(y; a_1,\ldots,a_{n-3}, a, b, c)$ cuts out an absolutely irreducible affine threefold $\cY = \cY_{a_1,\ldots,a_{n-3}}$. By Theorem \ref{HBthm}, there exist 
\[
g_1(a, b, c, y), \ldots, g_J(a,b,c,y) \in \bZ[a,b,c,y],
\]
coprime to $g(a,b,c,y)$ and of degree $O_{n,\eps}(1)$, where $J \ll H^{\eps + 3d^{-1/3}}$, such that if $(a,b,c,y) \in \cY$ and
\[
|a|,|b|,|c| \le H, \qquad y \ll_n H^{O_n(1)}
\]
then
\begin{equation} \label{surface}
g(a,b,c,y) = g_j(a,b,c,y) = 0
\end{equation}
for some $j$. Now we fix $j$ and count solutions to \eqref{surface}.

Note that $\deg_y(g) = d > 0$. If $\deg_y(g_j) = 0$ then let $F(a,b,c) = g_j(a,b,c,y)$, and otherwise let $F(a,b,c)$ be the resultant of $g$ and $g_j$ in the variable $y$.
By \cite[Chapter 3, \S 6, Proposition 3]{CLO2015}, applied with $k = \bQ(a,b,c)$, this is a non-zero element of $\bZ[a,b,c]$. By \cite[Chapter 3, \S 6, Proposition 5]{CLO2015}, we have $F(a,b,c) = 0$ for any solution $(a,b,c,y)$ to \eqref{surface}.

Next, we factorise over the reals. For $a,b,c \in \bZ$, observe that $F(a,b,c) = 0$ if and only if we have $\cF(a,b,c) = 0$ for some irreducible divisor $\cF(a,b,c) \in \bR[a,b,c]$ of $F(a,b,c)$. By Theorem \ref{PilaThm}, if $\cF$ is non-linear then
\[
\# \{ (a,b,c) \in (\bZ \cap [-H,H])^3: \cF(a,b,c) = 0 \} \ll H^{\eps + 3/2}.
\]
Then $y$ is determined from $g(a,b,c,y) = 0$ in at most $d$ ways, so the number of solutions $(a,b,c,y)$ counted in this case is $O(H^{\eps + 3/2})$. The total contribution to $N_{G,n}$ from this non-linear case is therefore
\begin{equation} \label{DominantContribution}
O(H^{n-3} H^{\eps + 3d^{-1/3}}  H^{\eps + 3/2})
\ll H^{2\eps + n -3/2 + 3d^{-1/3}}.
\end{equation}

\bigskip

Next, suppose $\cF$ is linear. Then 
\[
\cF(a,b,c) = \alp a + \bet b + \gam c + \del = 0,
\]
for some $\alp, \bet, \gam, \del \in \bR$ with $(\alp, \bet, \gam) \ne (0,0,0)$. Supposing for the time being that $\cF$ is not a multiple of a rational polynomial, we can write
\[
\cF = \sum_{i \le r} \lam_i F_i,
\]
where $r \in \{ 2, 3, 4 \}$, the $F_i$ are linear polynomials with rational coefficients, and $\lam_1,\ldots,\lam_r$ are linearly independent over $\bQ$. Following the argument presented in the proof of \cite[Corollary 1]{HB2002}, some $F_i$ is not a multiple of $\cF$, and any rational root of $\cF$ must also be a root of this $F_i$. It then follows from linear algebra that there are $O(H)$ possibilities for $(a,b,c)$. The contribution to $N_{G,n}$ from this situation is $O(H^{n-2+\eps+3d^{-1/3}})$.

Thus, we may assume in the sequel that $\alp, \bet, \gam, \del \in \bZ$. Moreover, if $\cF(a,b,c) = 0$ has an integer solution then $(\alp,\bet,\gam) \mid \del$, so we may divide through by $(\alp,\bet,\gam)$ and assume that $(\alp,\bet,\gam) = 1$.

\begin{lemma} If $\max\{ |\alp|, |\bet|, |\gam| \} > H$ then
\[
\# \{ (a,b,c) \in (\bZ \cap [-H,H]^3): \alp a + \bet b + \gam c + \del = 0 \} \ll H.
\]
\end{lemma}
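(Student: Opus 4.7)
The plan is to exploit the coprimality $\gcd(\alp,\bet,\gam) = 1$ (established in the preceding paragraph) together with the hypothesis that some coefficient exceeds $H$ in absolute value, reducing the problem to two independent one-variable congruences. By symmetry we may assume $|\gam| = \max\{|\alp|,|\bet|,|\gam|\} > H$. Given such a triple, the value of $c$ is uniquely determined by $a$ and $b$ via $c = -(\alp a + \bet b + \del)/\gam$, so it suffices to count pairs $(a,b) \in (\bZ \cap [-H,H])^2$ satisfying the divisibility condition $\gam \mid \alp a + \bet b + \del$.

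Set $g = \gcd(\alp,\gam)$. Fixing $b$, the congruence $\alp a \equiv -\bet b - \del \pmod{\gam}$ admits a solution in $a$ only when $g \mid \bet b + \del$; in that case, dividing through by $g$ yields a congruence modulo $|\gam|/g$ with leading coefficient coprime to the modulus, so $a$ is determined modulo $|\gam|/g$, contributing $O(1 + Hg/|\gam|)$ admissible values $a \in [-H,H]$.

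To count the admissible $b$, I would use that $g \mid \alp$ and $g \mid \gam$ together with $\gcd(\alp,\bet,\gam) = 1$ to deduce $\gcd(\bet,g) = 1$. Hence the condition $\bet b \equiv -\del \pmod{g}$ fixes $b$ uniquely modulo $g$, giving $O(1 + H/g)$ admissible $b \in [-H,H]$. Multiplying the two local counts yields
\[
O\!\left(\left(1 + \frac{H}{g}\right)\!\left(1 + \frac{Hg}{|\gam|}\right)\right)
= O\!\left(1 + \frac{H}{g} + \frac{Hg}{|\gam|} + \frac{H^2}{|\gam|}\right)
= O(H),
\]
using $1 \le g \le |\gam|$ and $|\gam| > H$ to bound each of the four terms by $O(H)$.

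The only place where the full coprimality $\gcd(\alp,\bet,\gam) = 1$ is essential is in guaranteeing $\gcd(\bet,g) = 1$; without this the $g$ in the second factor would fail to cancel against the $1/g$ in the first, producing a weaker estimate with a surplus factor of $g$. The remainder is elementary bookkeeping of arithmetic progressions intersected with $[-H,H]$, so no significant obstacle is anticipated.
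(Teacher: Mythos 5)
Your proof is correct but takes a genuinely different route from the paper's. You argue directly via elementary number theory: fixing $b$, the congruence $\alp a \equiv -(\bet b + \del) \pmod \gam$ constrains $a$ to an arithmetic progression of modulus $|\gam|/g$ where $g = \gcd(\alp,\gam)$, and separately $b$ is constrained to a progression of modulus $g$ (using $\gcd(\bet,g)=1$, which follows from $\gcd(\alp,\bet,\gam)=1$); multiplying the two counts and using $1 \le g \le |\gam|$ and $|\gam| > H$ gives $O(H)$. The paper instead shifts by a known integer solution $(a_0,b_0,c_0)$ to reduce to the homogeneous lattice $\Lam = \{(x,y) \in \bZ^2 : \alp x + \bet y \equiv 0 \pmod \gam\}$, which has determinant $|\gam|$ (here $\gcd(\alp,\bet,\gam)=1$ is again needed), and then applies Schmidt's lattice point count from \cite{Sch1995} together with the trivial bound $\lam_1 \ge 1$. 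Your approach is more self-contained, avoiding the citations to \cite{CT2021} and \cite{Sch1995}, and also dispenses with the paper's shift step since you bound the count of $(a,b)$ directly without normalizing to a homogeneous equation; the paper's approach is somewhat more uniform and plugs into existing geometry-of-numbers machinery. Both are about the same length and yield the same bound.
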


\begin{proof} By symmetry, we may assume that $\gam > H$. We may also assume that there exists $(a_0,b_0,c_0) \in (\bZ \cap [-H,H])^3$ for which 
\[
\alp a_0 + \bet b_0 + \gam c_0 + \del = 0.
\]
Now
\[
\alp x + \bet y + \gam z = 0,
\]
where $x = a-a_0$, $y = b-b_0$ and $z = c - c_0$ lie in $[-2H,2H]$, and in particular
\[
\alp x + \bet y \equiv 0 \mmod \gam.
\]
This condition defines a full-rank lattice $\Lam$ of determinant $\gam$, by \cite[Lemma 2.14]{CT2021}. Its first successive minimum satisfies $\lam_1 \ge 1$, since $\Lam \subset \bZ^2$. Thus, by \cite[Lemma 2]{Sch1995}, we have
\[
\# \{ (x,y) \in \Lam \cap [-2H,2H]^2 \}
\ll \frac{H^2}\gam + H \ll H.
\] 
\end{proof}

The contribution to $N_{G,n}$ from the scenario $\max\{|\alp|,|\bet|, |\gam|\} > H$ is therefore 
\[
O(H^{n-2+\eps+3d^{-1/3}}),
\]
which is negligible for the purposes of proving \eqref{FinalBound}. We may therefore suppose that $|\alp|,|\bet|,|\gam| \le H$. Now there are no solutions $(a,b,c) \in [-H,H]^3$ to
\[
\alp a + \bet b + \gam c + \del = 0
\]
unless $|\del| \le 3H^2$.
Henceforth, we assume that 
\begin{equation} \label{CoefficientBound}
|\alp|, |\bet|, |\gam| \le H,
\qquad |\del| \le 3H^2.
\end{equation}

\bigskip

Polynomials $G_0(X), G_1(X), G_2(X) \in \bC[X]$ are \emph{strongly totally composite} if there exist a rational function $R = P/Q$, where $P(X) \in \bC[X]$ and $Q(X) \in \bC[X] \setminus \{ 0 \}$ are coprime, and $P_0(X), P_1(X), P_2(X) \in \bC[X]$, such that
\[
\deg(R) \ge 2, \qquad \deg(P) > \deg(Q), \qquad 
L := \max \{ \deg(P_0), \deg(P_1), \deg(P_2) \} \ge 2,
\]
and
\[
G_i(X)= Q(X)^L P_i(R(X)) \qquad (i=0,1,2).
\]

The analysis naturally divides into three cases. The overall structure of the argument will be the same in the three cases, but some details will differ.

\subsubsection{Case 1: $\gam \ne 0$} 

Then
\begin{align*}
f(X) &= X^n + a_1 X^{n-1} + \cdots + a_{n-3} X^3
+ aX^2 + bX - \gam^{-1} (\alp a + \bet b + \del) \\
&= G_0(X) + a G_1(X) + b G_2(X),
\end{align*}
where
\[
G_0(X) = X^n + a_1 X^{n-1} + \cdots + a_{n-3} X^3 - \del/\gam,
\quad
G_1(X) = X^2 - \alp/\gam, 
\quad
G_2(X) = X - \bet/\gam.
\]
We study $\Gal(f, \bC(a,b))$ using the method of Uchida \cite{Uch1970}, Smith \cite{Smi1977} and Cohen \cite{Coh1972, Coh1980}. As we only wish to count irreducible polynomials, we may assume that there exists an irreducible specialisation over the rationals. Whence $f$ is irreducible over $\bQ(a,b)$, and is therefore also separable. Note that $G_0,G_1,G_2$ are coprime, and that they are linearly independent over $\bC$. 

Recall the following elementary fact.

\begin{lemma}
Let $n \ge 3$ be an integer, and let $G$ be a group. Then the action of $G$ on $\{1,2,\ldots,n\}$ is doubly-transitive if and only if the stabiliser subgroup of any point acts transitively on the remaining points. 
\end{lemma}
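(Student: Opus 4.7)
The plan is to verify both implications directly from the definitions, with no machinery beyond elementary group actions.

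For the forward implication, suppose $G$ acts doubly-transitively on $\{1,2,\ldots,n\}$, and fix $i$. Given any $j, k \in \{1,2,\ldots,n\} \setminus \{i\}$, double transitivity applied to the ordered pairs $(i,j)$ and $(i,k)$ yields $\sigma \in G$ with $\sigma(i) = i$ and $\sigma(j) = k$. Then $\sigma \in \mathrm{Stab}(i)$ and $\sigma$ sends $j$ to $k$, so the stabiliser of $i$ acts transitively on the complement.

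For the converse, I first extract ordinary transitivity of $G$ on $\{1,2,\ldots,n\}$ from the hypothesis, using $n \ge 3$. Given distinct $i, j \in \{1,2,\ldots,n\}$, the assumption $n \ge 3$ provides a point $k$ distinct from both; the stabiliser of $k$ acts transitively on $\{1,2,\ldots,n\} \setminus \{k\}$ by hypothesis, so some $\tau \in \mathrm{Stab}(k)$ satisfies $\tau(i) = j$. Hence $G$ itself is transitive.

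With transitivity established, I promote it to double transitivity. Given ordered pairs $(i,j)$ and $(k,l)$ with $i \ne j$ and $k \ne l$, transitivity produces $\tau \in G$ with $\tau(i) = k$; set $m = \tau(j)$, and note that $m \ne k$. By hypothesis, the stabiliser of $k$ acts transitively on the complement of $\{k\}$, so there exists $\rho \in \mathrm{Stab}(k)$ with $\rho(m) = l$. Then $\rho\tau(i) = \rho(k) = k$ and $\rho\tau(j) = \rho(m) = l$, giving the required element of $G$.

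This is a routine group-theoretic fact, and there is no real obstacle. The only point that deserves care is that the hypothesis ``stabiliser of every point is transitive on the remaining points'' does not formally include transitivity of $G$ itself, so in the converse direction one must invoke the assumption $n \ge 3$ to produce a third point and thereby deduce transitivity of the ambient action before upgrading to double transitivity.
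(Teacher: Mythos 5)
Your proof is correct, and the forward direction matches the paper's (which simply says it is ``clear''). The backward direction is the same underlying idea as the paper's — compose permutations each fixing one of the points involved, using $n \ge 3$ to furnish a third point — but you organize it differently: you first isolate transitivity of $G$ as an intermediate step (stabiliser of a third point $k$ moves $i$ to $j$) and then upgrade to double transitivity via a two-fold composition (transitive element, then stabiliser element), whereas the paper writes a single three-fold composition
\[
(i_1,i_2) \;\longmapsto\; (i_1,k) \;\longmapsto\; (j_1,k) \;\longmapsto\; (j_1,j_2)
\]
without ever naming ordinary transitivity as a separate claim. Your decomposition is slightly more systematic and makes the role of $n \ge 3$ (to secure transitivity from the stabiliser hypothesis) explicit, which is exactly the subtle point; the paper's version is more compressed but logically equivalent. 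Nothing is missing and no step fails.
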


\begin{proof} The forward direction is clear. For the backward direction, let $i_1 \ne i_2$ and $j_1 \ne j_2$ be elements of $\{1,2,\ldots,n\}$, and let $k \in \{1,2,\ldots,n\} \setminus \{i_1,j_1\}$. Then the composition
\begin{align*}
i_1 &\mapsto i_1 \mapsto j_1 \mapsto j_1 \\
i_2 &\mapsto k \mapsto k \mapsto j_2
\end{align*}
maps $(i_1,i_2)$ to $(j_1,j_2)$.
\end{proof}

In light of this, the Galois group of $f$ is doubly-transitive if and only if $f(X)/(X-x)$ is irreducible over $\bC(a,b,x)$ for any root $x \in \overline{\bC(a,b)}$. Note that any root of $f$ must be transcendental over $\bC$.

\begin{lemma} \label{DoublyTransitive} The permutation group $\Gal(f, \bC(a,b))$ is doubly-transitive.
\end{lemma}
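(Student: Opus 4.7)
The plan is to verify the criterion stated in the paragraph immediately preceding the lemma: it suffices to prove that $f(Y)/(Y-x)$ is irreducible over $\bC(a,b,x)$ for some root $x$ of $f$. I would approach this via a direct comparison of field degrees, exploiting that both $a$ and $b$ appear linearly in $f$.

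The first step is to observe that $f$ is irreducible over $\bC(a,b)$, so that $[\bC(a,b,x):\bC(a,b)] = n$. Indeed, since $G_1(X) = X^2 - \alp/\gam$ is a nonzero element of $\bC[X]$, the hypersurface $\{f = 0\} \subset \bA^3_{X,a,b}$ is birational to $\bA^2_{X,b}$ via $a = -(G_0(X) + bG_2(X))/G_1(X)$ and hence is irreducible; so $f$ is irreducible in $\bC[X,a,b]$, and then in $\bC(a,b)[X]$ by Gauss's lemma.

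Next, I would take a second root $y$ of $f$ with $y \ne x$ and consider the variety $V \subset \bA^4_{x,y,a,b}$ cut out by $f(x) = f(y) = 0$ together with $x \ne y$. Viewing $f(x) = 0$ and $f(y) = 0$ as a pair of equations linear in $(a,b)$, Cramer's rule gives
\[
a = -\frac{G_0(x) + bG_2(x)}{G_1(x)}, \qquad
b = \frac{G_0(x)G_1(y) - G_0(y)G_1(x)}{G_1(x)G_2(y) - G_1(y)G_2(x)}.
\]
The denominator of the expression for $b$ factors as $(x-y)\bigl[xy - (\bet/\gam)(x+y) + \alp/\gam\bigr]$, a nonzero polynomial on $\bA^2_{x,y}$, and $G_1(x)$ is likewise nonzero as a rational function of $x$. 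Therefore the map $(x,y) \mapsto (x,y,a(x,y),b(x,y))$ defines a birational equivalence between $\bA^2_{x,y}$ and $V$, so $V$ is geometrically irreducible with function field $\bC(V) = \bC(x,y)$.

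Finally, the projection $V \to \bA^2_{a,b}$ has generic fibre equal to the set of ordered pairs of distinct roots of the specialised $f$, of cardinality $n(n-1)$ since $f$ is separable of degree $n$ in characteristic zero. Hence $[\bC(x,y):\bC(a,b)] = n(n-1)$, and combined with the first step, $[\bC(x,y):\bC(a,b,x)] = n-1$. Since $y$ satisfies the degree-$(n-1)$ polynomial $f(Y)/(Y-x) \in \bC(a,b,x)[Y]$, this must be its minimal polynomial and is therefore irreducible, which gives the desired criterion. The only real obstacle is verifying non-vanishing of the two denominators in the Cramer formulas, which is immediate from the explicit shapes of $G_1$ and $G_2$.
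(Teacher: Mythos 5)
The degree-count strategy is a reasonable alternative framing, but there is a genuine gap at the decisive step. Your Cramer's-rule argument establishes an isomorphism between the open set $U = \{(x,y) : \det(x,y) \ne 0\} \subset \bA^2_{x,y}$ and the open subset $V^{\circ} = V \cap \{\det \ne 0\}$, where $\det(x,y) := G_1(x)G_2(y) - G_1(y)G_2(x)$. This shows $V^{\circ}$ is irreducible with function field $\bC(x,y)$, but it does \emph{not} show that $V^{\circ}$ is dense in the set $\{f(x)=f(y)=0,\, x \ne y\}$, i.e.\ that there is no additional two-dimensional component lying inside $\{\det = 0\}$. For the same reason, the assertion that the generic fibre of $V \to \bA^2_{a,b}$ has cardinality $n(n-1)$ is unjustified: what one actually obtains is that the degree equals the number of ordered pairs of distinct roots of $f(\cdot;a_0,b_0)$ \emph{that are not annihilated by $\det$}, and it must be separately verified that, for generic $(a_0,b_0)$, no pair of distinct roots satisfies $x_0 y_0 - (\bet/\gam)(x_0+y_0) + \alp/\gam = 0$. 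Ruling this out is exactly the nontrivial content of the lemma; it is morally equivalent to the paper's analysis, where $\xi = (\bet x - \alp)/(\gam x - \bet)$ is precisely the conjugate of $x$ under the involution defined by $\det = 0$, and a functional equation on $G_0$ is derived and shown to contradict irreducibility of $f$.

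To see that the Cramer step alone cannot be enough, consider the model case $G_0 = X^4$, $G_1 = X^2$, $G_2 = 1$ (so $f = X^4 + aX^2 + b$): there $\det(x,y) = x^2 - y^2$, the locus $\{y = -x,\ f(x)=0\}$ is a genuine two-dimensional component of $\{f(x)=f(y)=0, x\ne y\}$ sitting inside $\{\det=0\}$, the degree of your $V^\circ$ over $\bA^2_{a,b}$ drops from $12$ to $8$, and indeed the Galois group is $D_4$, not doubly-transitive. The specific shape of $G_1, G_2$ in the paper's Case~1 (together with the standing irreducibility hypothesis on $f$) rules this pathology out, but that is precisely what still needs to be proved and what the paper's argument supplies. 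Until you carry out that verification — for example by showing the locus $\{f(x)=f(y)=0,\ x\ne y,\ \det(x,y)=0\}$ has dimension at most one — the proof is incomplete.
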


\noindent This is analogous to \cite[Lemma 4]{Coh1980}.

\begin{proof}
Assume for a contradiction that $\Gal(f, \bC(a,b))$ is not doubly-transitive. Then there exists a root $x \in \overline{\bC(a,b)}$ of $f$ such that $f(X)/(X-x)$ is reducible over $\bC(a,b,x)$. By Gauss's lemma, the polynomial $f(X)/(X-x)$ is reducible over $\bC[a,b,x]$. As $f(x) = 0$, we have 
\[
bG_2(x) = -G_0(x) - aG_1(x),
\]
so
\begin{align*}
G_2(x) f(X) &= G_2(x) (G_0(X) + a G_1(X) + b G_2(X)) \\
&= G_2(x)G_0(X) - G_0(x) G_2(X) + a(G_2(x) G_1(X) - G_1(x) G_2(X)).
\end{align*}
This is linear in $a$ and separable in $X$, so for $f(X)/(X-x)$ to be reducible there must exist $\xi \in \overline{\bC(x)} \setminus \{ x \}$ for which
\[
G_2(x) G_i (\xi) = G_2(\xi) G_i(x) \qquad (i=0,1).
\]

Note that $x \ne \bet/\gam$, being transcendental over $\bC$. The equation with $i=1$ is quadratic in $\xi$, with one of the roots being $x$. Therefore $\xi$ equals the other root:
\[
\xi = 
\frac{x^2 - \alp/\gam}{x - \bet/\gam} \: - x = 
\frac{\bet x - \alp} { \gam x - \bet}.
\]
Substituting this into the equation with $i=0$ yields
\[
\left(x - \frac \bet \gam \right) G_0 \left( \frac{\bet x - \alp} { \gam x - \bet} \right) = \left( \frac{\bet x - \alp} { \gam x - \bet} - \frac \bet \gam \right) G_0(x) = \frac{\bet^2 - \alp \gam}{\gam(\gam x - \bet)} G_0(x),
\]
or equivalently
\[
(\gam x - \bet)^n G_0 \left( \frac{\bet x - \alp} { \gam x - \bet} \right) - (\bet^2 - \alp \gam) (\gam x - \bet)^{n-2} G_0(x) = 0.
\]
As $x$ is transcendental over $\bC$, it cannot be the root of a non-zero polynomial over $\bC$, and whence
\[
(\gam X - \bet)^n G_0 \left( \frac{\bet X - \alp} { \gam X - \bet} \right) - (\bet^2 - \alp \gam) (\gam X - \bet)^{n-2} G_0(X) = 0 \in \bC[X],
\]
or equivalently
\[
G_0 \left( \frac{\bet X - \alp} { \gam X - \bet} \right) = \frac{(\bet^2 - \alp \gam) G_0(X)}{(\gam X - \bet)^2} \in \bC(X).
\]

\iffalse
By degree considerations, we must have $\bet^2 = \alp \gam$. If $\bet \ne 0$ then $G_0(\alp/\bet) = 0$,
\begin{align*}
G_1(\alp/\bet) &=
\frac{\alp^2}{\bet^2} - \frac{\alp}{\gam} = \frac{\alp}{\bet^2\gam} (\alp \gam - \bet^2) =0,
\\
G_2(\alp/\bet) &=
\frac{\alp}{\bet} - \frac{\bet}{\gam} = 0,
\end{align*}
so $f(\alp/\bet) = 0$,
\fi
Taking $X \to \infty$ yields $\bet^2 - \alp \gam = 0$ and $G_0(\bet/\gam) = 0$. As 
\[
G_1(\bet/\gam) = \bet^2/\gam^2-\alp/\gam = \gam^{-2} (\bet^2 - \alp \gam) = 0,\qquad G_2(\bet/\gam) = 0,
\]
we now have $f(\bet/\gam) = 0$, contradicting the irreducibility of $f$ over $\bQ(a,b)$.
\iffalse Therefore $\bet = 0$, so $\alp =0$. Now
\[
G_0(0) = G_1(0) = G_2(0) = 0,
\]
so $f(0) =0$, again contradicting the irreducibility of $f$.\fi
\end{proof}

We will show that the permutation group $\mathrm{Gal}(f,\bC(a,b))$ contains a transposition, but first we require a preparatory result.

\begin{lemma} \label{NotTotallyComposite} The polynomials $G_0, G_1, G_2$ are not strongly totally composite.
\end{lemma}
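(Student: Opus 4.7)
The plan is to kill the lemma by a single degree identity applied to $G_2$, whose degree (namely $1$) is incompatibly small given the constraint $L \ge 2$ in the definition of strongly totally composite. I would begin by assuming for contradiction the existence of data $(R = P/Q,\, P_0, P_1, P_2,\, L)$ as prescribed, and introduce the shorthand $p = \deg(P)$, $q = \deg(Q)$, $d_i = \deg(P_i)$, so that $p \ge 2$, $p > q \ge 0$, and $L = \max(d_0, d_1, d_2) \ge 2$. Since $G_2 = X - \bet/\gam$ is nonzero, $P_2$ is nonzero and $d_2 \ge 0$.

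The main (routine) ingredient is the degree formula
\[
\deg\bigl(Q(X)^L P_i(R(X))\bigr) = Lq + d_i(p - q),
\]
obtained by expanding $Q^L P_i(R) = \sum_{j \le d_i} c_j P^j Q^{L-j}$ and observing that, because $p > q$ and $P,Q$ are coprime, the summands have pairwise distinct degrees $Lq + j(p - q)$, with the maximum attained at $j = d_i$. Applying this with $i = 2$ would give $Lq + d_2(p-q) = 1$, whereupon a trivial case-split finishes the argument: if $q \ge 1$ then $Lq \ge 2 > 1$; if $q = 0$ then the identity reduces to $d_2 p = 1$, contradicting $p \ge 2$.

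I do not anticipate any substantive difficulty here. The non-compositeness is forced purely by the mismatch between the lower bound $L \ge 2$ and the fact that $\deg(G_2) = 1$, and no structural properties of $G_0$ or $G_1$ are used. The situation will presumably be more interesting in the analogous statements pertaining to Cases 2 and 3 below, where $G_2$ is respectively constant (or of larger degree), and one will need to exploit the specific shape of $G_0 = X^n + a_1 X^{n-1} + \cdots + a_{n-3}X^3 - \alp$ to rule out a compositional structure.
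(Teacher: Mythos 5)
Your proof is correct and takes essentially the same route as the paper: both exploit only the $i=2$ compositional identity for $G_2(X) = X - \bet/\gam$, observe via the dominance of the $P^{d_2}Q^{L-d_2}$ term that $\deg(Q^L P_2(R)) = L\deg Q + d_2(\deg P - \deg Q)$, and derive a contradiction from $L \ge 2$ and $\deg P \ge 2$ once this is set equal to $1$. The paper phrases the case split as forcing $\deg P_2 = 0$ and $\deg Q = 0$ in turn, while you split on $\deg Q$, but the content is identical.
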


\begin{proof} Suppose for a contradiction that
\[
G_2(X) = Q(X)^L P_2(R(X)), \qquad R=P/Q,
\]
where $P(X), Q(X) \in \bC[X] \setminus \{ 0 \}$ are coprime, $P_2(X) \in \bC[X]$, and 
\[
\deg(P) > \max \{ \deg(Q), 1 \}, \qquad L \ge s := \deg(P_2), \qquad L \ge 2.
\]
Writing
\[
P_2(X) = b_s X^s + \cdots + b_0, \qquad b_0, \ldots, b_s \in \bC, \qquad b_s \ne 0,
\]
we have
\[
X - \bet/\gam = Q(X)^{L-s} (b_s P(X)^s + \cdots + b_0 Q(X)^s).
\]
The right hand side has degree $(L-s) \deg(Q) + s \cdot \deg(P) \ge (L-s) \deg(Q) + 2s$. Now $s = 0$ and $\deg(Q) = 0$, and we reach a contradiction, completing the proof.
\end{proof}

\begin{lemma} \label{transposition} The permutation group $\mathrm{Gal}(f,\bC(a,b))$ contains a transposition.
\end{lemma}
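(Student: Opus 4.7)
The strategy is to apply Lemma \ref{NormalityCriterion} with $\cG(X) = G_0(X) + a_0 G_1(X)$ and $\cH(X) = G_2(X) = X - \bet/\gam$ for a suitably chosen $a_0 \in \bC$, thereby forcing $f_b(X) := \cG(X) + b\cH(X)$ to be normal for every $b \in \bC$. For generic $a_0$, the polynomials $\cG$ and $\cH$ are coprime (the only potential common root $X = \bet/\gam$ is excluded by the irreducibility of $f$, arguing as at the end of the proof of Lemma \ref{DoublyTransitive}), and the rational function $r := \cG/\cH$ has degree $n \ge 2$, so $r$ admits a critical point $x_0 \in \bC$ with $\cH(x_0) \ne 0$ and $r'(x_0) = 0$. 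Setting $b_0 = -r(x_0)$, the polynomial $f_{b_0}$ acquires a double root at $x_0$, and by normality must have factor type exactly $(1,\ldots,1,2)$. Standard inertia theory for the cover defined by $f(X; a_0, b) = 0$ over the $b$-line then produces a transposition in $\Gal(f(X; a_0, b), \bC(b))$, generating the inertia at $b = b_0$. Choosing $a_0$ additionally so that $f(X; a_0, b)$ is irreducible over $\bC(b)$ --- which holds for cofinitely many $a_0$ by Theorem \ref{HIT} --- the specialization principle embeds this transposition into $\Gal(f, \bC(a,b))$.

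The verification of condition \eqref{FirstSystem} of Lemma \ref{NormalityCriterion}, namely that no $x \in \bC$ satisfies $r'(x) = r''(x) = 0$ with $\cH(x) \ne 0$, amounts to saying that $f_b$ never has a triple root. This is an algebraic constraint on $a_0$; we confirm that it fails only on a proper Zariski-closed subset of $\bC$ by computing the resultant in $X$ of $r'$ and $r''$ at a concrete value such as $a_0 = 0$, exploiting the genericity hypothesis \eqref{weird} on $a_1, a_2, a_3$.

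The main obstacle is condition \eqref{SecondSystem}. Assume for contradiction that, for infinitely many $a_0 \in \bC$, there exist $x \ne y$ with $r(x) = r(y)$, $r'(x) = r'(y) = 0$, and $\cH(x)\cH(y) \ne 0$. Writing $r = R_0 + a_0 R_1$ with $R_i = G_i/G_2$, each of the three equations is affine in $a_0$; eliminating $a_0$ pairwise yields polynomial relations in $(x,y)$ alone, and the persistence of solutions as $a_0$ varies forces these relations to share a non-constant common divisor $D(X,Y)$, via Lemma \ref{Bezout}. Lemma \ref{TechnicalLemma} applied to $(R_0, R_1)$ then shows that $D(X,Y)$ divides $B_R^*(X,Y)$ for $R = R_0/R_1 = G_0/G_1$. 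Invoking Lemma \ref{CohenFieldTheory}, we obtain a rational function $R^*(X) = P(X)/Q(X) \in \bC(X)$ of degree at least $2$, with $\deg P > \deg Q$, such that both $R_0$ and $R_1$ lie in $\bC(R^*)$. Clearing denominators by the common power $Q(X)^L$, there exist polynomials $P_0, P_1, P_2 \in \bC[X]$ and an integer $L \ge \max_i \deg P_i \ge 2$ with $G_i(X) = Q(X)^L P_i(R^*(X))$ for $i = 0, 1, 2$. Thus $(G_0, G_1, G_2)$ is strongly totally composite, contradicting Lemma \ref{NotTotallyComposite}. Hence \eqref{SecondSystem} holds for cofinitely many $a_0 \in \bC$, and combining with the two previous paragraphs we secure the desired $a_0$.
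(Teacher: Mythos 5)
Your overall architecture mirrors the paper's: set $\cG = G_0 + aG_1$, $\cH = G_2$, show that for generic $a$ some $b$ gives a multiple root, and that for generic $a$ the polynomial is normal for \emph{all} $b$ via Lemma \ref{NormalityCriterion}, then lift the resulting factor type $(1,\ldots,1,2)$ to a transposition (the paper cites \cite[Corollary~6]{Coh1980} where you invoke inertia theory --- equivalent). However, there are genuine gaps, one of them serious.

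The serious gap is in your treatment of \eqref{SecondSystem}. Eliminating $a_0$ pairwise from $r(x)=r(y)$, $r'(x)=0$, $r'(y)=0$ yields the three minor relations
\[
R_0'(x)R_1'(y)=R_0'(y)R_1'(x),\quad
R_0'(x)(R_1(x)-R_1(y))=R_1'(x)(R_0(x)-R_0(y)),
\]
and the symmetric one with $x\leftrightarrow y$. None of these has the form $R_0(x)R_1(y)=R_0(y)R_1(x)$, so Lemma \ref{TechnicalLemma} applied to the pair $(R_0,R_1)$ is \emph{not} available, and the claim that $D(X,Y)\mid B_{G_0/G_1}^*(X,Y)$ is unjustified. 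Even granting it, Lemma \ref{CohenFieldTheory} would only put $G_0/G_1$ in $\bC(R^*)$; it does not put $R_0=G_0/G_2$ and $R_1=G_1/G_2$ \emph{separately} in $\bC(R^*)$, which is what you need to manufacture the strongly-totally-composite structure. The paper instead applies Lemma \ref{TechnicalLemma} to the \emph{derivative} pair, showing $D\mid B_{R_1}^*$ and $D\mid B_{R_2}^*$ for $R_1 = r_1'/r_0'$ and $R_2=(r_0 r_1' - r_0' r_1)/r_0'$, invokes Lemma \ref{CohenFieldTheory} for the former, and then performs a differentiation step ($\tet(R)r_0 - r_1 = \phi(R)$, differentiate, cancel, deduce $r_0 = \phi'(R)/\tet'(R)$) to recover $r_0, r_1 \in \bC(R)$. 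That manipulation, plus the verification that $\tet$ is non-constant and the side analysis of the subcase $r_0'(x)r_0'(y)=0$ (and of the rank-$0$ degeneracy of the matrix $M$, where a single $(x,y)$ would serve all $a_0$), is the crux of the argument and is entirely absent from your sketch.

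A secondary issue: your verification of \eqref{FirstSystem} appeals to the hypothesis \eqref{weird}, but \eqref{weird} plays no role in Case~1 (it is used only in Case~2 for Lemma \ref{DoublyTransitive2}). The paper's argument for \eqref{FirstSystem} in Case~1 is unconditional: one shows directly that $(x-\bet/\gam)(G_0''(x)+2a)=0$, eliminates $a$, and obtains a degree-$n$ polynomial constraint on $x$, hence at most $n$ bad values of $a$. This is cleaner and requires no resultant computation at $a_0=0$ and no appeal to \eqref{weird}.
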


This is analogous to \cite[Corollary 6 and Lemma 7]{Coh1980}. 

\begin{proof}
By \cite[Corollary 6]{Coh1980}, it suffices to establish the existence of $a,b \in \bC$ for which $f$ has factor type $(1,\ldots,1,2)$. We proceed in two steps. The first is to show that, for all but finitely many $a \in \bC$, there exists $b \in \bC$ such that $\Del = 0$. To achieve this, we need to demonstrate the existence of $x,b \in \bC$ such that 
\[
G_0(x) + aG_1(x) + bG_2(x) = 0, \qquad
G_0'(x) + aG_1'(x) + b = 0.
\]
We choose $b = - (G_0'(x) + aG_1'(x))$, and now we need to solve
\[
G_0(x) + aG_1(x) - (G_0'(x) + aG_1'(x)) G_2(x) = 0.
\]
As $G_1(X) - G_1'(X) G_2(X) = X^2 - \alp/\gam - 2X(X-\bet/\gam)$ is non-constant, there can be at most one value of $a$ for which 
\[
G_0(X) + aG_1(X) - (G_0'(X) + aG_1'(X)) G_2(X)
\]
is constant. For any other value of $a \in \bC$, there must exist a solution $x \in \bC$. This completes the first step.

\bigskip

The remaining second step is to show that, for all but finitely many $a \in \bC$, for any $b \in \bC$ the polynomial $f$ is normal. For this we will apply Lemma \ref{NormalityCriterion} with $\cG = G_0 + aG_1$ and $\cH = G_2$. As
\[
\cG\left(
\frac{\bet}{\gam} \right) = G_0 \left( \frac{\bet}{\gam} \right) + a \left( \frac{\bet^2}{\gam^2} \: - \: \frac{\alp}{\gam} \right)
\]
is a non-zero polynomial of degree at most 1 in the variable $a$, there is at most one value of $a \in \bC$ such that $\cG$ and $\cH$ are not coprime over $\bC$.

Suppose $a,x \in \bC$ and that we have \eqref{FirstSystem}, where $r = \cG/\cH$. Then
\begin{align*}
0 &= \cH(x)^2 r'(x) = \cH(x) \cG'(x) - \cG(x) \cH'(x) \\
&= (x-\bet/\gam) (G'_0(x) + 2xa) - (G_0(x) + a(x^2 - \alp/\gam))
\end{align*}
and
\begin{align*}
0 &= \cH(x)^3 r''(x) = 
\cH(x)(\cH(x)\cG''(x) - \cG(x)\cH''(x))
- 2\cH'(x)(\cH(x) \cG'(x) - \cG(x) \cH'(x)) \\
&= (x - \bet/\gam)^2 (G_0''(x) +2a) 
- 2((x - \bet/\gam) (G_0'(x) +2xa) - (G_0(x) + a (x^2 - \alp/\gam))),
\end{align*}
so
\[
(x - \bet/\gam) (G''_0(x)+2a) = 0.
\]
By \eqref{FirstSystem}, we have $x \ne \bet/\gam$. Hence $2a = -G''_0(x)$, and so
\[
2(x-\bet/\gam)(G_0'(x) - G''_0(x)x) - 2G_0(x) + G_0''(x)(x^2 - \alp/\gam) = 0.
\]
The polynomial has degree exactly $n$, so there are at most $n$ solutions $(a,x) \in \bC^2$ to \eqref{FirstSystem}. 

\bigskip

Now suppose instead that we have \eqref{SecondSystem} for some $a,x,y \in \bC$. With 
\[
r_0 = G_0/\cH, \qquad r_1 = G_1/\cH,
\]
we have
\[
M \begin{pmatrix} 1 \\ a \end{pmatrix} = 
\begin{pmatrix} 0 \\ 0 \\ 0 \end{pmatrix},
\]
where
\[
M = \begin{pmatrix}
r_0(x) - r_0(y) & r_1(x) - r_1(y) \\
r_0'(x) & r_1'(x)\\
r_0'(y) & r_1'(y)
\end{pmatrix}.
\]

Assume for a contradiction that $\rank(M) = 0$. Then
\[
2x = \frac{x^2 - \alp/\gam}{x-\bet/\gam} = \frac{y^2 - \alp/\gam}{y-\bet/\gam} = 2y,
\]
contradicting \eqref{SecondSystem}. The upshot is that $\rank(M) \ge 1$.

There are no solutions $a \in \bC$ if $\rank(M) = 2$, so we may assume in the sequel that $\rank(M) = 1$, in which case there is at most one solution $a \in \bC$. Next, we use an argument from \cite{Coh1972}. Considering minors yields
\[
r_0'(x) r_1'(y) = r_1'(x) r_0'(y)
\]
and
\[
r_0'(x) (r_1(x) - r_1(y)) = r_1'(x) (r_0(x) - r_0(y)),
\]
as well as 
\[
r_0'(y)(r_1(x)-r_1(y)) = r_1'(y) (r_0(x) - r_0(y)).
\]

\bigskip

\textbf{Solutions with $r_0'(x)r_0'(y) = 0$.} By symmetry, it suffices to consider solutions with $r_0'(y) = 0$. As 
\[
\cH(y)^2 r_0'(y) = \cH(y)G_0'(y) - G_0(y)
\]
has degree $n$ in $y$, there are only finitely many possibilities for $y$. Then
\begin{align*}
&\cH(x)^3 \cH(y) (r_0'(x) (r_1(x) - r_1(y)) - r_1'(x) (r_0(x) - r_0(y)))\\
&=  
(\cH(x)G_0'(x)-G_0(x))(\cH(y)G_1(x) - G_1(y) \cH(x)) \\ &\qquad
-(\cH(x)G_1'(x)-G_1(x))(\cH(y)G_0(x) - G_0(y) \cH(x))
\end{align*}
is a degree $n+2$ polynomial in $x$, so it has finitely many zeros. We conclude that there are finitely many solutions $(a,x,y)$ of this type.

\bigskip

\textbf{Solutions with $r_0'(x)r_0'(y) \ne 0$.}
We have
\[
r_0'(x)r_1(x) - r_0(x) r_1'(x) = r_0'(x) r_1(y) - r_0(y) r_1'(x),
\]
and so
\begin{align*}
r_0'(y)(r_0'(x)r_1(x) - r_0(x) r_1'(x))
&= r_0'(y) (r_0'(x) r_1(y) - r_0(y) r_1'(x)) \\
&= r_0'(x)(r_0'(y) r_1(y) - r_0(y) r_1'(y)).
\end{align*}
Thus, by Lemma \ref{TechnicalLemma}, we have
\[
B_{R_1}^*(x,y) = B_{R_2}^*(x,y) = 0, 
\]
where 
\[
R_1 = \frac{r_1'}{r_0'}, \qquad 
R_2 =  \frac{r_0 r_1' - r_0' r_1}{r_0'}.
\]

By Lemma \ref{Bezout}, this has at most finitely many solutions unless the polynomials $B_{R_1}^*(X,Y)$ and $B_{R_2}^*(X,Y)$ have a common divisor of positive degree. Let us now assume the latter, for a contradiction. By Lemma \ref{CohenFieldTheory}, there exist $R(X), \tet(X), \phi(X) \in \bC(X)$  such that $\deg(R) \ge 2$ and
\begin{equation} \label{TwoEquations}
r_1'(X) = \tet(R(X)) r_0'(X),
\qquad r_0(X) r_1'(X) - r_0' (X) r_1(X) =  \phi (R(X)) r_0'(X),
\end{equation}
and furthermore
\[
R = P/Q, \qquad P(X), Q(X) \in \bC[X] \setminus \{ 0 \}, 
\qquad \deg(P) > \deg(Q),
\]
and $P,Q$ are coprime.

Assume for a contradiction that $\tet$ is constant. Then, by the quotient rule, we have
\[
\cH G_1' - G_1 \cH' = \lam (\cH G_0' - G_0 \cH')
\]
for some $\lam \in \bC$. More explicitly, we have
\[
2X(X-\bet/\gam) - (X^2 - \alp/\gam) = \lam ((X - \bet/\gam) G_0'(X) - G_0(X)).
\]
Equating $X^n$ coefficients yields $\lam = 0$, and then equating $X^2$ coefficients delivers a contradiction.  Therefore $\tet$ is non-constant.

Next, we substitute the first equation of \eqref{TwoEquations} into the second. As $r_0'(X) \ne 0$, this gives
\[
\tet(R(X)) r_0(X) - r_1(X) = \phi (R(X)).
\]
Differentiating yields
\[
r_1'(X) = \tet'(R(X)) R'(X) r_0(X) + \tet(R(X)) r_0'(X) - \phi'(R(X)) R'(X).
\]
Since $R', \tet' \ne 0$ and $r_1'(X) = \tet(R(X)) r_0'(X)$, we now have
\[
r_0(X) = \frac{\phi'(R(X))} {\tet'(R(X))}.
\]

We had observed that there is at most one value of $a \in \bC$ for which $\cG$ and $\cH$ are not coprime over $\bC$. Let us now suppose that $a \in \bC$ takes any other value. From the previous paragraph, we see that $r_0(X)$, and hence also $r_0(X) + a r_1(X)$, is a rational function of $R(X)$. Thus, for some coprime polynomials $g_0(X), g_1(X) \in \bC[X]$, we have
\[
\frac{\cG(X)}{\cH(X)} = \frac{g_0(R(X))}{g_1(R(X))}.
\]
Put
\[
L = \max \{ \deg (g_0), \deg(g_1) \} \ge 1,
\]
and observe that
\[
\frac{\cG(X)}{\cH(X)} = \frac{Q(X)^L g_0(R(X))}{Q(X)^L g_1(R(X))}.
\]

Suppose for a contradiction that the polynomials $Q(X)^L g_0(R(X))$ and $Q(X)^L g_1(R(X))$ are not coprime. Then some $x \in \bC$ is a common root. If $Q(x) \ne 0$ then $R(x)$ is a common root of $g_0$ and $g_1$, which is impossible because $g_0$ and $g_1$ are coprime. Therefore $Q(x) = 0$. Let $i \in \{ 0, 1\}$ be such that $L = \deg(g_i)$, and write
\[
g_i(X) = b_L X^L + b_{L-1} X^{L-1} + \cdots + b_0, \qquad b_0,\ldots,b_L \in \bC, \qquad b_L \ne 0.
\]
Then 
\[
0 = Q(x)^L g_i(P(x)/Q(x)) = b_L P(x)^L,
\]
so $P(x)=0$, contradicting the coprimality of $P$ and $Q$. Hence $Q(X)^L g_0(R(X))$ and $Q(X)^L g_1(R(X))$ are coprime. 

Now
\[
\cG(X) = \tau Q(X)^L g_0(R(X)), \qquad \cH(X) =  \tau Q(X)^L g_1(R(X)),
\]
for some $\tau \in \bC \setminus \{ 0\}$. We also know that $r_0(X)$ and $r_1(X)$ are rational functions of $R(X)$. Therefore
\[
G_0(X) = Q(X)^L g_2(R(X)), \qquad G_1(X) = Q(X)^L g_3(R(X)),
\]
for some rational functions $g_2, g_3$. 

Let $g_2 = P_2/Q_2$, where $P_2(X), Q_2(X) \in \bC[X]$ are coprime and $Q_2 \ne 0$, and let 
\[
L_2 = \deg(g_2).
\]
Write
\[
Q_2(X) = c_{s} X^{s} + c_{s-1} X^{s-1} + \cdots + c_0, \qquad c_s, \ldots, c_0 \in \bC, \qquad c_s \ne 0.
\]

Assume for a contradiction that $s > 0$. Arguing as before, the polynomial $Q_2(R(X)) Q(X)^{L_2}$ is coprime to $P_2(R(X)) Q(X)^{L_2}$. Therefore $Q_2(R(X)) Q(X)^{L_2}$ divides $Q(X)^L$ in $\bC[X]$. Whence $L \ge L_2 - s$, and
\[
c_s P(X)^s + c_{s-1} P(X)^{s-1} Q(X) + \cdots + c_0 Q(X)^s = Q_2(R(X)) Q(X)^s \mid Q(X)^{L - L_2 + s}.
\]
The polynomial $Q_2(R(X)) Q(X)^s$ has degree $s \cdot \deg(P) > 0$. Therefore
$L > L_2 - s$ and, for some $x \in \bC$, we have
\[
c_s P(x)^s + c_{s-1} P(x)^{s-1} Q(x) + \cdots + c_0 Q(x)^s = 0,
\]
so $Q(x)^{L-L_2+s} = 0$. Now $Q(x) = 0$, which implies that $c_sP(x)^s = 0$ and so $P(x) = 0$, this being impossible because $P$ and $Q$ are coprime. 

Hence $s = 0$. The upshot is that $g_2$ is a polynomial, and similarly $g_3$ is a polynomial. Recall that
\[
G_0(X) = Q(X)^L g_2(R(X)), \qquad G_1(X) = Q(X)^L g_3(R(X))
\]
and
\[
G_2(X) = \cH(x) =  Q(X)^L \cdot \tau g_1(R(X)), 
\]
where $\tau \in \bC \setminus \{ 0 \}$. As
\[
G_0(X) + aG_1(X) = Q(X)^L g_0(R(X)),
\]
we deduce that
\[
\deg(g_0) = \deg(g_2) > \deg(g_3),
\]
whereupon
\[
L = \max \{ \deg(g_1), \deg(g_2), \deg(g_3) \},
\]
and finally
\[
L \ge 2
\]
by the linear independence of $G_0, G_1, G_2$. This contradicts Lemma \ref{NotTotallyComposite}, that $G_0,G_1,G_2$ are not strongly totally composite.

We conclude that, for all but finitely many values of $a \in \bC$, the systems \eqref{FirstSystem} and \eqref{SecondSystem} each have no solutions $x,y \in \bC$. Thus, by Lemma \ref{NormalityCriterion}, for any $b \in \bC$ the polynomial $f$ is normal. This completes the second step, and with it the proof of the lemma.
\end{proof}

By Lemmas \ref{FullSymmetric}, \ref{DoublyTransitive}, and \ref{transposition}, we have $\Gal(f, \bC(a,b)) = S_n$. As $\Gal(f, \bQ(a,b))$ contains $\Gal(f, \bC(a,b))$, we conclude that
\[
\Gal(f, \bQ(a,b)) = S_n.
\]
Let $h$ be as in Lemma \ref{hdef}, and note that $\deg(h) \ll_n 1$. By Lemma \ref{ResolventProperty}, if $G_f = G$ then $h(a,b,y) = 0$ for some $y \in \bZ$. This has $O(H^{1+\eps+1/d})$ integer roots $(a,b,y)$ with $|a|,|b| \le H$, by \eqref{CoefficientBound} and Theorem \ref{LopsidedPila}. In summary, this case $\gam \ne 0$ contributes $O(H^{\eps + n - 2 + 3d^{-1/3} + d^{-1}})$ to $N_{G,n}$.

\subsubsection{Case 2: $\gam = 0$ and $\bet \ne 0$} Then
\begin{align*}
f(X) &= X^n + a_1 X^{n-1} + \cdots + a_{n-3} X^3
+ aX^2 - \bet^{-1}(\alp a + \del) X + c\\
&= G_0(X) + a G_1(X) + cG_2(X),
\end{align*}
where
\[
G_0(X) = X^n + a_1 X^{n-1} + \cdots + a_{n-3} X^3 - \: \frac{\del}{\bet}X,
\quad
G_1(X) = X^2 - \: \frac{\alp}{\bet}X, 
\quad
G_2(X) = 1.
\]
We may assume that $f$ is irreducible over $\bQ(a,c)$, and therefore also separable. 

\begin{lemma} \label{DoublyTransitive2} The permutation group $\Gal(f, \bC(a,c))$ is doubly-transitive.
\end{lemma}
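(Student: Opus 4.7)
The plan is to follow the approach of Lemma~\ref{DoublyTransitive}, exploiting the crucial simplification $G_2 \equiv 1$. First I would reduce the claim to showing that $f(X)/(X-x)$ is irreducible over $\bC(a,c,x) = \bC(a,x)$ for any root $x$ of $f$; the identity $c = -G_0(x) - aG_1(x)$, immediate from $f(x)=0$ and $G_2(x)=1$, places $c \in \bC(a,x)$ and justifies the equality of fields.

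Suppose for contradiction this irreducibility fails. Substituting the expression for $c$ into $f$ and dividing by $X-x$ gives
\[
\frac{f(X)}{X-x} = P(X,x) + a\bigl(X + x - \alp/\bet\bigr),
\]
where $P(X,x) := (G_0(X)-G_0(x))/(X-x) \in \bC[X,x]$ is monic of degree $n-1$ in $X$. By Gauss's lemma any non-trivial factorisation may be taken in $\bC[a,x][X]$, with both factors monic in $X$ of positive $X$-degree. As the polynomial is linear in $a$, one factor is forced to be $a$-free, and hence must divide the $a$-coefficient $X + x - \alp/\bet$. That polynomial is irreducible and linear in $X$, so after matching $X$-degrees and leading $X$-coefficients the $a$-free factor equals $X + x - \alp/\bet$; consequently $X + x - \alp/\bet$ also divides $P(X,x)$ in $\bC[X,x]$. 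Evaluating the identity $G_0(X) - G_0(x) = (X-x) P(X,x)$ at $X = \alp/\bet - x$ then gives $G_0(\alp/\bet - x) = G_0(x)$ in $\bC[x]$, which promotes to the polynomial identity
\[
G_0(\alp/\bet - X) = G_0(X) \in \bC[X].
\]

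The remaining step is a routine coefficient comparison. The $X^n$-coefficient gives $(-1)^n = 1$, so $n$ is even (for odd $n$ this already produces the contradiction). The $X^{n-1}$-coefficient then gives $-n(\alp/\bet) - a_1 = a_1$, forcing $\alp/\bet = -2a_1/n$; and after this substitution the $X^{n-3}$-coefficient collapses to
\[
-2a_3 = \binom{n}{3}(-2a_1/n)^3 + \binom{n-1}{2}(-2a_1/n)^2 a_1 + (n-2)(-2a_1/n)\, a_2,
\]
in direct contradiction to the non-degeneracy hypothesis \eqref{weird}. I expect the only subtlety to lie in the factorisation step, specifically confirming via $a$-degree considerations that a factor of the precise form $X + x - \alp/\bet$ is genuinely forced; once that is in place the coefficient comparison is mechanical, and the simplification $G_2 \equiv 1$ removes the need to carry factors of $G_2(x)$ through the argument as was required in Lemma~\ref{DoublyTransitive}.
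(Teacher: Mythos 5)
Your proof is correct and takes essentially the same approach as the paper. Both arguments substitute $c = -G_0(x) - aG_1(x)$ to make $f(X)/(X-x)$ a polynomial linear in $a$, then exploit that linearity to force the existence of a second root $\xi = \alp/\bet - x$ (independently of $a$), yielding the identity $G_0(\alp/\bet - X) = G_0(X)$ which is then refuted by the coefficient comparison with \eqref{weird}. Where the paper says tersely that linearity in $a$ forces a root $\xi \ne x$ with $G_i(\xi) = G_i(x)$ for $i = 0,1$, you spell out the underlying mechanism via Gauss's lemma and $a$-degree counting to show the $a$-free factor must equal $X + x - \alp/\bet$; this is a helpful expansion of the paper's step rather than a different route. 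Your observation that $\bC(a,c,x) = \bC(a,x)$ (since $G_2 \equiv 1$ makes $c$ a polynomial in $a,x$) is a small clean simplification not stated in the paper but implicit in it.
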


\begin{proof}
We imitate the proof of Lemma \ref{DoublyTransitive}. Assume for a contradiction that $\Gal(f, \bC(a,c))$ is not doubly-transitive. Then there exists a root $x \in \overline{\bC(a,b)}$ of $f$ such that $f(X)/(X-x)$ is reducible over $\bC[a,b,x]$. Now 
\[
cG_2(x) = -G_0(x) - aG_1(x),
\]
so
\[
G_2(x) f(X) = G_2(x)G_0(X) - G_0(x) G_2(X) + a(G_2(x) G_1(X) - G_1(x) G_2(X)).
\]
This is linear in $a$ and separable in $X$, so for $f(X)/(X-x)$ to be reducible there must exist $\xi \in \overline{\bC(x)} \setminus \{ x \}$ such that
\[
G_2(x) G_i (\xi) = G_2(\xi) G_i(x) \qquad (i=0,1).
\]

The equation with $i=1$, namely
\[
\xi^2 - \: \frac{\alp}{\bet} \xi = x^2 - \: \frac{\alp}{\bet}x,
\]
is quadratic in $\xi$ with one of the roots being $x$. Therefore $\xi$ equals the other root:
\[
\xi = \frac{\alp}{\bet} - x.
\]
Substituting this into the equation with $i=0$ yields
\[
G_0 \left( \frac{\alp}{\bet} - x \right) = G_0(x).
\]
As $x$ is transcendental over $\bC$, it cannot be the root of a non-zero polynomial over $\bC$, and whence
\begin{equation} \label{reflection}
G_0 \left( \frac{\alp}{\bet} \: - X\right) = G_0(X).
\end{equation}

Equating $X^n$ coefficients tells us that $n$ is even. Then, equating $X^{n-1}$ coefficients gives
\[
-n \alp/ \bet -a_1 = a_1,
\]
so $\alp/\bet = -2a_1/n$. Finally, equating $X^{n-3}$ coefficients yields
\begin{align*}
-2a_3 &= \binom{n}{3} (\alp/\bet)^3 + \binom{n-1}{2}(\alp/\bet)^2 a_1 + (n-2)(\alp/\bet)a_2 \\
&= \binom{n}{3} (-2a_1/n)^3 + \binom{n-1}{2}(-2a_1/n)^2 a_1 + (n-2)(-2a_1/n) a_2, 
\end{align*}
contradicting \eqref{weird} and completing the proof.
\end{proof}

\begin{lemma} \label{NotTotallyComposite2} The polynomials $G_0, G_1, G_2$ are not strongly totally composite.
\end{lemma}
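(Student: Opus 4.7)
The plan is to exploit the fact that $G_2 = 1$ is a constant, which dramatically constrains any hypothetical decomposition $G_i(X) = Q(X)^L P_i(R(X))$. Suppose for a contradiction that $G_0, G_1, G_2$ are strongly totally composite, with data $R = P/Q$, polynomials $P_0, P_1, P_2$, and $L \ge 2$.

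First I would observe that $Q^L \cdot P_2(R) = G_2 = 1$ forces both factors to be non-zero elements of $\bC$, since $\bC[X]$ is an integral domain. In particular $Q^L$ is a non-zero constant, and as $L \ge 2$, this yields $\deg(Q) = 0$. Rescaling, we may take $Q = 1$, so $R = P$ is a genuine polynomial of degree at least $2$. Next, from $G_1(X) = X^2 - (\alp/\bet)X = c \cdot P_1(R(X))$ with $c \in \bC^\times$, comparison of degrees yields $2 = \deg(P_1) \cdot \deg(R) \ge 2\deg(P_1)$, forcing $\deg(P_1) = 1$ and $\deg(R) = 2$. Inverting the linear relation $P_1(Y) = \lambda Y + \mu$ then gives $R(X) = \eta(X^2 - (\alp/\bet)X) + \nu$ for some $\eta \in \bC^\times$ and $\nu \in \bC$. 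Any such quadratic is symmetric about its vertex $X = \alp/(2\bet)$, so $R(\alp/\bet - X) = R(X)$.

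The key consequence is that $G_0(X) = c' \cdot P_0(R(X))$ is a polynomial expression in $R(X)$, and therefore inherits the reflection symmetry $G_0(\alp/\bet - X) = G_0(X)$. This is precisely the identity \eqref{reflection} encountered in the proof of Lemma \ref{DoublyTransitive2}; the argument given there shows that $n$ must be even, $\alp/\bet = -2a_1/n$, and that the equality explicitly forbidden by the non-degeneracy hypothesis \eqref{weird} must hold. This contradiction completes the proof.

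I do not foresee a serious obstacle here; the argument is structural and short, essentially reducing to the already-executed analysis of \eqref{reflection}. The only delicate point is the first step, where one must exploit $L \ge 2$ cleanly to force $Q$ to be constant, independently of the coprimality of $P$ and $Q$; beyond that, every step is forced by degree considerations and the shape of $G_1$.
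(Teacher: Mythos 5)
Your argument is essentially the paper's: force $\deg Q = 0$ from $G_2 = 1$, then use $G_1$ to pin down $\deg P_1 = 1$ and $\deg R = 2$, so that $R$ (and hence $G_0$) is symmetric under $X \mapsto \alp/\bet - X$; this gives \eqref{reflection}, and the contradiction with \eqref{weird} follows exactly as in the proof of Lemma \ref{DoublyTransitive2}.

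The one place that needs repair is the opening step, which you yourself flag as delicate. You factor $1 = Q^L \cdot P_2(R)$ and appeal to $\bC[X]$ being an integral domain, but $P_2(R)$ is a priori only a rational function, not a polynomial, so this is not yet a factorization of $1$ inside $\bC[X]$; as written the integral-domain argument does not apply. The correct move (and the paper's) is to write, with $s_2 = \deg P_2 \le L$,
\[
Q^L\, P_2(R) = Q^{L-s_2}\bigl(b_{s_2}P^{s_2} + b_{s_2-1}P^{s_2-1}Q + \cdots + b_0 Q^{s_2}\bigr),
\]
which is a product of two genuine polynomials equal to $1$, so both are nonzero constants. Since $\deg P > \deg Q$, the second factor has degree $s_2 \deg P$, which must therefore be $0$; as $\deg P \ge 2$ this forces $s_2 = 0$, and then the first factor $Q^L$ being constant gives $\deg Q = 0$. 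From there everything you wrote goes through unchanged, and the appeal to \eqref{reflection} and \eqref{weird} matches the paper's conclusion.
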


\begin{proof} Suppose for a contradiction that
\[
G_i(X) = Q(X)^L P_i(R(X)) \qquad (i=0,1,2), \qquad R=P/Q,
\]
where $P(X), Q(X) \in \bC[X] \setminus \{ 0 \}$ are coprime, 
\[
P_0(X), P_1(X), P_2(X) \in \bC[X], \qquad s_1 := \deg(P_1),
\qquad 
s_2 := \deg(P_2), 
\]
and 
\[
\deg(P) > \max \{ \deg(Q), 1 \}, \qquad L \ge  \max \{ s_1, s_2, 2 \}.
\]
Writing
\[
P_2(X) = b_{s_2} X^{s_2} + \cdots + b_0, \qquad b_0, \ldots, b_{s_2} \in \bC, \qquad b_{s_2} \ne 0
\]
and
\[
P_1(X) = c_{s_1} X^{s_1} + \cdots + c_0, \qquad c_0, \ldots, c_{s_1} \in \bC, \qquad c_{s_1} \ne 0
\]
we have
\[
1 = Q(X)^{L-{s_2}} (b_{s_2} P(X)^{s_2}+ \cdots + b_0 Q(X)^{s_2})
\]
and
\[
X(X - \alp/\bet) = Q(X)^{L-{s_1}} (c_{s_1} P(X)^{s_1} + \cdots + c_0 Q(X)^{s_1}).
\]
Therefore
\[
0 = (L-s_2) \deg(Q) + s_2 \cdot \deg(P)
\]
and
\[
2 = (L-s_1) \deg(Q) + s_1 \cdot \deg(P).
\]
The only possibility is
\[
s_2 = 0, \qquad \deg(Q) = 0, \qquad s_1 = 1, \qquad \deg(P) = 2.
\]

Now, for some $\lam_0 \in \bC$, we have
\[
X(X-\alp/\bet) = \lam_0^{L-1} (c_1 P(X) + c_0 \lam_0).
\]
As $\lam_0, c_1 \ne 0$, this enables us to write $G_0$ as a polynomial in $X(X-\alp/\bet)$. In particular, we have \eqref{reflection}, and this leads to a contradiction in the same way as in Case 1.
\end{proof}

\begin{lemma} \label{transposition2} The permutation group $\mathrm{Gal}(f,\bC(a,c))$ contains a transposition.
\end{lemma}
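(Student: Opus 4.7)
The plan is to mirror the proof of Lemma~\ref{transposition}. By \cite[Corollary 6]{Coh1980} it suffices to exhibit $a, c \in \bC$ for which $f$ has factor type $(1,\ldots,1,2)$. Since $c$ enters $f$ only through the constant term, for every $a$ the polynomial $r'(X) = G_0'(X) + aG_1'(X)$ has degree $n-1 \ge 2$ and therefore a root $x$; setting $c = -G_0(x) - aG_1(x)$ yields a double root at $x$. The bulk of the work is to show that, for all but finitely many $a$, the polynomial $f$ is normal for every $c$, which I would do by applying Lemma~\ref{NormalityCriterion} with $\cG = G_0 + aG_1$ and $\cH = G_2 = 1$ (so $r = \cG$).

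For \eqref{FirstSystem}, $r''(x) = G_0''(x) + 2a = 0$ together with $r'(x) = 0$ collapses to
\[
2G_0'(x) - G_0''(x)(2x - \alp/\bet) = 0,
\]
a polynomial equation in $x$ with leading coefficient $-2n(n-2) \ne 0$ for $n \ge 3$, hence at most $n-1$ solutions and at most $n-1$ corresponding values of $a = -G_0''(x)/2$. For \eqref{SecondSystem} I would transcribe the rank analysis from the proof of Lemma~\ref{transposition}, working with $r_0 = G_0$ and $r_1 = G_1$. Rank zero forces $x = y$ because $r_1'(X) = 2X - \alp/\bet$ has a unique root; the rank-one subcase $r_0'(x)r_0'(y) = 0$ yields finitely many $(x, y)$ via degree arguments on the remaining minors; in the generic rank-one subcase Lemma~\ref{TechnicalLemma} produces the vanishing $B_{R_1}^*(x, y) = B_{R_2}^*(x, y) = 0$ with $R_1 = r_1'/r_0'$ and $R_2 = (r_0 r_1' - r_0' r_1)/r_0'$, and Lemma~\ref{Bezout} gives only finitely many solutions unless these curves share a positive-degree divisor.

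If they do, Lemma~\ref{CohenFieldTheory} supplies $R = P/Q$ with $\deg R \ge 2$, $\deg P > \deg Q$, $\gcd(P, Q) = 1$, and rational $\tet, \phi$ satisfying $r_1' = \tet(R) r_0'$ and $r_0 r_1' - r_0' r_1 = \phi(R) r_0'$. Constancy of $\tet$ is ruled out by the degree mismatch $\deg r_1' = 1 < n - 1 = \deg r_0'$; non-constancy gives $r_0 = \phi'(R)/\tet'(R)$, and the coprimality bookkeeping of the original proof transports over to yield $\cG = \tau Q^L g_0(R)$, $\cH = \tau Q^L g_1(R)$ with $g_0, g_1$ coprime polynomials, and $G_0 = Q^L g_2(R)$, $G_1 = Q^L g_3(R)$ with $g_2, g_3$ polynomials. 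The key departure is that $\cH = 1$ now forces $Q$ and $g_1$ to be non-zero constants, so $R$ is a polynomial of degree $\ge 2$; the divisibility $\deg R \mid \deg G_1 = 2$ forces $\deg R = 2$, hence $\deg g_3 = 1$ and $\deg g_2 = n/2$. If $n$ is odd this contradicts $\deg R \mid \deg G_0 = n$; if $n$ is even (and hence $\ge 4$) then $L = n/2 \ge 2$, so $G_0, G_1, G_2$ are strongly totally composite, contradicting Lemma~\ref{NotTotallyComposite2}.

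The main obstacle is the polynomial-structure extraction producing $g_2, g_3$ and the compositeness data; this is carried over almost verbatim from the proof of Lemma~\ref{transposition}. The new subtlety is that, with $\cH = 1$, one loses the $\deg g_1 \ge 1$ contribution that secured $L \ge 2$ in Case 1 via linear independence, and instead $L \ge 2$ has to be extracted from the forced equality $\deg R = 2$ and the integrality of $\deg g_2 = n/2$.
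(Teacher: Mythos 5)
Your proof is correct and follows essentially the same approach as the paper, which disposes of this lemma in two sentences by deferring to the argument of Case 1; you have simply filled in the details of that deferral, and they check out. A few minor remarks. Your leading-coefficient computation $-2n(n-2)$ for the degree-$(n-1)$ polynomial arising from \eqref{FirstSystem} is correct. In characterising what Case~1 needed: there, $L \ge 2$ is secured by the linear independence of $G_0, G_1, G_2$ over $\bC$ (a span argument, not specifically $\deg g_1 \ge 1$), and that same linear-independence route would also work in Case~2; your alternative route via $\deg g_2 = n/2$ is equally valid and a bit more direct here. Your observation that odd $n$ is killed immediately by the divisibility $\deg R \mid \deg G_0$ is a small simplification; in the even case, once you have the strongly-totally-composite data with $L = n/2 \ge 2$, invoking Lemma~\ref{NotTotallyComposite2} closes the argument just as the paper intends. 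Note that in the range where this lemma is applied ($n = 9$ or $n \ge 11$), the needed inequalities $n - 1 \ge 2$ and $n/2 \ge 2$ are comfortably satisfied.
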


\begin{proof}
Following the proof of Lemma \ref{transposition}, the first step is to show that if $a \in \bC$ then there exist $x,c \in \bC$ such that 
\[
G_0(x) + aG_1(x) + c = 0, \qquad
G_0'(x) + aG_1'(x) = 0.
\]
The second equation is the vanishing of a non-zero polynomial, so it has a solution $x$, and then the first equation has a solution $c$. The rest of the proof is essentially the same as in Case 1.
\end{proof}

By Lemmas \ref{FullSymmetric}, \ref{DoublyTransitive2}, and \ref{transposition2}, we have $\Gal(f, \bC(a,c)) = S_n$. Arguing as in the previous case, we find that this case, too, contributes $O(H^{\eps + n - 2 + 3d^{-1/3} + d^{-1}})$ to $N_{G,n}$.

\subsubsection{Case 3: $\bet = \gam = 0$ and $\alp \ne 0$} Then
\begin{align*}
f(X) &= X^n + a_1 X^{n-1} + \cdots + a_{n-3} X^3
- \alp^{-1} \del X^2 + bX + c\\
&= G_0(X) + b G_1(X) + cG_2(X),
\end{align*}
where
\[
G_0(X) = X^n + a_1 X^{n-1} + \cdots + a_{n-3} X^3 - \alp^{-1} \del X^2,
\quad
G_1(X) = X, 
\quad
G_2(X) = 1.
\]
Similarly to the previous cases, or directly from \cite[Theorem  1]{Coh1980}, we have $\Gal(f, \bC(b,c)) = S_n$. Arguing as in the previous cases, we find that this case also contributes $O(H^{\eps + n - 2 + 3d^{-1/3} + d^{-1}})$ to $N_{G,n}$.

\bigskip

We conclude that non-degenerate tuples contribute $O(H^{\eps + n - 3/2 + 3d^{-1/3}})$ to $N_{G,n}$, the dominant contribution coming from \eqref{DominantContribution}.

\subsection{Degenerate tuples} \label{DegenerateTuples}

We begin by counting degenerate tuples. There are $O(H^{n-4})$ integer vectors $(a_1,\ldots,a_{n-3}) \in [-H,H]^{n-3}$ \textbf{not} satisfying \eqref{weird}.

For the degenerate tuples with \eqref{weird}, the idea is to show that $\cY_{a_1, \ldots, a_{n-3}}$ is absolutely irreducible for generic $a_1, \ldots, a_{n-3}$. Let
\[
\cD(a_1,\ldots,a_n) \in \bZ[a_1,\ldots,a_n]
\]
be the discriminant of $\Phi(y)$. As $\Phi$ has total degree $O_n(1)$, so too does $\cD$. Further, we know from \cite[Lemma 7]{Die2012} that $\cD$ is not the zero polynomial. Whence, for all but $O(H^{n-4})$ integer vectors $(a_1,\ldots,a_{n-3}) \in [-H,H]^{n-3}$, the polynomial $g(a,b,c,y) = \Phi(y;a_1,\ldots,a_{n-3},a,b,c)$ is separable in $y$.

Let $a_1,\ldots,a_{n-3}$ be integers such that $g(a,b,c,y)$ is separable in $y$, and recall \eqref{fdefabc}. By \cite[Theorem 1]{Coh1980}, applied to the field $F = \overline{\bQ}(a)$, we have 
\[
\Gal(f, \overline{\bQ}(a,b,c)) = S_n.
\]
Suppose for a contradiction that
\[
g(a,b,c,y) = g_1(y;a,b,c) g_2(y;a,b,c) \in \overline{\bQ}(a,b,c)[y],
\]
for some non-constant polynomials $g_1, g_2$. Let $r_{\sig_i}$ be a root of $g_i$ ($i=1,2$), where for $\sig \in S_n$ we write
\[
r_\sig = \sum_{\tau \in G} \prod_{i \le n} \alp_{\sig \tau (i)}^i,
\]
where $\alp_1, \ldots, \alp_n$ are the roots of $f$ in $\overline{\bQ(a,b,c)}$. Then for $\kap = \sig_2 \sig_1^{-1} \in \Gal(f, \overline{\bQ}(a,b,c))$ we have $\kap(r_{\sig_1}) = r_{\sig_2}$. As $g_1$ has coefficients in $\overline{\bQ}(a,b,c)$, we now see that $g_1$ and $g_2$ have a common root $\kap(r_{\sig_1}) = r_{\sig_2}$, which is impossible because $g$ is separable. 

We conclude that there are $O(H^{n-4})$ integer tuples 
\[
(a_1, \ldots, a_{n-3}) \in [-H,H]^{n-3}
\]
such that $\cY_{a_1, \ldots, a_{n-3}}$ is reducible over $\overline{\bQ}$. The upshot is that there are $O(H^{n-4})$ degenerate tuples $(a_1,\ldots,a_{n-3}) \in [-H,H]^{n-3}$ in total. Now let us fix such a tuple.

Write
\[
\fF(a,b,c,X)
= X^n + a_1X^{n-1} + \cdots + a_{n-3}X^3 + aX^2 + bX + c.
\]
By \cite[Lemma 2]{Die2012}, there are at most $O(H)$ values of $a,b \in \bZ \cap [-H,H]$ for which the polynomial $\fF(a,b,c,X) \in \bQ(c)[X]$ has non-$S_n$ Galois group, where the implied constant is uniform in $a_1,\ldots,a_{n-3}$. The total contribution from these $O(H)$ special choices of $a,b$ is $O(H^{n-2})$, since there are $O(H)$ possibilities for $c$.

For the other $O(H^2)$ specialisations of $a,b$, the polynomial
\[
\cF(c,X) := \fF(a,b,c,X) \in \bQ(c)[X]
\]
is separable with Galois group $S_n$. By Lemma \ref{GeneralResolvent}, the polynomial 
\[
h(c,y) := \Phi_{\cF,G}(c,y) \in \bZ[c,y]
\]
is irreducible, so by Theorem \ref{LopsidedBP} there are $O(H^{\eps+1/d})$ pairs $(c,y) \in \bZ^2$ such that 
\[
h(c,y) =0,
\qquad
|c| \le H, \qquad y \ll_n H^{O_n(1)}.
\]
We conclude that there are $O(H^{\eps+n-2+1/d})$ tuples $(a_1,\ldots,a_n) \in (\bZ \cap [-H,H])^n$, with $(a_1,\ldots,a_{n-3})$ degenerate, such that $G_f=G$.

\bigskip

In total there are $O(H^{\eps + n- 3/2+ 3d^{-1/3}})$ irreducible polynomials \eqref{fdefGeneral} with $\max_i |a_i| \le H$ and Galois group $G$.

\section{Even permutation groups}
\label{EvenGroups}

In this section, we establish Theorem \ref{EvenGroupThm}.
The inequality $d \ge 6$ follows from Theorem \ref{HigherThm} when $n \ge 11$, and from the classification of transitive groups of low degree \cite{BM1983} when $3 \le n \le 10$. Let $(a_1,\ldots,a_{n-2}) \in (\bZ \cap [-H,H])^{n-2}$, and let
\[
g(a,b,y) =
\Phi(y; a_1,\ldots,a_{n-2},a,b).
\]

First suppose $(a_1,\ldots,a_{n-2})$ is non-degenerate, meaning that $g$ is absolutely irreducible. Denote by $\cY$ the affine surface cut out by the vanishing of $g$. By Theorem \ref{BrowningThm}, there exist $g_1,\ldots,g_J \in \bZ[a,b,y]$ with $J \ll H^{\eps+1/\sqrt d}$, and a finite set $\cZ \subset \cY$, such that
\begin{enumerate}
\item Each $g_j$ is coprime to $g$ and has degree $O(1)$;
\item $|\cZ| \ll H^{\eps+2/\sqrt d}$;
\item If $(a,b,y) \in \cY \cap \bZ^3 \setminus \cZ$ and
\[
|a|,|b| \le H, \qquad y \ll_n H^{O_n(1)}
\]
then 
\begin{equation} \label{curve2}
g(a,b,y) = g_j(a,b,y) = 0
\end{equation}
for some $j$.
\end{enumerate}
The contribution to $N_{G,n}$ from $(a,b,y) \in \cZ$ is $O(H^{\eps+n-2+2/\sqrt d}) \ll H^{\eps+n-3/2+1/\sqrt d}$, so in the non-degenerate case it remains to consider \eqref{curve2} for $j$ fixed.

If $\deg_y(g_j) = 0$ then let $F(a,b) = g_j(a,b)$, and otherwise let $F(a,b)$ be the resultant of $g$ and $g_j$ in the variable $y$. Then $F(a,b) = 0$ whenever we have \eqref{curve2}, whereupon $\cF(a,b) = 0$ for some irreducible divisor $\cF(a,b) \in \bQ[a,b]$ of $F$. If $\cF$ is non-linear, then Theorem \ref{LopsidedBP} yields
\[
\# \{ (a,b) \in (\bZ \cap [-H,H])^2: \cF(a,b) = 0 \} \ll H^{\eps+1/2},
\]
and the contribution to $N_{G,n}$ from this case is $O(H^{\eps + n-2 + 1/\sqrt d + 1/2}) = O(H^{\eps+n-3/2+1/\sqrt d})$. In the non-degenerate case, it remains to treat the scenario in which $\cF$ is linear. 

If the $a$ coefficient of $\cF(a,b)$ is non-zero, then we have $a = c_1 b + c_2$ for some $c_1, c_2 \in \bQ$. As $G < A_n$, we have
\[
P(b,z) := z^2 - \Del(a_1,\ldots,a_{n-2}, c_1 b + c_2, b) = 0
\]
for some $z \in \bN$, where $\Del(a_1,\ldots,a_n)$ denotes the discriminant of $X^n + a_1 X^{n-1} + \cdots + a_n$. The polynomial $P$ is irreducible, by \cite[Lemma 5]{Die2013}, see \cite[\S 6]{DOS} for why we assume \eqref{erratum}. Let $C>0$ be large, and note that $\Del$ has total degree $2n-2$, so if 
$|a_1|,\ldots,|a_n| \le H$ then $|\Del(a_1,\ldots,a_n)| \le CH^{2n-2}$. Now Theorem \ref{LopsidedBP} yields
\[
\# \{ (b,z) \in \bZ^2: |b| \le H, |z| \le CH^{n-1}, P(b,z) = 0 \} \ll H^{\eps+1/2},
\]
and the contribution to $N_{G,n}$ from this case is $O(H^{\eps+n-3/2+1/\sqrt d})$.

Otherwise $\cF(a,b) = \lam(b-\mu)$, for some $\mu \in \bQ$ and some $\lam \in \bQ \setminus \{0 \}$, and
\[
Q(a,z) := z^2 - \Del(a_1,\ldots,a_{n-2},a,\mu) = 0
\]
for some $z \in \bN$. The polynomial $Q$ is irreducible, by \cite[Lemma 6]{Die2013}, see \cite[\S 6]{DOS} for why we assume \eqref{erratum}. Now Theorem \ref{LopsidedBP} yields
\[
\# \{ (a,z) \in \bZ^2: |a| \le H, |z| \le CH^{n-1}, Q(a,z) = 0 \} \ll H^{\eps+1/2},
\]
and the contribution to $N_{G,n}$ from this case is $O(H^{\eps+n-3/2+1/\sqrt d})$.

\bigskip

For degenerate tuples, we can follow \S \ref{DegenerateTuples} with minimal changes. We find that their contribution to $N_{G,n}$ is
$O(H^{\eps+n-2+1/d})$.

We have considered all cases, completing the proof of Theorem \ref{EvenGroupThm}.

\providecommand{\bysame}{\leavevmode\hbox to3em{\hrulefill}\thinspace}

\end{document}